\pgfplotsset{compat=newest}
\date{}
\newtheorem{theorem}{Theorem}
\newtheorem{lemma}[theorem]{Lemma}
\newtheorem{prop}[theorem]{Proposition}
\newtheorem{remark}[theorem]{Remark}
\theoremstyle{definition} 
\newcommand{\<}{\langle{}}
\renewcommand{\>}{\rangle}
\newcommand{\ip}[2]{\llangle#1\hspace*{.5mm},#2\rrangle}
\newcommand{\dual}[2]{\<#1\hspace*{.5mm},#2\>}
\newcommand{\vdual}[2]{(#1\hspace*{.5mm},#2)}
\newcommand{\norm}[2]{\|#1\|_{#2}}
\newcommand{\diam}{\mathrm{diam}}
\newcommand{\wilde}{\widetilde}
\newcommand{\wat}{\widehat}
\def\Grad{\boldsymbol{\varepsilon}}
\def\pwGrad{\boldsymbol{\varepsilon}_\cT}
\def\Div{{\rm\bf div\,}}
\def\pwDiv{{\rm\bf div}_\cT}
\def\grad{\nabla}
\def\pwgrad{\nabla_\cT}
\def\MM{\mathbf{M}}
\def\QQ{\mathbf{Q}}
\def\TTheta{\mathbf{\Theta}}
\def\bq{\boldsymbol{q}}
\newcommand{\bL}{\ensuremath{\mathbf{L}}}
\newcommand{\LL}{\ensuremath{\mathbb{L}}}
\newcommand{\PP}{\ensuremath{\mathbb{P}}}
\def\tQ{\wat{\boldsymbol{q}}}
\def\tu{\wat{\boldsymbol{u}}}
\def\tv{\wat{\boldsymbol{v}}}
\newcommand{\EE}{\ensuremath{\mathbb{E}}}
\newcommand{\UU}{\ensuremath{\mathcal{U}}}
\newcommand{\VV}{\ensuremath{\mathcal{V}}}
\newcommand{\bP}{\ensuremath{\mathbf{P}}}
\newcommand{\br}{\ensuremath{\mathbf{r}}}
\newcommand{\bj}{\ensuremath{\mathbf{j}}}
\newcommand{\bu}{\ensuremath{\mathbf{u}}}
\newcommand{\bv}{\boldsymbol{v}}
\newcommand{\vv}{\ensuremath{\mathbf{v}}}
\newcommand{\uu}{\ensuremath{\mathbf{u}}}
\newcommand{\bw}{\ensuremath{\mathbf{w}}}
\newcommand{\bphi}{\ensuremath{\boldsymbol{\phi}}}
\newcommand{\PPhi}{\ensuremath{\boldsymbol{\Phi}}}
\newcommand{\bvarphi}{\ensuremath{\boldsymbol{\varphi}}}
\newcommand{\deltaz}{\delta\!z}
\newcommand{\Deltaq}{\boldsymbol{\delta}\!\QQ}
\newcommand{\bdeltaq}{\boldsymbol{\delta}\!\bq}
\newcommand{\bdeltau}{\boldsymbol{\delta}\!\bu}
\newcommand{\traceDD}[1]{\mathrm{tr}_{#1}^{\mathrm{dDiv}}} 
\newcommand{\traceGG}[1]{\mathrm{tr}_{#1}^{\mathrm{Ggrad}}} 
\newcommand{\bH}{\ensuremath{\mathbf{H}}}
\newcommand{\DDs}{\ensuremath{\mathbb{D}^s}}
\newcommand{\bD}{\ensuremath{\boldsymbol{\mathcal{D}}}}
\newcommand{\cD}{\ensuremath{\mathcal{D}}}
\newcommand{\trggrad}[1]{{\mathrm{Ggrad},#1}}
\newcommand{\trddiv}[1]{{\mathrm{dDiv},#1}}
\def\tr{\mathrm{tr}}
\def\div{{\rm div\,}}
\def\divwo{{\rm div}}
\def\pwdiv{ {\rm div}_\cT\,}
\newcommand{\jump}[1]{[#1]}
\newcommand{\jjump}[1]{\lsem #1\rsem}
\newcommand{\ttt}{{\rm T}}
\newcommand{\di}{d}
\newcommand{\R}{\ensuremath{\mathbb{R}}}
\newcommand{\N}{\ensuremath{\mathbb{N}}}
\newcommand{\cH}{\ensuremath{\mathcal{H}}}
\newcommand{\nn}{\ensuremath{\mathbf{n}}}
\newcommand{\kkappa}{\ensuremath{\boldsymbol{\kappa}}}
\newcommand{\cC}{\ensuremath{\mathcal{C}}}
\newcommand{\cCinv}{\ensuremath{\mathcal{C}^{-1}}}
\newcommand{\cT}{\ensuremath{\mathcal{T}}}
\newcommand{\cS}{\ensuremath{\mathcal{S}}}
\newcommand{\bt}{\ensuremath{\mathbf{t}}}
\newcommand{\OO}{\ensuremath{\mathcal{O}}}
\newcommand{\cE}{\ensuremath{\mathcal{E}}}
\newcommand{\cN}{\ensuremath{\mathcal{N}}}
\title{An ultraweak formulation of the Kirchhoff--Love plate bending model and DPG approximation
\thanks{Supported by CONICYT through FONDECYT projects 1150056, 11170050, The Magnus Ehrnrooth foundation,
        and by Oulun rakennustekniikan s\"a\"ati\"o.}}
\author{
Thomas~F\"uhrer$^\dagger$
\and
Norbert Heuer\thanks{
Facultad de Matem\'aticas, Pontificia Universidad Cat\'olica de Chile,
Avenida Vicu\~na Mackenna 4860, Santiago, Chile,
email: {\tt \{tofuhrer,nheuer\}@mat.uc.cl}}
\and
Antti H. Niemi\thanks{
Structures and Construction Technology Research Unit, Faculty of Technology, University of Oulu,
Erkki Koiso-Kanttilan katu 5, Linnanmaa, 90570 Oulu, Finland,
email: {\tt antti.niemi@oulu.fi}}}
\begin{document}
\maketitle
\begin{abstract}
We develop and analyze an ultraweak variational formulation for a variant
of the Kirchhoff--Love plate bending model.
Based on this formulation, we introduce a discretization of the discontinuous
Petrov--Galerkin type with optimal test functions (DPG).
We prove well-posedness of the ultraweak formulation and quasi-optimal convergence
of the DPG scheme.

The variational formulation and its analysis require tools that control traces and jumps
in $H^2$ (standard Sobolev space of scalar functions) and $H(\div\Div\!)$ (symmetric tensor functions
with $L_2$-components whose twice iterated divergence is in $L_2$), and their dualities.
These tools are developed in two and three spatial dimensions.
One specific result concerns localized traces in a dense subspace of $H(\div\Div\!)$.
They are essential to construct basis functions for an approximation of $H(\div\Div\!)$.

To illustrate the theory we construct basis functions of the lowest order and perform
numerical experiments for a smooth and a singular model solution.
They confirm the expected convergence behavior of the DPG method
both for uniform and adaptively refined meshes.

\bigskip
\noindent
{\em Key words}: Kirchhoff--Love model, plate bending, biharmonic problem, fourth-order elliptic PDE,
discontinuous Petrov--Galerkin method, optimal test functions

\noindent
{\em AMS Subject Classification}:
74S05, 
74K20, 
35J35, 
65N30, 
35J67  
\end{abstract}

\section{Introduction}

We develop an ultraweak variational formulation for a bending-moment variant of the Kirchhoff--Love
plate model, and present a discontinuous Petrov--Galerkin method with optimal test functions
(DPG method) that is based on this formulation. We prove well-posedness of the continuous
formulation and quasi-optimal convergence of the discrete scheme.
At the heart of the analysis is the space $H(\div\Div\!,\Omega)$ and its traces and jumps.
This space consists of symmetric tensors with $L_2(\Omega)$-components whose
twice iterated divergence is in $L_2(\Omega)$
(the notation $\Div$ indicates the divergence operator that acts on the rows of tensors).

The Kirchhoff--Love model was introduced by Kirchhoff \cite{Kirchhoff_50_UGB}
in a form that is generally accepted today.
Kirchhoff also applied the model to determine the free vibration frequencies and modes of circular plates.
A historical account of the development of the model is incorporated in \cite{Love_88_SFV}
where Love uses Kirchhoff's approach to study vibrations of initially curved shells.
Nowadays, the model is widely used in structural engineering,
e.g.,~to dimension reinforced concrete slabs under static loads \cite{HausslerCombe_15_CMR}
and to control disturbing vibrations of wooden floors and other lightweight plane structures.

Perhaps the most well-known mathematical representation of the Kirchhoff--Love model for linearly elastic
and isotropic material is given by the biharmonic equation
\begin{equation*} 
   D\Delta^2 u = f,
\end{equation*}
where $u:\;\Omega\to\R$ is the deflection of the plate mid-surface $\Omega\subset\R^2$,
$\Delta$ is the Laplace operator and $f:\;\Omega\to\R$ and $D>0$ represent the external loading
and bending rigidity of the plate, respectively.  

It is evident that application of the model to complex geometries requires employment of numerical methods
such as the finite element method.
The literature on the numerical analysis of plate bending problems is vast due to the aforementioned
practical relevance of the problems and respectable age of the structural models.
It is not feasible to perform a thorough literature review here but two points that motivate
our work can be made.
First, conventional methods based on the variational principle of virtual displacements produce as
direct output only the deflection values. These, albeit needed values, are not sufficient for
structural design purposes where stresses and their resultants are of utmost importance.
Second, verification of numerical accuracy of finite element algorithms is at the hearth
of simulation governance, see~\cite{SzaboA_12_SGT}.
This is a serious challenge in practical plate-bending problems where both the geometry and
applied loading can be very irregular so that many of the contemporary developments in
the finite element modeling of plate problems are devoted to a posteriori error estimation
and adaptivity, see, e.g.,~\cite{Carstensen_02_RBP,BeiraodaVeigaNS_10_PEE,HansboL_11_PEE}.

We develop the theoretical framework for a DPG discretization to address the above challenges
and, perhaps more significant, to set a theoretical basis to develop and analyze DPG schemes
for other structural models like the singularly perturbed Reissner--Mindlin plate model
and different shell models.
Our analysis includes the case of singular problems on non-convex plates in contrast to
many publications that assume convexity or smooth boundaries. In this context,
we mention the mixed formulation from Amara \emph{et al.} \cite{AmaraCPC_02_BMM} who
specifically use the space $H(\div\Div\!,\Omega)$ (without symmetry), thus allowing
for singularities. Their numerical scheme is based on a decomposition of
$H(\div\Div\!,\Omega)$ resulting in a mixed formulation that can be discretized by
standard finite elements. In \cite{Gallistl_17_SSP}, Gallistl proposes a similar splitting
approach for polyharmonic problems with corresponding finite element scheme.

The DPG framework has been founded by Demkowicz and Gopalakrishnan in \cite{DemkowiczG_11_CDP}.
It is very flexible and can be used with various variational formulations.
A posteriori error estimation is also built-in, see \cite{DemkowiczGN_12_CDP}.
DPG schemes have been applied previously to structural engineering
problems in \cite{NiemiBD_11_DPG,CaloCN_14_ADP}
and to more general problems of elasticity in \cite{BramwellDGQ_12_Lhp,KeithFD_17_DMA}.
The most closely related investigation to the present work is probably \cite{CaloCN_14_ADP}.
That investigation showed that an ultraweak variational formulation of the Reissner--Mindlin
plate bending model is well posed and that the associated discretization is convergent.
Rather accurate numerical results were observed despite the fact that the theoretically
obtained stability constant is very weak and depends on the slenderness of the plate.
In particular, the question of well-posedness of the ultraweak variational formulation of the
asymptotic Kirchhoff--Love model was left open.

Essential motivation for the use of DPG schemes is their possible robustness for singularly perturbed
problems. The intrinsic energy norm can bound in a robust way approximation errors in the sense
that quasi-optimal error estimates (by the energy norm, which is accessible) are uniform with
respect to perturbation parameters. To achieve this robustness in appropriate (selected) norms,
it is paramount to have an appropriate variational formulation, and proving robustness is usually non-trivial.
For an analysis of second-order elliptic problems with convection-dominated diffusion (``confusion'')
and reaction-dominated diffusion (``refusion'') we refer to
\cite{DemkowiczH_13_RDM,ChanHBTD_14_RDM,BroersenS_14_RPG,BroersenS_15_PGD}
and \cite{HeuerK_17_RDM}, respectively.
The DPG setting for refusion from \cite{HeuerK_17_RDM} has been extended
to transmission problems and the coupling with boundary elements \cite{FuehrerH_17_RCD},
and to Signorini-type contact problems \cite{FuehrerHS_DMS}.

Whereas we do not consider a singularly perturbed problem in this paper, the development
of a DPG scheme for the Kirchhoff--Love model is relevant in its own right as discussed before, and will
be essential to deal with other models of plate problems. Since we expect our technical
tools to be useful also for fourth-order problems in three dimensions, they are developed
for both two and three space dimensions (they can be generalized to any space dimension).
Discretizations of fourth-order problems usually avoid $H^2$-bilinear forms to employ
simpler than $H^2$-conforming basis functions. In this respect, our choice of ultraweak
variational formulation has the advantage that field variables are only in $L_2$-spaces
whereas appearing trace variables (traces of $H^2(\Omega)$ and $H(\div\Div\!,\Omega)$)
are relatively straightforward to discretize.

Let us discuss the structure of our work. In the next section we introduce the model problem
of a certain bending-moment formulation for the Kirchhoff--Love model.
For simplicity we assume fully clamped plates but this is not essential as our formulation
gives access to all kinds of boundary conditions. In that section, we also start developing a
variational formulation. Since DPG schemes use product
spaces\footnote{Often they are referred to as ``broken'' spaces. We prefer to call
them product spaces since important Sobolev spaces, e.g., of negative order or order $1/2$,
cannot be localized but have to be defined as product spaces from the start, cf.~\cite{HeuerK_17_DPG}.}
with respect to subdivisions of $\Omega$ into elements, trace operations in the underlying Sobolev spaces
appear naturally. For fourth-order problems this is a non-trivial issue. Therefore,
in order to define a well-posed variational formulation in product spaces we need to develop
trace and jump operations, in our case in $H^2(\Omega)$ and $H(\div\Div\!,\Omega)$.
This is subject of Section~\ref{sec_traces_jumps}, whose contents is discussed in more detail below.
Eventually, in Section~\ref{sec_uw}, we are able to define our ultraweak variational
formulation and state its well-posedness (Theorem~\ref{thm_stab}). We then briefly
define the DPG scheme and state its quasi-optimal convergence (Theorem~\ref{thm_DPG}).
Proofs of Theorems~\ref{thm_stab} and~\ref{thm_DPG} are given in Section~\ref{sec_adj}.
We do not dwell much on the discussion of DPG schemes and their analysis. It is known that
an analysis of the underlying adjoint problem gives access to the well-posedness of the
variational formulation and quasi-optimal convergence of the DPG method (references have
been given above). Though we do stress the fact that our analysis goes beyond standard
techniques. Rather than splitting the adjoint problem into a homogeneous one in product spaces
and an inhomogeneous one in global (``unbroken'' or non-product) spaces
(like, e.g., in \cite{DemkowiczG_11_ADM,DemkowiczH_13_RDM,HeuerK_17_RDM})
or deducing stability of the adjoint problem in product spaces from the one of the global form
\cite{CarstensenDG_16_BSF}, we consider the full adjoint problem as a whole.
Section~\ref{sec_adj} starts with defining the adjoint problem. Its well-posedness is
proved in \S\ref{sec_adj_well}.  Key idea is to describe the primal unknown of the
adjoint problem as the solution to a saddle point problem without Lagrange multiplier.
Specifically, the primal unknown stays in the original product space and test functions
are considered in the corresponding global space. Of course, this problem
could be reformulated as a traditional saddle point problem. However, our technique
is applicable to adjoint problems with data that require
continuity,\footnote{Here we only note that such restrictions appear when considering first-order
formulations of plate bending models.}
that is, leaving the $L_2$ setting of ultraweak formulations.
In this sense, our new technique of analyzing the adjoint problem is fundamental.
Extensions to other problems will be subject of future research.

Let us note that there is a recent abstract framework by Demkowicz \emph{et al.}
\cite{DemkowiczGNS_17_SDM}. Under specific assumptions it yields the well-posedness
of $L_2$-ultraweak formulations in product spaces without explicitly analyzing trace spaces.
In \cite{GopalakrishnanS_SDM}, Gopalakrishnan and Sep\'ulveda applied this setting to
acoustic wave problems. In both references, an essential density assumption
is only proved for simple geometries. Furthermore, trace variables are discretized
via their domain counterparts whereas we only discretize the traces.
It is also unknown whether the new framework gives robust control
of variables in the case of singularly perturbed problems.
In \cite{ErnestiW_STD,Ernesti_thesis}, Ernesti and Wieners presented a simplified
DPG analysis based on the framework from \cite{DemkowiczGNS_17_SDM}.
They use the density results for simple geometries from
\cite{DemkowiczGNS_17_SDM,GopalakrishnanS_SDM}. Furthermore, the construction of
their trace discretization is done without explicitly defining the domain parts,
although they are needed for the stability and approximation analysis.
In conclusion, in comparison with the current state of the framework from
\cite{DemkowiczGNS_17_SDM}, our strategy has the advantages of giving access to
singularly perturbed problems, being extendable to non-$L_2$ settings,
avoiding domain contributions for trace discretizations, and not requiring
density assumptions which can be hard to prove
(though, see \cite[Proposition 2.1]{AmaraCPC_02_BMM} for the density of smooth tensor functions
in $H(\div\Div\!,\Omega)$ defined by the graph norm without symmetry).

Now, to continue discussing the contents of our paper,
having the analysis of the adjoint problem from \S\ref{sec_adj_well} at hand,
the proofs of Theorems~\ref{thm_stab} and~\ref{thm_DPG} are straightforward.
They are given in \S\ref{sec_pf}.
Finally, in Section~\ref{sec_num} we discuss the construction of discrete spaces for
our DPG scheme and give some numerical examples. \S\ref{sec:discretespaces}
is devoted to the construction of lowest-order basis functions. Whereas the field variables
do not require any continuity across element interfaces, it is more technical to
identify unknowns associated with trace variables.
Specifically, the construction of basis functions for traces of
$H(\div\Div\!,\Omega)$ requires to identify \emph{local} continuity constraints. It turns out
that traces of $H(\div\Div\!,\Omega)$-functions cannot be split into natural components
that allow for such a construction. This is analogous to $H(\div\!,\Omega)$ where one
uses a slightly more regular subspace of vector functions with normal (then localizable)
traces in $L_2$. In the literature, this subspace is usually denoted by $\cH(\div\!,\Omega)$.
In $H(\div\Div\!,\Omega)$ the situation is worse since the definition of
traces requires to integrate by parts \emph{twice}. This generates two combined traces.
We present lowest-order basis functions (for traces of $H(\div\Div\!,\Omega)$)
that correspond to local unknowns associated with edges and nodes of triangular elements,
plus jump constraints associated with interior nodes and neighboring elements.
These constraints can be imposed by Lagrange multipliers.
For sufficiently smooth solutions, our lowest order scheme
converges with optimal order (Theorem~\ref{thm:approxU}).
This result assumes the use of \emph{optimal test functions} whereas, obviously,
our numerical implementation uses approximated optimal test functions.
We do not analyze the influence of this approximation here.
In \S\ref{sec_num_ex} we present numerical results for two examples, the case
of a smooth solution and the case of a singular solution.
Uniform mesh refinement yields optimal and sub-optimal convergence, respectively,
whereas an adaptive variant restores optimal convergence for the singular example.
It is worth mentioning that the singular example solution generates a tensor of
$H(\div\Div\!,\Omega)$ whose divergence is less than $L_2$-regular.
This shows, in particular, that our analysis of traces and jumps in $H(\div\Div\!,\Omega)$
cannot be split into two steps/spaces (symmetric tensors in $\bH(\Div\!,\Omega)$
whose divergence are elements of $H(\div\!,\Omega)$).

To conclude, the central focus of this paper is on the analysis of traces and jumps in
$H(\div\Div\!,\Omega)$, in Section~\ref{sec_traces_jumps}.
Despite of considering a plate model, this analysis is done in two and three space dimensions.
It is relevant for other fourth-order problems in three dimensions.
Section~\ref{sec_traces_jumps} is split into several subsections.
In the first two, \S\S\ref{sec_trace_dd} and \ref{sec_trace_gg}, we
define and analyze trace operators in $H(\div\Div\!,\Omega)$ and $H^2_0(\Omega)$
(denoted by $\traceDD{}$ and $\traceGG{}$, with local versions
$\traceDD{T}$ and $\traceGG{T}$, respectively), and corresponding trace spaces and norms.
In \S\ref{sec_jump_dd}, we consider the product variant $H(\div\Div\!,\cT)$ of $H(\div\Div\!,\Omega)$
and jumps of its elements. Specifically, we characterize the inclusion
$H(\div\Div\!,\cT)\subset H(\div\Div\!,\Omega)$ through (vanishing) duality with $H^2_0(\Omega)$
(Proposition~\ref{prop_dd_jump}). In \S\ref{sec_jump2_dd} we revisit (a subspace of)
the product space $H(\div\Div\!,\cT)$ and study traces rather than jumps (of course, trace operators can
be used to define and analyze jumps).
We define a dense product subspace $\cH(\div\Div\!,\cT)\subset H(\div\Div\!,\cT)$
and prove that our previous ``local'' trace operators $\traceDD{T}$
(they act on boundaries of elements) can be further localized when acting on this subspace
(Proposition~\ref{prop_cor_dd_jump}).
This is of utmost importance for the numerical scheme since it implies density of our discrete spaces
in $H(\div\Div\!,\Omega)$, and thus convergence.
\S\ref{sec_jump_gg} corresponds to \S\ref{sec_jump_dd}, considering
jumps of a product space $H^2(\cT)$ rather than of $H(\div\Div\!,\cT)$, with continuity
characterization by duality with the trace space $\traceDD{}(H(\div\Div\!,\Omega))$
(Proposition~\ref{prop_gg_jump}).

The final Subsection~\ref{sec_gg_Poincare} provides a Poincar\'e inequality
in the product space $H^2(\cT)$. Recall that traditional stability proofs of adjoint
problems separate the analysis into a global non-homogeneous problem and a homogeneous
one in product spaces and with jump data. The non-homogeneous problem usually gives control of
a seminorm of the primal variable so that a Poincar\'e inequality is required to bound
the norm. Furthermore, proving stability of homogeneous adjoint problems with jump data
is usually done via a Helmholtz decomposition.
For details see, e.g., \cite[Lemmas 4.2, 4.3]{DemkowiczG_11_ADM}.
In our case, the global adjoint problem gives also only access to a seminorm of the primal variable,
and still, the connection between jump data and the field variable is established by a Helmholtz
decomposition. We combine both techniques and give a short proof of a Poincar\'e inequality
in $H^2(\cT)$ which uses a Helmholtz decomposition only implicitly.

Throughout the paper, $a\lesssim b$ means that $a\le cb$ with a generic constant $c>0$ that is independent of
the underlying mesh (except for possible general restrictions like shape-regularity of elements).
Similarly, we use the notation $a\simeq b$ and $a\gtrsim b$.

\section{Model problem} \label{sec_model}

We start by recalling the Kirchhoff--Love model, cf.~\cite{VentselK_01_TPS}.
The static variables of the model are the shear force vector $\QQ$ and the symmetric bending
moment tensor $\MM$.
These stand for stress resultants representing internal forces and moments per unit length
along the coordinate lines on the plate mid-surface $\Omega$.
They are related to the external surface load $f$ and to each other by the laws of static
equilibrium (force and moment balance) as 
\begin{alignat*}{2}
   -\div \QQ &= f  && \quad\text{in} \quad \Omega,\\ 
   \QQ &= \Div \MM && \quad\text{in} \quad \Omega.   
\end{alignat*}
The operator $\div$ denotes the divergence of vector functions, and $\Div$ is the divergence
operator acting on rows of tensors.
Denoting by $\Grad$ the infinitesimal strain tensor, or symmetric gradient, we introduce
the bending curvature $\kkappa = \Grad(\grad u)
:=\frac 12(\boldsymbol{\grad}(\grad u)+\boldsymbol{\grad}(\grad u)^T)$, the Hessian of $u$ in our case.
For linearly elastic isotropic material, the bending moments can be written in terms of $\kkappa$ as
\begin{equation*}
   \MM = -\cC\kkappa = -D [\nu\,\tr \kkappa \mathbf{I} + (1-\nu) \kkappa]
\end{equation*}
where 
\[
   D= \frac{Et^3}{12(1-\nu^2)} 
\]
is the bending rigidity of the plate defined in terms of the Young modulus $E$ and
Poisson ratio $\nu$ of the material and the plate thickness $t$.
The values of these parameters are not very critical concerning the numerical solution of the
problem. $D$ acts as scaling parameter and the influence of the Poisson ratio on the solution is mild.
We select fixed $\nu\in(-1,1/2]$ and $t>0$ so that $\cC$ is positive definite.

Let us now assume that $\Omega\subset\R^\di$ ($\di=2,3$) is a bounded simply connected Lipschitz domain
with boundary $\Gamma=\partial\Omega$.
(Of course, for the plate-bending problem, only $\di=2$ is physically motivated.)
For a given $f\in L_2(\Omega)$ our model problem is
\begin{subequations} \label{prob}
\begin{alignat}{2}
     -\div\Div\MM               &= f  && \quad\text{in} \quad \Omega\label{p1},\\
    \cCinv \MM + \Grad\grad u   &= 0  && \quad\text{in} \quad \Omega\label{p3},\\
    u = 0,\quad \nn\cdot\grad u &= 0 &&\quad\text{on}\quad\Gamma.\label{pBC}
\end{alignat}
\end{subequations}
Here, $\nn$ is the exterior unit normal vector on $\Gamma$. Later, $\nn$ will be used generically
for normal vectors. Before starting to develop a variational formulation, we introduce a mesh $\cT$
that consists of general non-intersecting open Lipschitz elements.
Only in \S\ref{sec_jump2_dd} we will require that the mesh is conforming and consists of
generalized (curved) polyhedra/polygons, and in the numerical section \S\ref{sec_num}
we restrict ourselves to two space dimensions and conforming triangular meshes of shape-regular
elements. To the mesh $\cT=\{T\}$ we associate the skeleton $\cS=\{\partial T;\;T\in\cT\}$.
For $T\in\cT$, scalar functions $z$ and symmetric tensors $\TTheta$, let us define the norms
\begin{align*}
   \|z\|_{2,T}^2 &:= \|z\|_T^2 + \|\Grad\grad z\|_T^2,\quad
   \|\TTheta\|_{\div\Div\!,T}^2 := \|\TTheta\|_T^2 + \|\div\Div\TTheta\|_T^2,
\end{align*}
and induced spaces $H^2(T)$, $H(\div\Div\!,T)$ as closures of
$\cD(\overline{T})$ and $\DDs(\overline{T})$ with respect to the corresponding norm.
Here, $\cD(\overline{T})$ and $\DDs(\overline{T})$ are the spaces of smooth functions
and smooth symmetric tensors, respectively, on $T$.
(The logic for the notation $H(\div\Div\!,T)$ with plain letter $H$ is that tensors are mapped
to scalar functions by the operator $\div\Div\!$. Similarly, below we introduce $\bH(\Div\!,T)$
with bold $\bH$ as $\Div\!$ maps tensor functions to vector functions.)
Throughout the paper, $\|\cdot\|_\omega$ denotes the $L_2(\omega)$-norms for scalar, vector and
tensor functions on the indicated set $\omega$.
When $\omega=\Omega$ we drop the index and simply write $\|\cdot\|$ instead of $\|\cdot\|_\Omega$.
The corresponding bilinear forms are $\vdual{\cdot}{\cdot}_\omega$ and $\vdual{\cdot}{\cdot}$.
The spaces $\LL_2^s(\Omega)$ and $\LL_2^s(T)$ denote symmetric tensor functions on $\Omega$
and $T$, respectively.

Now, given a mesh $\cT$, we define product spaces
(tacitly identifying product spaces with their broken variants)
\begin{align*}
   H^2(\cT)      &:= \{z\in L_2(\Omega);\; z|_T\in H^2(T)\ \forall T\in\cT\},\\
   H(\div\Div\!,\cT) &:= \{\TTheta\in\LL_2^s(\Omega);\; \TTheta|_T\in H(\div\Div\!,T)\}
\end{align*}
with canonical product norms $\|\cdot\|_{2,\cT}$ and $\|\cdot\|_{\div\Div\!,\cT}$, respectively.
We will also need the global spaces $H^2_0(\Omega)$ and $H(\div\Div\!,\Omega)$
which are the closures of $\cD(\Omega)$ and $\DDs(\overline{\Omega})$, respectively, with
corresponding norms $\|z\|_2^2=\|z\|^2+\|\Grad\grad z\|^2$ and
$\|\TTheta\|_{\div\Div}^2=\|\TTheta\|^2+\|\div\Div\TTheta\|^2$, and similarly $H^2_0(T)$ for $T\in\cT$.

Now, we test
\begin{align*}
   &\eqref{p1}\text{ with } z\in H^2(\cT),
   \quad\text{and}\quad
   \eqref{p3}\text{ with }\TTheta\in H(\div\Div\!,\cT).
\end{align*}
Formally integrating by parts on every element $T\in\cT$ and summing over the elements
and summing the two equations, the testing results in
\begin{equation} \label{VFa}
\begin{split}
     \vdual{\MM}{\pwGrad\pwgrad z}
   + \sum_{T\in\cT} \dual{\nn\cdot\Div\MM}{z}_{\partial T}
   - \sum_{T\in\cT} \dual{\MM\nn}{\grad z}_{\partial T}
   \\
   + \vdual{\cCinv\MM}{\TTheta} + \vdual{u}{\pwdiv\pwDiv\TTheta}
   + \sum_{T\in\cT} \dual{\TTheta\nn}{\grad u}_{\partial T}
   - \sum_{T\in\cT} \dual{\nn\cdot\Div\TTheta}{u}_{\partial T}
     &= -\vdual{f}{z}.
\end{split}
\end{equation}
Here and in the following, a differential operator with index $\cT$ means that it is taken
piecewise with respect to the elements $T\in\cT$. We will write equivalently,
e.g., $\vdual{\MM}{\pwGrad\pwgrad z}=\vdual{\MM}{\Grad\grad z}_\cT$, and similarly for other
differential operators taken in a piecewise form.
Furthermore, we use the generic notation $\nn$ for the unit normal vector on $\partial T$ and $\Gamma$, pointing
outside $T$ and $\Omega$, respectively.
The notation $\dual{\cdot}{\cdot}_{\omega}$, and later $\dual{\cdot}{\cdot}_\Gamma$,
indicate dualities on $\omega\subset\partial T$ and $\Gamma$, respectively, with $L_2$-pivot space.

At this point it is not clear whether the appearing normal components in \eqref{VFa}
on the boundaries of elements are well defined. Indeed, essential part of this paper is to
study the relation between traces and jumps of the involved spaces $H^2(\cT)$, $H^2_0(\Omega)$,
$H(\div\Div\!,\cT)$ and $H(\div\Div\!,\Omega)$. This will be done in the next section, before
returning to a variational formulation of \eqref{prob} in Section~\ref{sec_uw}.

\section{Traces, jumps and a Poincar\'e inequality} \label{sec_traces_jumps}

In the following we introduce and analyze operators and norms that serve to give the terms
$\nn\cdot\Div\MM|_{\partial T}$, $\MM\nn|_{\partial T}$,
$\nn\cdot\Div\TTheta|_{\partial T}$, and $\TTheta\nn|_{\partial T}$,
from \eqref{VFa} a meaning for $\MM\in H(\div\Div\!,\Omega)$ and $\TTheta\in H(\div\Div\!,\cT)$.

\subsection{Traces of $H(\div\Div\!,\Omega)$} \label{sec_trace_dd}

We start by introducing linear operators $\traceDD{T}:\;H(\div\Div\!,T)\to H^2(T)'$
for $T\in\cT$ by
\begin{align} \label{trT_dd}
   \dual{\traceDD{T}(\TTheta)}{z}_{\partial T} := \vdual{\div\Div\TTheta}{z}_T - \vdual{\TTheta}{\Grad\grad z}_T.
\end{align}
We note that this definition is consistent with the observation made by Amara \emph{at al.} in
\cite[Theorem~2.2]{AmaraCPC_02_BMM} (they consider the whole domain $\Omega$ instead of an element $T$).
The range of the operator $\traceDD{T}$ is denoted by
\[
   \bH^{-3/2,-1/2}(\partial T) := \traceDD{T}(H(\div\Div\!,T)),\quad T\in\cT.
\]
These traces are supported on the boundary of the respective element since
\[
   \vdual{\div\Div\TTheta}{z}_T - \vdual{\TTheta}{\Grad\grad z}_T = 0\quad
   \forall \TTheta\in H(\div\Div\!,T),\ \forall z\in H^2_0(T),
\]
cf.~Proposition~\ref{prop_dd_jump} below.
It is therefore clear that, for given $\TTheta$,
the duality $\dual{\traceDD{T}(\TTheta)}{z}_{\partial T}$ only depends on the traces of $z$ and $\grad z$
on $\partial T$. Analogously, $\traceDD{T}(\TTheta)=0$ for any $\TTheta\in\DDs(T)$
(smooth symmetric tensors with support in $T$) since
\[
   \vdual{\div\Div\TTheta}{z}_T - \vdual{\TTheta}{\Grad\grad z}_T = 0\quad
   \forall \TTheta\in\DDs(T),\ \forall z\in H^2(T).
\]

\begin{remark} \label{rem_traceDD}
Let us define $\bH(\Div\!,T)$ as the closure of $\DDs(\overline{T})$ with respect to the norm
$(\|\cdot\|^2+\|\Div(\cdot)\|^2)^{1/2}$. It is clear that $\bH(\Div\!,T)$ is not a subspace of
$H(\div\Div\!,T)$, nor is $H(\div\Div\!,T)$ a subspace of $\bH(\Div\!,T)$
(see the second example in \S\ref{sec_num_ex}). This is precisely the
reason we have to consider the trace operator $\traceDD{T}$ in the form \eqref{trT_dd}.
When restricting this operator as
\begin{align*}
   \traceDD{T}:\;
   \left\{\begin{array}{cll}
      H(\div\Div\!,T)\cap \bH(\Div\!,T) & \to & H^2(T)',\\
      \TTheta & \mapsto & 
      \dual{\nn\cdot\Div\TTheta}{z}_{\partial T} - \dual{\TTheta\nn}{\grad z}_{\partial T},
   \end{array}\right.
\end{align*}
it reduces to standard trace operations.
In this case the two dualities are defined independently in the standard way,
\begin{align*}
   \dual{\nn\cdot\Div\TTheta}{z}_{\partial T} &:= \vdual{\Div\TTheta}{\grad z}_T + \vdual{\div\Div\TTheta}{z}_T,\\
   \dual{\TTheta\nn}{\grad z}_{\partial T} &:= \vdual{\TTheta}{\Grad\grad z}_T + \vdual{\Div\TTheta}{\grad z}_T.
\end{align*}
Now, in the three-dimensional case $\di=3$, defining the tangential trace
$\pi_\bt(\bphi):=\nn\times(\bphi\times\nn)|_{\partial T}$
for $\bphi\in\bD(\overline{T})$ and the surface gradient
$\grad_{\partial T}(\cdot):=\pi_\bt(\grad \cdot)|_{\partial T}$, we formally write
\begin{align} \label{surface}
   \dual{\TTheta\nn}{\grad z}_{\partial T}
   =
   \dual{\pi_\bt(\TTheta\nn)}{\grad_{\partial T} z}_{\partial T}
   + \dual{\nn\cdot\TTheta\nn}{\nn\cdot\grad z}_{\partial T}.
\end{align}
Correspondingly, in two dimensions ($\di=2$), we introduce the unit tangential vector $\bt$ along $\partial T$
in mathematically positive orientation, and use the notation
$\pi_\bt(\bphi):=(\bt\cdot\bphi)\bt|_{\partial T}$ for $\bphi\in\bD(\overline{T})$
with corresponding tangential derivative
$\grad_{\partial T}(\cdot):=\pi_\bt(\grad \cdot)|_{\partial T}$.
Then, \eqref{surface} applies as well.
We also need the surface divergence $\divwo_{\partial T}(\cdot)$ defined by
$\dual{\divwo_{\partial T}(\bphi)}{z}_{\partial T}:=-\dual{\bphi}{\grad_{\partial T}z}_{\partial T}$
for sufficiently smooth vector functions $\bphi$ with $\pi_\bt(\bphi)=\bphi$.
For precise definitions and appropriate spaces we refer to \cite{BuffaCS_02_THL}.
With these definitions it is clear that we can define separate traces
\begin{align} \label{trace_n}
   \traceDD{T,\nn}:\;
   \left\{\begin{array}{cll}
      H(\div\Div\!,T) & \to & \bigl(H^2(T)\cap H^1_0(T)\bigr)'\\
      \TTheta & \mapsto & \dual{\nn\cdot\TTheta\nn}{\nn\cdot\grad z}_{\partial T} := -\dual{\traceDD{T}(\TTheta)}{z}_{\partial T}
   \end{array}\right.
\end{align}
and
\begin{align} \label{trace_t}
   \traceDD{T,\bt}:\;
   \left\{\begin{array}{cll}
      H(\div\Div\!,T) & \to & \{z\in H^2(T);\; \nn\cdot\grad z = 0\ \text{on}\ \partial T\}'\\
      \TTheta & \mapsto &
      \dual{\nn\cdot\Div\TTheta + \divwo_{\partial T}\pi_\bt(\TTheta\nn)}{z}_{\partial T}
      := \dual{\traceDD{T}(\TTheta)}{z}_{\partial T}
   \end{array}\right.
\end{align}
that coincide with the corresponding trace terms for sufficiently smooth functions $\TTheta$,
cf.~the operators $\gamma_0$ and $\gamma_1$ in \cite[page~1635]{AmaraCPC_02_BMM}.
On the one hand, these traces are relevant to identify basis functions for the approximation of traces of
$H(\div\Div\!,\Omega)$-functions.
On the other hand, specifying one of these traces, the other is well defined as a functional
acting on traces of $H^2$-functions (without the trace conditions for
$z$ in \eqref{trace_n} and \eqref{trace_t}).
Applying this on the boundary $\Gamma$ of $\Omega$, it is possible to specify
any physically meaningful boundary condition based on the terms
$\nn\cdot\MM\nn$, $\nn\cdot\Div\MM + \divwo_\Gamma\pi_\bt(\MM\nn)$, $u$, and $\nn\cdot\grad u$ on $\Gamma$.
Note that $\divwo_\Gamma$ refers to the operator that is dual to the (negative) global surface 
gradient $-\grad_\Gamma$, and integrating by parts piecewise on subsets of $\Gamma$ generates
a piecewise surface divergence plus jumps at the interfaces, cf.~\cite{RafetsederZ_DRK}.
Indeed, these jumps will be essential for the approximation analysis,
based on Proposition~\ref{prop_cor_dd_jump} below.
\end{remark}

The collective trace operator is defined by
\[
   \traceDD{}:\;
   \left\{\begin{array}{cll}
      H(\div\Div\!,\Omega) & \to & H^2(\cT)',\\
      \TTheta & \mapsto & \traceDD{}(\TTheta) := (\traceDD{T}(\TTheta))_T
   \end{array}\right.
\]
with duality
\begin{align} \label{tr_dd}
    \dual{\traceDD{}(\TTheta)}{z}_\cS := \sum_{T\in\cT} \dual{\traceDD{T}(\TTheta)}{z}_{\partial T}
\end{align}
and range
\begin{align*}
   \bH^{-3/2,-1/2}(\cS) := \traceDD{}(H(\div\Div\!,\Omega)).
\end{align*}
(Here, and in the following, considering dualities $\dual{\cdot}{\cdot}_{\partial T}$ on the whole of
$\partial T$, possibly involved traces onto $\partial T$ are always taken from $T$ without further notice.)
These global and local traces are measured in the \emph{minimum energy extension} norms,
\begin{align*}
   \|\bq\|_\trddiv{\cS} &:= \inf \Bigl\{\|\TTheta\|_{\div\Div};\; \TTheta\in H(\div\Div\!,\Omega),\ \traceDD{}(\TTheta)=\bq\Bigr\},\\
   \|\bq\|_\trddiv{\partial T} &:= \inf \Bigl\{\|\TTheta\|_{\div\Div\!,T};\; \TTheta\in H(\div\Div\!,T),\ \traceDD{T}(\TTheta)=\bq\Bigr\}.
\end{align*}
Alternative norms are defined by duality as follows,
\begin{align}
   \|\bq\|_{-3/2,-1/2,\partial T} &:=  \sup_{0\not=z\in H^2(T)} \frac{\dual{\bq}{z}_{\partial T}}{\|z\|_{2,T}},
   \quad \bq\in \bH^{-3/2,-1/2}(\partial T),\ T\in\cT,\nonumber\\
   \label{norm_tr_dd}
   \|\bq\|_{-3/2,-1/2,\cS} &:=  \sup_{0\not=z\in H^2(\cT)} \frac{\dual{\bq}{z}_\cS}{\|z\|_{2,\cT}},
   \quad \bq\in \bH^{-3/2,-1/2}(\cS).
\end{align}
Here, for given $\bq\in \bH^{-3/2,-1/2}(\partial T)$, the duality with $z\in H^2(T)$ is defined by
\[
   \dual{\bq}{z}_{\partial T} := \dual{\traceDD{T}(\TTheta)}{z}_{\partial T}
   \quad\text{for } \TTheta\in H(\div\Div\!,T)\text{ with } \traceDD{T}(\TTheta) = \bq,
\]
and, for $\bq=(\bq_T)_T\in \bH^{-3/2,-1/2}(\cS)$ and $z\in H^2(\cT)$,
\begin{align} \label{tr_dd_dual}
   \dual{\bq}{z}_\cS := \sum_{T\in\cT} \dual{\bq_T}{z}_{\partial T}.
\end{align}
This is consistent with definitions \eqref{trT_dd} and \eqref{tr_dd}.

\begin{lemma} \label{la_tr_dd_norms}
It holds the identity
\[
   \|\bq\|_{-3/2,-1/2,\partial T} = \|\bq\|_\trddiv{\partial T}\quad
   \forall \bq\in \bH^{-3/2,-1/2}(\partial T),\ T\in \cT,
\]
so that
\[
   \traceDD{T}:\; H(\div\Div\!,T)\to \bH^{-3/2,-1/2}(\partial T)
\]
has unit norm and $\bH^{-3/2,-1/2}(\partial T)$ is closed.
\end{lemma}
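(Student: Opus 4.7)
The plan is to prove both inequalities comprising the claimed identity; the unit-norm property of $\traceDD{T}$ and the closedness of $\bH^{-3/2,-1/2}(\partial T)$ will follow as immediate corollaries. The main obstacle is the nontrivial direction $\|\bq\|_\trddiv{\partial T}\le\|\bq\|_{-3/2,-1/2,\partial T}$, which I would attack by explicit construction of a minimizing extension via Riesz representation.

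The easy direction $\|\bq\|_{-3/2,-1/2,\partial T}\le \|\bq\|_\trddiv{\partial T}$ follows directly from definition \eqref{trT_dd}: for any $\TTheta\in H(\div\Div\!,T)$ representing $\bq$ and any $z\in H^2(T)$, Cauchy--Schwarz yields
\[
   |\dual{\bq}{z}_{\partial T}|
   \le \|\div\Div\TTheta\|_T\|z\|_T + \|\TTheta\|_T\|\Grad\grad z\|_T
   \le \|\TTheta\|_{\div\Div\!,T}\|z\|_{2,T};
\]
taking the supremum over $z$ and the infimum over admissible $\TTheta$ gives the inequality.

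For the reverse inequality, I observe that the bilinear form $(w,z)\mapsto\vdual{w}{z}_T+\vdual{\Grad\grad w}{\Grad\grad z}_T$ is exactly the inner product inducing $\|\cdot\|_{2,T}$ on $H^2(T)$, while the linear functional $z\mapsto \dual{\bq}{z}_{\partial T}$ has $H^2(T)'$-norm equal to $\|\bq\|_{-3/2,-1/2,\partial T}$ by definition. Riesz representation therefore produces a unique $z^*\in H^2(T)$ with
\[
   \vdual{z^*}{z}_T + \vdual{\Grad\grad z^*}{\Grad\grad z}_T = \dual{\bq}{z}_{\partial T}\quad\forall z\in H^2(T),
\]
and $\|z^*\|_{2,T}=\|\bq\|_{-3/2,-1/2,\partial T}$. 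Setting $\TTheta^*:=-\Grad\grad z^*\in\LL_2^s(T)$, testing the above against $z\in\cD(T)$ (so that the right-hand side vanishes) and integrating by parts twice (valid since $z$ has compact support) gives $\vdual{z^*}{z+\Delta^2 z}_T=0$, hence $z^*+\Delta^2 z^*=0$ as distributions in $T$. Consequently $\div\Div\TTheta^*=-\Delta^2 z^*=z^*\in L_2(T)$, so $\TTheta^*\in H(\div\Div\!,T)$ with
\[
   \|\TTheta^*\|_{\div\Div\!,T}^2 = \|\Grad\grad z^*\|_T^2 + \|z^*\|_T^2 = \|z^*\|_{2,T}^2 = \|\bq\|_{-3/2,-1/2,\partial T}^2.
\]
Reinserting into \eqref{trT_dd} and using the variational identity confirms $\traceDD{T}(\TTheta^*)=\bq$, so $\TTheta^*$ is admissible in the infimum defining $\|\bq\|_\trddiv{\partial T}$ and realizes equality.

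The corollaries now follow. The easy inequality reads $\|\traceDD{T}(\TTheta)\|_{-3/2,-1/2,\partial T}\le\|\TTheta\|_{\div\Div\!,T}$, and the explicit extension $\TTheta^*$ shows this bound is attained, giving unit norm. For closedness of $\bH^{-3/2,-1/2}(\partial T)$ in $H^2(T)'$, a Cauchy sequence $(\bq_n)$ produces via the Riesz construction a Cauchy sequence of minimizers $(\TTheta_n^*)\subset H(\div\Div\!,T)$ with $\|\TTheta_n^*-\TTheta_m^*\|_{\div\Div\!,T}=\|\bq_n-\bq_m\|_{-3/2,-1/2,\partial T}$; its limit $\TTheta^*$ then satisfies $\traceDD{T}(\TTheta^*)=\lim \bq_n$ by continuity. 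The critical point in the whole argument is the distributional identification $\Delta^2 z^*=-z^*$, which is what promotes $\TTheta^*$ from $\LL_2^s(T)$ into $H(\div\Div\!,T)$ and furnishes an element attaining the minimum.
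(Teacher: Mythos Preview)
Your proof is correct and follows essentially the same approach as the paper: both construct the Riesz representer $z^*\in H^2(T)$ of the functional $\bq$ and use $\TTheta^*=-\Grad\grad z^*$ as the minimum-norm extension. The paper takes a small detour by first defining $\TTheta$ through a second Riesz problem in $H(\div\Div\!,T)$ and then verifying $\TTheta=-\Grad\grad z$, whereas you set $\TTheta^*:=-\Grad\grad z^*$ directly and check its properties; your route is slightly more economical but the substance is identical.
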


\begin{proof}
The estimate $\|\bq\|_{-3/2,-1/2,\partial T}\le \|\bq\|_\trddiv{\partial T}$ is immediate by bounding
\begin{align*}
   \dual{\traceDD{T}(\TTheta)}{z}_{\partial T} &\le \|\TTheta\|_{\div\Div\!,T} \|z\|_{2,T}
   \quad\forall\TTheta\in H(\div\Div\!,T),\ \forall z\in H^2(T),\ T\in\cT.
\end{align*}
Now let $T\in\cT$ and $\bq\in\bH^{-3/2,-1/2}(\partial T)$ be given.
We define $z\in H^2(T)$ as the solution to the problem
\begin{align} \label{prob_dd_z}
   \vdual{\Grad\grad z}{\Grad\grad \deltaz}_T + \vdual{z}{\deltaz}_T
   =
   \dual{\bq}{\deltaz}_{\partial T}
   \quad\forall \deltaz\in H^2(T).
\end{align}
Note that the right-hand side functional implies a natural boundary condition for $z$. Furthermore,
since $\dual{\bq}{\deltaz}_{\partial T}=0$ for $\deltaz\in H^2_0(T)$, $z$ satisfies
\begin{align} \label{pde_dd_z}
   \div\Div\Grad\grad z + z = 0\quad\text{in}\ T,
\end{align}
first in the distributional sense and, by the regularity of $z$, also in $L_2(T)$.
Using the function $z$ we continue to define $\TTheta\in H(\div\Div\!,T)$ as the solution to
\begin{align} \label{prob_dd_QQ}
   \vdual{\div\Div\TTheta}{\div\Div\Deltaq}_T + \vdual{\TTheta}{\Deltaq}_T
   =
   \dual{\traceDD{T}(\Deltaq)}{z}_{\partial T}
   \quad\forall\Deltaq\in H(\div\Div\!,T).
\end{align}
Again, the right-hand side functional induces a natural boundary condition for $\TTheta$, and it holds
\begin{align} \label{pde_dd_QQ}
   \Grad\grad\div\Div\TTheta + \TTheta = 0\quad\text{in}\ \LL_2^s(T).
\end{align}
We show that $\TTheta=-\Grad\grad z$. Indeed, defining $\TTheta^z:=-\Grad\grad z$, we find with \eqref{pde_dd_z} that
\(
   \div\Div\TTheta^z = z
\)
so that by definition of $\TTheta^z$ and definition \eqref{trT_dd} of $\traceDD{T}$, 
\begin{align*}
   \vdual{\div\Div\TTheta^z}{\div\Div\Deltaq}_T + \vdual{\TTheta^z}{\Deltaq}_T
   =
   \vdual{z}{\div\Div\Deltaq}_T - \vdual{\Grad\grad z}{\Deltaq}_T
   =
   \dual{\traceDD{T}(\Deltaq)}{z}_{\partial T}
\end{align*}
for any $\Deltaq\in H(\div\Div\!,T)$. This shows that $\TTheta^z$ solves \eqref{prob_dd_QQ}
and by uniqueness, $\TTheta=\TTheta^z=-\Grad\grad z$.
Using this relation and $\div\Div\TTheta^z = z$, it follows by \eqref{prob_dd_z} that
\begin{align*}
   \dual{\traceDD{T}(\TTheta)}{\deltaz}_{\partial T}
   &=
   \vdual{\div\Div\TTheta}{\deltaz}_T - \vdual{\TTheta}{\Grad\grad\deltaz}_T
   \\
   &=
   \vdual{z}{\deltaz}_T + \vdual{\Grad\grad z}{\Grad\grad\deltaz}_T
   =
   \dual{\bq}{\deltaz}_{\partial T}
   \quad\forall\deltaz\in H^2(T).
\end{align*}
In other words, $\traceDD{T}(\TTheta)=\bq$.
This relation together with selecting $\deltaz=z$ in \eqref{prob_dd_z} and $\Deltaq=\TTheta$ in \eqref{prob_dd_QQ}, shows that
\begin{align} \label{pf_tr_dd_norms_1}
   \dual{\bq}{z}_{\partial T} = \|z\|_{2,T}^2 = \dual{\traceDD{T}(\TTheta)}{z}_{\partial T} = \|\TTheta\|_{\div\Div\!,T}^2.
\end{align}
Noting that
\[
   \|\TTheta\|_{\div\Div\!,T} = \inf\{\|\wilde\TTheta\|_{\div\Div\!,T};\; \wilde\TTheta\in H(\div\Div\!,T),\ \traceDD{T}(\wilde\TTheta)=\bq\}
                        = \|\bq\|_\trddiv{\partial T}
\]
by \eqref{pde_dd_QQ}, relation \eqref{pf_tr_dd_norms_1} finishes the proof of the norm identity.
The space $\bH^{-3/2,-1/2}(\partial T)$ is closed as the image of a bounded below operator.
\end{proof}

\subsection{Traces of $H^2_0(\Omega)$} \label{sec_trace_gg}

Let us study traces of $H^2_0(\Omega)$ in a similar way as $H(\div\Div\!,\Omega)$ in the previous section.

We define linear operators $\traceGG{T}:\;H^2(T)\to H(\div\Div\!,T)'$ for $T\in\cT$ by
\begin{align} \label{trT_gg}
   \dual{\traceGG{T}(z)}{\TTheta}_{\partial T} := \vdual{\div\Div\TTheta}{z}_T - \vdual{\TTheta}{\Grad\grad z}_T
\end{align}
and observe that (cf.~\eqref{trT_dd})
\begin{equation} \label{traces_id}
   \dual{\traceGG{T}(z)}{\TTheta}_{\partial T} = \dual{\traceDD{T}(\TTheta)}{z}_{\partial T}
   \quad\forall z\in H^2(T),\ \forall\TTheta\in H(\div\Div\!,T).
\end{equation}
The ranges are denoted by
\[
   \bH^{3/2,1/2}(\partial T) := \traceGG{T}(H^2(T)),\quad T\in\cT.
\]
It is immediate that $\traceGG{T}(z)=0$ if and only if $z\in H^2_0(T)$.
The collective trace operator is defined by
\[
   \traceGG{}:\;
   \left\{\begin{array}{cll}
      H^2_0(\Omega) & \to & H(\div\Div\!,\cT)',\\
      z & \mapsto & \traceGG{}(z) := (\traceGG{T}(z))_T
   \end{array}\right.
\]
with duality
\begin{align} \label{tr_gg}
    \dual{\traceGG{}(z)}{\TTheta}_\cS := \sum_{T\in\cT} \dual{\traceGG{T}(z)}{\TTheta}_{\partial T}
\end{align}
and range
\begin{align*}
   \bH^{3/2,1/2}_{00}(\cS) := \traceGG{}(H^2_0(\Omega)).
\end{align*}
These trace spaces are provided with canonical trace norms,
\begin{align*}
   &\|\bv\|_\trggrad{\partial T} = \inf\{\|v\|_{2,T};\; v\in H^2(T),\ \traceGG{T}(v)=\bv\},\quad T\in\cT,\\
   &\|\bv\|_\trggrad{0,\cS} = \inf\{\|v\|_2;\; v\in H^2_0(\Omega),\ \traceGG{}(v)=\bv\}.
\end{align*}
Alternative norms are defined by duality as follows,
\begin{align*}
   \|\bv\|_{3/2,1/2,\partial T} &:= 
   \sup_{0\not=\TTheta\in H(\div\Div\!,T)} \frac{\dual{\bv}{\TTheta}_{\partial T}}{\|\TTheta\|_{\div\Div\!,T}},
   \quad \bv\in \bH^{3/2,1/2}(\partial T),\ T\in\cT,\nonumber\\
   \|\bv\|_{3/2,1/2,00,\cS} &:=
   \sup_{0\not=\TTheta\in H(\div\Div\!,\cT)} \frac{\dual{\bv}{\TTheta}_\cS}{\|\TTheta\|_{\div\Div\!,\cT}},
   \quad \bv\in \bH^{3/2,1/2}_{00}(\cS).
\end{align*}
Here, for given $\bv\in \bH^{3/2,1/2}(\partial T)$, the duality with $\TTheta\in H(\div\Div\!,T)$ is defined by
\[
   \dual{\bv}{\TTheta}_{\partial T} := \dual{\traceGG{T}(z)}{\TTheta}_{\partial T}
   \quad\text{for } z\in H^2(T)\text{ with } \traceGG{T}(z) = \bv,
\]
and, for $\bv=(\bv_T)_T\in \bH^{3/2,1/2}_{00}(\cS)$ and $\TTheta\in H(\div\Div\!,\cT)$,
\begin{align} \label{tr_gg_dual}
   \dual{\bv}{\TTheta}_\cS := \dual{\traceGG{}(z)}{\TTheta}_\cS
   \quad\text{for } z\in H^2_0(\Omega)\text{ with } \traceGG{}(z) = \bv.
\end{align}
This is consistent with definitions \eqref{trT_gg} and \eqref{tr_gg}.

\begin{lemma} \label{la_tr_gg_norms}
It holds the identity
\[
   \|\bv\|_{3/2,1/2,\partial T} = \|\bv\|_\trggrad{\partial T}\quad
   \forall \bv\in \bH^{3/2,1/2}(\partial T),\ T\in \cT,
\]
so that
\[
   \traceGG{T}:\; H^2(T)\to \bH^{3/2,1/2}(\partial T)
\]
has unit norm and $\bH^{3/2,1/2}(\partial T)$ is closed.
\end{lemma}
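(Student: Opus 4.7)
The proof will mirror that of Lemma~\ref{la_tr_dd_norms} but with the roles of the two spaces swapped, exploiting the symmetry built into the identity \eqref{traces_id}.

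The easy direction $\|\bv\|_{3/2,1/2,\partial T}\le\|\bv\|_\trggrad{\partial T}$ is immediate: for any extension $z\in H^2(T)$ with $\traceGG{T}(z)=\bv$ and any $\TTheta\in H(\div\Div\!,T)$, definition \eqref{trT_gg} and Cauchy--Schwarz give
\[
   \dual{\bv}{\TTheta}_{\partial T}=\vdual{\div\Div\TTheta}{z}_T-\vdual{\TTheta}{\Grad\grad z}_T
   \le \|\TTheta\|_{\div\Div\!,T}\|z\|_{2,T},
\]
so dividing by $\|\TTheta\|_{\div\Div\!,T}$ and taking the infimum over extensions yields the claim.

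For the reverse inequality, given $\bv\in\bH^{3/2,1/2}(\partial T)$ the plan is to construct $\TTheta$ and $z$ in two steps and then relate them. First, let $\TTheta\in H(\div\Div\!,T)$ solve the Lax--Milgram problem
\[
   \vdual{\div\Div\TTheta}{\div\Div\Deltaq}_T+\vdual{\TTheta}{\Deltaq}_T
   =\dual{\bv}{\Deltaq}_{\partial T}\quad\forall\Deltaq\in H(\div\Div\!,T);
\]
the right-hand side is well defined through the duality pairing \eqref{traces_id} and induces a natural boundary condition, so $\TTheta$ satisfies $\Grad\grad\div\Div\TTheta+\TTheta=0$ in $\LL_2^s(T)$. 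Next, define $z\in H^2(T)$ via
\[
   \vdual{\Grad\grad z}{\Grad\grad\deltaz}_T+\vdual{z}{\deltaz}_T
   =\dual{\traceGG{T}(\deltaz)}{\TTheta}_{\partial T}\quad\forall\deltaz\in H^2(T),
\]
which gives $\div\Div\Grad\grad z+z=0$ in $L_2(T)$.

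The critical algebraic step is to check that $z=\div\Div\TTheta$. Setting $z^\TTheta:=\div\Div\TTheta$, the PDE for $\TTheta$ yields $\Grad\grad z^\TTheta=-\TTheta$, and substituting into the variational identity for $z$ shows that $z^\TTheta$ solves the same problem; uniqueness then forces $z=z^\TTheta$. Using this relation together with \eqref{trT_gg} and the defining equation for $\TTheta$, a direct computation gives $\traceGG{T}(z)=\bv$. Selecting $\deltaz=z$ and $\Deltaq=\TTheta$ in the two variational problems and combining with \eqref{traces_id} delivers
\[
   \|z\|_{2,T}^2=\dual{\traceGG{T}(z)}{\TTheta}_{\partial T}=\dual{\bv}{\TTheta}_{\partial T}=\|\TTheta\|_{\div\Div\!,T}^2.
\]
Since $z$ is a valid extension of $\bv$ we have $\|\bv\|_\trggrad{\partial T}\le\|z\|_{2,T}$, while bounding the rightmost duality by the definition of $\|\bv\|_{3/2,1/2,\partial T}$ and dividing gives $\|z\|_{2,T}\le\|\bv\|_{3/2,1/2,\partial T}$. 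Chaining yields equality of the three quantities, whence $\traceGG{T}$ has unit norm and $\bH^{3/2,1/2}(\partial T)$ is closed as the image of an operator that is bounded below on the orthogonal complement of its kernel.

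The main obstacle is purely bookkeeping: matching the signs in the identification $z=\div\Div\TTheta$ (here I expect one has to be careful because the analogous step in Lemma~\ref{la_tr_dd_norms} used $\TTheta=-\Grad\grad z$, and the opposite direction produces the opposite sign through the second-order PDE). Once the correct relation is pinned down, everything else is a transcription of the previous proof with the roles of $\traceDD{T}$ and $\traceGG{T}$ interchanged via \eqref{traces_id}.
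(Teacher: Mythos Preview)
Your proposal is correct and follows essentially the same construction as the paper: both define $\TTheta$ and $z$ via the two coupled variational problems, identify $z=\div\Div\TTheta$, verify $\traceGG{T}(z)=\bv$, and extract the norm equality from $\|z\|_{2,T}^2=\dual{\bv}{\TTheta}_{\partial T}=\|\TTheta\|_{\div\Div\!,T}^2$. The only cosmetic difference is in the final step: the paper notes that the PDE $\div\Div\Grad\grad z+z=0$ makes $z$ the \emph{minimum-norm} extension, so $\|z\|_{2,T}=\|\bv\|_\trggrad{\partial T}$ exactly, whereas you use only $\|\bv\|_\trggrad{\partial T}\le\|z\|_{2,T}$ and close the chain via the duality bound; both routes yield the identity.
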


\begin{proof}
The proof is very similar to that of Lemma~\ref{la_tr_dd_norms}.

The estimate $\|\bv\|_{3/2,1/2,\partial T}\le \|\bv\|_\trggrad{\partial T}$ follows by bounding
\begin{align*}
   \dual{\traceGG{T}(z)}{\TTheta}_{\partial T} &\le \|z\|_{2,T} \|\TTheta\|_{\div\Div\!,T}
   \quad\forall z\in H^2(T),\ \forall \TTheta\in H(\div\Div\!,T),\ T\in\cT.
\end{align*}
To show the other inequality, let $T\in\cT$ and $\bv\in\bH^{3/2,1/2}(\partial T)$ be given.
We define $\TTheta\in H(\div\Div\!,T)$ as the solution of
\begin{align} \label{prob_gg_QQ}
   \vdual{\div\Div\TTheta}{\div\Div\Deltaq}_T + \vdual{\TTheta}{\Deltaq}_T
   =
   \dual{\bv}{\Deltaq}_{\partial T}
   \quad\forall \Deltaq\in H(\div\Div\!,T).
\end{align}
It satisfies
\begin{align} \label{pde_gg_QQ}
   \Grad\grad\div\Div \TTheta + \TTheta = 0\quad\text{in}\ \LL_2^s(T).
\end{align}
We continue to define $z\in H^2(T)$ by
\begin{align} \label{prob_gg_z}
   \vdual{\Grad\grad z}{\Grad\grad\deltaz}_T + \vdual{z}{\deltaz}_T
   =
   \dual{\traceGG{T}(\deltaz)}{\TTheta}_{\partial T}
   \quad\forall\deltaz\in H^2(T).
\end{align}
It satisfies
\begin{align} \label{pde_gg_z}
   \div\Div\Grad\grad z + z = 0\quad\text{in}\ L_2(T),
\end{align}
and we conclude that $z=\div\Div\TTheta$ as follows.
Defining $z^\TTheta:=\div\Div\TTheta$, \eqref{pde_gg_QQ} shows that
\(
   \Grad\grad z^\TTheta = -\TTheta
\)
so that by definition of $z^\TTheta$ and $\traceGG{T}$ (cf.~\eqref{trT_gg}), 
\begin{align*}
   \vdual{\Grad\grad z^\TTheta}{\Grad\grad\deltaz}_T + \vdual{z^\TTheta}{\deltaz}_T
   =
   -\vdual{\TTheta}{\Grad\grad\deltaz}_T + \vdual{\div\Div \TTheta}{\deltaz}_T
   =
   \dual{\traceGG{T}(\deltaz)}{\TTheta}_{\partial T}
\end{align*}
for any $\deltaz\in H^2(T)$. Hence $z^\TTheta=z$ solves \eqref{prob_gg_z}, that is,
$z=\div\Div\TTheta$.
Using this relation and $\Grad\grad z = -\TTheta$, \eqref{prob_gg_QQ} shows that
\begin{align*}
   \dual{\traceGG{T}(z)}{\Deltaq}_{\partial T}
   &=
   \vdual{\div\Div\Deltaq}{z}_T - \vdual{\Deltaq}{\Grad\grad z}_T
   \\
   &=
   \vdual{\div\Div\Deltaq}{\div\Div\TTheta}_T + \vdual{\Deltaq}{\TTheta}_T
   =
   \dual{\bv}{\Deltaq}_{\partial T}
   \quad\forall\Deltaq\in H(\div\Div\!,T),
\end{align*}
so that $\traceGG{T}(z)=\bv$.
Then selecting $\Deltaq=\TTheta$ in \eqref{prob_gg_QQ} and $\deltaz=z$ in \eqref{prob_gg_z}, we obtain
\begin{align} \label{pf_tr_gg_norms_1}
   \|\TTheta\|_{\div\Div\!,T}^2 = \dual{\bv}{\TTheta}_{\partial T} = \|z\|_{2,T}^2.
\end{align}
Since
\[
   \|z\|_{2,T} = \inf\{\|\wilde z\|_{2,T};\; \wilde z\in H^2(T),\ \traceGG{T}(\wilde z)=\bv\}
                        = \|\bv\|_\trggrad{\partial T}
\]
by \eqref{pde_gg_z}, relation \eqref{pf_tr_gg_norms_1} finishes the proof of the norm identity.
The space $\bH^{3/2,1/2}(\partial T)$ is closed as the image of a bounded below operator.
\end{proof}

\subsection{Jumps of $H(\div\Div\!,\cT)$} \label{sec_jump_dd}

\begin{prop} \label{prop_dd_jump}
(i) For $\TTheta\in H(\div\Div\!,\cT)$ it holds
\[
   \TTheta\in H(\div\Div\!,\Omega) \quad\Leftrightarrow\quad
   \dual{\traceGG{}(z)}{\TTheta}_\cS = 0\quad\forall z\in H^2_0(\Omega).
\]
(ii) The identity
\begin{align*}
   \sum_{T\in\cT} \|\bq\|_\trddiv{\partial T}^2 = \|\bq\|_\trddiv{\cS}^2
   \quad\forall \bq\in \bH^{-3/2,-1/2}(\cS)
\end{align*}
holds true.
\end{prop}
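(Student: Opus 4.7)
The plan is to prove (i) first and to deduce (ii) from it via a gluing argument.

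For the forward direction of (i), I would use that for $\TTheta\in H(\div\Div\!,\Omega)$ the elementwise operator $\pwdiv\pwDiv\TTheta$ coincides with the global $\div\Div\TTheta\in L_2(\Omega)$ (this holds for smooth tensors and extends by density under the graph norm). Density of $\cD(\Omega)$ in $H^2_0(\Omega)$ combined with a double integration by parts then yields $\vdual{\div\Div\TTheta}{z} = \vdual{\TTheta}{\Grad\grad z}$ for every $z\in H^2_0(\Omega)$. Summing the elementwise identity \eqref{trT_dd} over $T\in\cT$ and applying \eqref{traces_id} gives
\[
\dual{\traceGG{}(z)}{\TTheta}_\cS = \vdual{\pwdiv\pwDiv\TTheta}{z} - \vdual{\TTheta}{\Grad\grad z} = 0.
\]
For the converse, starting from the orthogonality assumption and the same summed identity, one has $\vdual{\pwdiv\pwDiv\TTheta}{z} = \vdual{\TTheta}{\Grad\grad z}$ for all $z\in H^2_0(\Omega)$. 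Restricting to $z\in\cD(\Omega)$ identifies the distributional $\div\Div\TTheta$ with the $L_2(\Omega)$-function $\pwdiv\pwDiv\TTheta$, so that $\TTheta\in H(\div\Div\!,\Omega)$ via the density characterization recalled in \cite[Prop.~2.1]{AmaraCPC_02_BMM}.

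For (ii), the bound $\sum_T\|\bq_T\|_\trddiv{\partial T}^2 \le \|\bq\|_\trddiv{\cS}^2$ follows immediately by restricting any minimizing global extension to each element. For the opposite inequality I would use gluing: by Lemma~\ref{la_tr_dd_norms}, for every $T\in\cT$ there exists $\TTheta_T\in H(\div\Div\!,T)$ with $\traceDD{T}(\TTheta_T)=\bq_T$ and $\|\TTheta_T\|_{\div\Div\!,T}=\|\bq_T\|_\trddiv{\partial T}$. Setting $\TTheta|_T:=\TTheta_T$ produces an element of $H(\div\Div\!,\cT)$, and to verify $\TTheta\in H(\div\Div\!,\Omega)$ I would invoke (i). Picking any extension $\wilde\TTheta\in H(\div\Div\!,\Omega)$ of $\bq$, the identity \eqref{traces_id} applied elementwise gives
\[
\dual{\traceGG{}(z)}{\TTheta}_\cS = \sum_T \dual{\bq_T}{z}_{\partial T} = \dual{\traceGG{}(z)}{\wilde\TTheta}_\cS = 0 \quad\forall z\in H^2_0(\Omega),
\]
where the last equality is the already-proved forward direction of (i). Part (i) then yields $\TTheta\in H(\div\Div\!,\Omega)$, and by construction $\traceDD{}(\TTheta)=\bq$, so
\[
\|\bq\|_\trddiv{\cS}^2 \le \|\TTheta\|_{\div\Div}^2 = \sum_T \|\TTheta_T\|_{\div\Div\!,T}^2 = \sum_T \|\bq_T\|_\trddiv{\partial T}^2.
\]

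The main obstacle is the converse direction of (i): one must convert a purely weak identity on $H^2_0(\Omega)$ into a membership statement in a space \emph{defined} as the closure of smooth tensors under the graph norm. This conversion relies on the non-trivial density of $\DDs(\overline{\Omega})$ in $\{\TTheta\in\LL_2^s(\Omega);\,\div\Div\TTheta\in L_2(\Omega)\}$. Once (i) is established, (ii) reduces to the gluing construction above, whose only delicate point — that the glued tensor really belongs to the global space — is precisely what (i) supplies.
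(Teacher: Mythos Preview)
Your proposal is correct and follows essentially the same approach as the paper's proof: the forward direction of (i) by direct computation on test functions, the converse by identifying the distributional $\div\Div\TTheta$ with the piecewise one, and (ii) by gluing local minimum-norm extensions and invoking (i) to certify global membership. The one notable difference is that you explicitly flag the density of $\DDs(\overline{\Omega})$ in the graph-norm space $\{\TTheta\in\LL_2^s(\Omega);\,\div\Div\TTheta\in L_2(\Omega)\}$ as the crux of the converse in (i); the paper's proof simply writes ``$\div\Div\TTheta\in L_2(\Omega)$, that is, $\TTheta\in H(\div\Div\!,\Omega)$'' and leaves this step implicit (though the same reference \cite[Proposition~2.1]{AmaraCPC_02_BMM} is mentioned in the introduction for the non-symmetric case).
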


\begin{proof}
The proof of (i) follows the standard procedure, cf.~\cite[Proof of Theorem 2.3]{CarstensenDG_16_BSF}.
For $\TTheta\in H(\div\Div\!,\Omega)$ and $z\in\mathcal{D}(\Omega)$,
\begin{align*}
   \dual{\traceGG{}(z)}{\TTheta}_\cS
   &\overset{\mathrm{def}}=
   \sum_{T\in\cT} \vdual{\div\Div\TTheta}{z}_T - \vdual{\TTheta}{\Grad\grad z}_T
   =
   \vdual{\div\Div\TTheta}{z} - \vdual{\TTheta}{\Grad\grad z} = 0,
\end{align*}
showing the direction ``$\Rightarrow$''.
Now, for given $\TTheta\in H(\div\Div\!,\cT)$ with $\dual{\traceGG{}(z)}{\TTheta}_\cS = 0$ for any $z\in H^2_0(\Omega)$
we have in the distributional sense
\[
   \div\Div\TTheta(z)=\vdual{\TTheta}{\Grad\grad z}=\vdual{\div\Div\TTheta}{z}_\cT - \dual{\traceGG{}(z)}{\TTheta}_\cS
   = \vdual{\div\Div\TTheta}{z}_\cT\quad\forall z\in\mathcal{D}(\Omega).
\]
Therefore, $\div\Div\TTheta\in L_2(\Omega)$, that is, $\TTheta\in H(\div\Div\!,\Omega)$.

Next we show (ii).
The inequality $\sum_{T\in\cT} \|\bq\|_\trddiv{\partial T}^2 \le \|\bq\|_\trddiv{\cS}^2$
holds by definition of the norms. To show the other inequality let
$\bq=(\bq_T)_T\in \bH^{-3/2,-1/2}(\cS)$ be given. By definition of $\bH^{-3/2,-1/2}(\cS)$
there is $\TTheta\in H(\div\Div\!,\Omega)$ such that $\traceDD{}(\TTheta)=\bq$.
Furthermore, for $T\in\cT$, let $\wilde\TTheta_T\in H(\div\Div\!,T)$
be such that $\traceDD{T}(\wilde\TTheta_T)=\bq_T$ and
\[
   \|\bq_T\|_\trddiv{\partial T}
   =
   \|\wilde\TTheta_T\|_{\div\Div\!,T}.
\]
Then, $\wilde\TTheta\in H(\div\Div\!,\cT)$ defined by $\wilde\TTheta|_T:=\wilde\TTheta_T$ ($T\in\cT$) satisfies
(cf.~\eqref{traces_id})
\begin{align*}
   \dual{\traceGG{}(z)}{\wilde\TTheta}_\cS
   &=
   \sum_{T\in\cT} \dual{\traceGG{T}(z)}{\wilde\TTheta_T}_{\partial T}
   =
   \sum_{T\in\cT} \dual{\traceDD{T}(\wilde\TTheta_T)}{z}_{\partial T}
   \\
   &=
   \sum_{T\in\cT}\dual{\traceDD{T}(\TTheta)}{z}_{\partial T}
   =
   \dual{\traceGG{}(z)}{\TTheta}_\cS
   =0\quad\forall z\in H^2_0(\Omega)
\end{align*}
by part (i). Again with (i) we conclude that $\wilde\TTheta\in H(\div\Div\!,\Omega)$.
Therefore,
\begin{align*}
   \sum_{T\in\cT} \|\bq\|_\trddiv{\partial T}^2
   &=
   \sum_{T\in\cT} \|\wilde\TTheta_T\|_{\div\Div\!,T}^2 = \|\wilde\TTheta\|_{\div\Div}^2
   \ge
   \|\bq\|_\trddiv{\cS}^2.
\end{align*}
This finishes the proof.
\end{proof}

By Proposition~\ref{prop_dd_jump}, for given $\bv\in \bH^{3/2,1/2}_{00}(\cS)$,
$\dual{\bv}{\TTheta}_\cS$ defines a functional that only depends on the normal jumps of $\TTheta$ and $\pwDiv\TTheta$
across the element interfaces. It will be denoted as
\begin{align} \label{jumpQQ}
   \jump{(\cdot)\nn,\nn\cdot\pwDiv(\cdot)}:\;
   \left\{\begin{array}{cll}
      H(\div\Div\!,\cT) & \to & \Bigl(\bH^{3/2,1/2}_{00}(\cS)\Bigr)'\\
      \TTheta        & \mapsto & \jump{\TTheta\nn,\nn\cdot\pwDiv\TTheta}(\bv) := \dual{\bv}{\TTheta}_\cS
   \end{array}\right.
\end{align}
with duality pairing defined in \eqref{tr_gg_dual}. This functional defines a semi-norm in $H(\div\Div\!,\cT)$,
\begin{align} \label{seminorm_QQ}
   \|\jump{\TTheta\nn,\nn\cdot\pwDiv\TTheta}\|_{(3/2,1/2,00,\cS)'}
   &:=
   \sup_{0\not=\bv\in \bH^{3/2,1/2}_{00}(\cS)}
   \frac{\dual{\bv}{\TTheta}_\cS}{\|\bv\|_{3/2,1/2,00,\cS}},
   \quad \TTheta\in H(\div\Div\!,\cT).
\end{align}

\begin{prop} \label{prop_dd_trace}
It holds the identity
\[
   \|\bq\|_{-3/2,-1/2,\cS} = \|\bq\|_\trddiv{\cS}
   \quad\forall\bq\in\bH^{-3/2,-1/2}(\cS).
\]
In particular,
\[
   \traceDD{}:\; H(\div\Div\!,\Omega)\to \bH^{-3/2,-1/2}(\cS)
\]
has unit norm and $\bH^{-3/2,-1/2}(\cS)$ is closed.
\end{prop}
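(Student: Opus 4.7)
The plan is to combine the local trace-norm identity of Lemma~\ref{la_tr_dd_norms} with the additivity statement of Proposition~\ref{prop_dd_jump}(ii). Since the dual norm $\|\cdot\|_{-3/2,-1/2,\cS}$ is defined via the product space $H^2(\cT)$ (no inter-element continuity required), one direction is elementary and the other is obtained by a piecewise ``worst case'' test function.

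The easy bound $\|\bq\|_{-3/2,-1/2,\cS}\le\|\bq\|_\trddiv{\cS}$ follows directly from the definitions: for any $\TTheta\in H(\div\Div\!,\Omega)$ with $\traceDD{}(\TTheta)=\bq$ and any $z\in H^2(\cT)$,
\[
   \dual{\bq}{z}_\cS
   =\sum_{T\in\cT}\bigl(\vdual{\div\Div\TTheta}{z}_T - \vdual{\TTheta}{\Grad\grad z}_T\bigr)
   \le \|\TTheta\|_{\div\Div}\,\|z\|_{2,\cT}
\]
by Cauchy--Schwarz; taking the infimum over such $\TTheta$ yields the inequality.

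For the reverse bound, given $\bq=(\bq_T)_T\in\bH^{-3/2,-1/2}(\cS)$ I build a test function piecewise. For each $T\in\cT$, let $z_T\in H^2(T)$ be the unique solution of \eqref{prob_dd_z} with right-hand side $\bq_T$. From the proof of Lemma~\ref{la_tr_dd_norms} (see \eqref{pf_tr_dd_norms_1} and the concluding identity) we have
\[
   \dual{\bq_T}{z_T}_{\partial T}=\|z_T\|_{2,T}^2=\|\bq_T\|_\trddiv{\partial T}^2.
\]
The crucial point is that $z:=(z_T)_T$ lies in the product space $H^2(\cT)$ without any transmission requirement; no continuity of $z$ across element interfaces is needed since $\bq$ arises from the non-conforming side $H(\div\Div\!,\Omega)$. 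Invoking Proposition~\ref{prop_dd_jump}(ii),
\[
   \dual{\bq}{z}_\cS=\sum_{T\in\cT}\|\bq_T\|_\trddiv{\partial T}^2=\|\bq\|_\trddiv{\cS}^2
   \quad\text{and}\quad
   \|z\|_{2,\cT}^2=\sum_{T\in\cT}\|\bq_T\|_\trddiv{\partial T}^2=\|\bq\|_\trddiv{\cS}^2,
\]
so that
\[
   \|\bq\|_{-3/2,-1/2,\cS}\ge \frac{\dual{\bq}{z}_\cS}{\|z\|_{2,\cT}}=\|\bq\|_\trddiv{\cS}.
\]

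For the ``in particular'' statement, the first inequality above shows that $\traceDD{}$ is continuous with norm at most one, while surjectivity onto its range together with the definition of $\|\cdot\|_\trddiv{\cS}$ as a minimum-energy norm shows the norm is exactly one. Closedness of $\bH^{-3/2,-1/2}(\cS)$ in $H^2(\cT)'$ follows because $\traceDD{}$ induces an isometric isomorphism from the Banach space $H(\div\Div\!,\Omega)/\ker\traceDD{}$, endowed with the quotient norm (which coincides with $\|\cdot\|_\trddiv{\cS}$), onto the range endowed with the dual norm $\|\cdot\|_{-3/2,-1/2,\cS}$; hence the range is complete, and therefore closed in $H^2(\cT)'$. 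The only real subtlety in the whole argument is recognizing that the piecewise concatenation of the local maximizers lives in $H^2(\cT)$, which is exactly the point at which the product-space design of the formulation is used.
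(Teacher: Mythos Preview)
Your argument is correct and follows essentially the same route as the paper: both combine the local identity of Lemma~\ref{la_tr_dd_norms} with the additivity in Proposition~\ref{prop_dd_jump}(ii) and the product structure of $H^2(\cT)$. The paper packages the two inequalities into a single chain of equalities via the product-space duality identity $\bigl(\sup_{z\in H^2(\cT)}\sum_T\dual{\bq_T}{z}_{\partial T}/\|z\|_{2,\cT}\bigr)^2=\sum_T\sup_{z\in H^2(T)}\dual{\bq_T}{z}_{\partial T}^2/\|z\|_{2,T}^2$, whereas you make the piecewise maximizer explicit, but the content is the same.
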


\begin{proof}
The norm identity is shown by standard duality arguments in product spaces, 
cf.~\cite[Theorem~2.3]{CarstensenDG_16_BSF}.
Specifically, for $\bq=(\bq_T)_T\in H^{-3/2,-1/2}(\cS)$ we calculate,
by using Proposition~\eqref{prop_dd_jump}(ii) and Lemma~\ref{la_tr_dd_norms},
\begin{align*}
   \|\bq\|_{-3/2,-1/2,\cS}^2
   &=
   \Bigl(\sup_{0\not=z\in H^2(\cT)} \frac{\sum_{T\in\cT}\dual{\bq_T}{z}_{\partial T}}{\|z\|_{2,\cT}}\Bigr)^2
   =
   \sum_{T\in\cT}
   \sup_{0\not=z\in H^2(T)} \frac{\dual{\bq_T}{z}_{\partial T}^2}{\|z\|_{2,T}^2}
   \\
   &=
   \sum_{T\in\cT} \|\bq_T\|_{-3/2,-1/2,\partial T}^2
   =
   \sum_{T\in\cT} \|\bq_T\|_\trddiv{\partial T}^2
   =
   \|\bq\|_\trddiv{\cS}^2.
\end{align*}
The space $\bH^{-3/2,-1/2}(\cS)$ is closed as the image of a bounded below operator.
\end{proof}

\subsection{Traces of $H(\div\Div\!,\cT)$} \label{sec_jump2_dd}

For the discretization of $\traceDD{}(H(\div\Div\!,\Omega))$ we need a characterization of continuity
across the skeleton interfaces $\partial T\in\cS$ that is based on local traces,
rather than testing with $H^2_0(\Omega)$-functions as in Proposition~\ref{prop_dd_jump}.
Therefore, in this section, we assume throughout that the mesh $\cT$ consists of polyhedra
($\di=3$) or polygons ($\di=2$) with possibly curved faces/edges, and that $\cT$ is a conforming
subdivision of $\Omega$ in the sense that the intersection of any two different (closed) elements
is either empty, an entire face ($\di=3$), an entire edge ($\di=2,3$), or a vertex ($\di=2,3$)
of both elements.

\begin{figure}[htb]
  \begin{center}
    \includegraphics{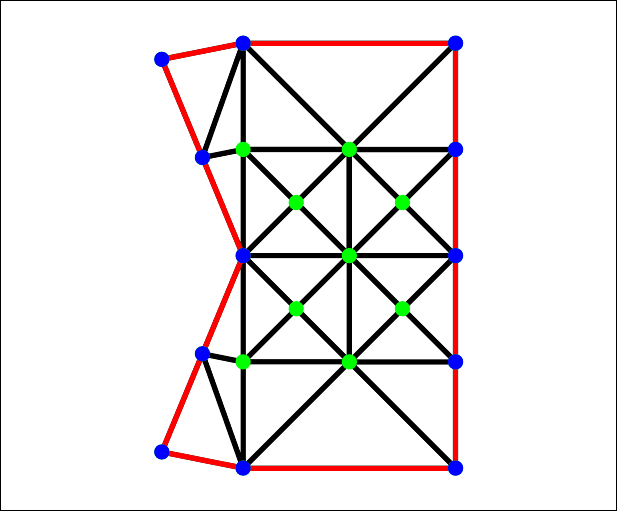}
    \includegraphics{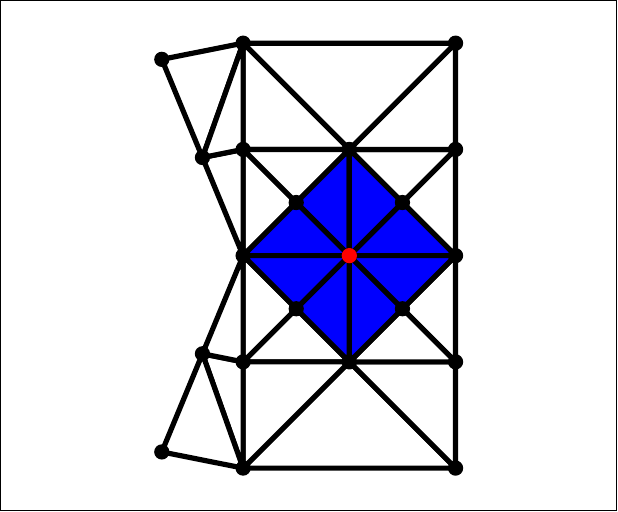}
  \end{center}
  \caption{Notational convention for $d=2$.
  Left: The dots visualize the set of all vertices $\cN$. The dots highlighted in blue indicate boundary vertices.
  Consequently, green dots visualize the set of all interior vertices $\cN_0$.
  Lines between two dots visualize the set of edges $\cE$. Similarly, red lines indicate boundary edges,
  whereas black lines correspond to the set of interior edges $\cE_0$.
  Right: The shaded elements (blue) indicate the patch $\omega(e)$ of an interior node $e$
  that is highlighted (red).}
  \label{fig:NodesEdges}
\end{figure} 

Let us introduce the set $\cE_T$ of faces ($\di=3$) or edges ($\di=2$)
of $T\in\cT$, and define $\cE_0$ to be the set of all faces/edges of $\cT$ that are not subsets of
$\Gamma$. We also need the set $\cN_E$ of edges ($\di=3$) or nodes ($\di=2$) of $E\in\cE_T$, $T\in\cT$, and
the set $\cN_T=\cup_{E\in\cE_T}\cN_E$ of all edges/nodes of an element $T\in\cT$.
The set of all edges/nodes $e\in\cN:=\cup_{T\in\cT}\cN_T$ that are not subsets of
$\Gamma$ is denoted by $\cN_0$, cf. the left side of Figure~\ref{fig:NodesEdges}.
For each $e\in\cN_0$, let $\omega(e)\subset\cT$ be the set (patch) of
elements $T\in\cT$ with $e\subset\overline{T}$, cf. the right side of Figure~\ref{fig:NodesEdges}.
The domain generated by a patch $\omega(e)$ will be denoted by $\omega_e$.
In three space dimensions, for a face $E\in\cE_T$ ($T\in\cT$),
$\nn_E$ denotes the unit normal vector along $\partial E$ that is tangential to $E$.
For an edge $E\in\cE_T$ ($\di=2$), $\nn_E$ indicates the orientation
of $E$, with values $\nn_E(e_1)=-1$ and $\nn_E(e_2)=1$, $e_1,e_2\in\cN_E$ being the starting
and end points of $E$.

We also need the following trace spaces of $H^2(T)$ for $E\in\cE_T$, $T\in\cT$,
\begin{align*}
   H^{3/2}(E) &:= \{z|_E;\; z\in H^2(T)\},\quad
   H^{1/2}(E) := \{(\nn\cdot\grad z)|_E;\; z\in H^2(T)\}
\end{align*}
with canonical trace norms.
Now, to localize the representation of the trace operators $\traceDD{T,\bt}$ (recall \eqref{trace_t}),
instead of the surface divergence operator $\divwo_{\partial T}$, we need the local surface divergence
operator $\divwo_E$ defined, for a sufficiently smooth tangential function $\bphi=\pi_\bt(\bphi)$, by
$\dual{\divwo_E\bphi}{\varphi}_E:=-\dual{\bphi}{\grad_{\partial T}\varphi}_E$
for any $\varphi\in H^1_0(E)$ (with obvious definition of this space).
Defining
\[
   H(\divwo_E,E) := \{\bphi\in\bL_2(E);\; \pi_\bt(\bphi)=\bphi,\; \divwo_E\bphi\in L_2(E)\}
   \quad (E\in\cE_T,\; T\in\cT),
\]
we then have that
\[
   \dual{\bphi}{\grad_{\partial T} z}_E
   =
   -\dual{\divwo_E\bphi}{z}_E + \dual{\nn_E\cdot\bphi}{z}_{\partial E}
   \quad\forall\bphi\in H(\divwo_E,E),\ z\in H^1(E),\ E\in\cE_T,\ T\in\cT.
\]
For an element $T\in\cT$ and a sufficiently smooth function $\TTheta\in H(\div\Div\!,T)$,
we introduce local trace operators (cf.~\eqref{trace_n} and \eqref{trace_t})
\begin{equation} \label{traces_tn}
   \traceDD{T,E,\bt}(\TTheta)
   :=
   \bigl(\nn\cdot\Div\TTheta+\divwo_E\pi_\bt(\TTheta\nn)\bigr)|_E,\quad
   \traceDD{T,E,\nn}(\TTheta)
   :=
   \nn\cdot\TTheta\nn|_E
   \quad (E\in\cE_T)
\end{equation}
and the jump functional
\begin{equation} \label{jjump}
   \jjump{\TTheta}_{\partial T}(z)
   :=
   \sum_{E\in\cE_T}
   \bigl(\dual{\traceDD{T,E,\bt}(\TTheta)}{z}_E - \dual{\traceDD{T,E,\nn}(\TTheta)}{\nn\cdot\grad z}_E\bigr)
   -
   \dual{\traceDD{T}(\TTheta)}{z}_{\partial T}
\end{equation}
for $z\in H^2(T)$.
Of course, for sufficiently smooth $\TTheta$ it holds $\jjump{\TTheta}_{\partial T}\in \bigl(H^2(T)\bigr)'$.
Below we will require that the regularity of $\TTheta$ is such that the traces \eqref{traces_tn} are well defined.
Assuming, again, sufficient regularity of $\TTheta$, integration by
parts shows that the jump functional reduces to
\begin{equation} \label{jjump_loc}
\begin{split}
   &\jjump{\TTheta}_{\partial T}(z)
   =
   \dual{\nn_{E_2}\cdot\pi_\bt(\TTheta\nn|_{E_2}) + \nn_{E_1}\cdot\pi_\bt(\TTheta\nn|_{E_1})}{z}_e,\\
   &\text{for}\ E_1,E_2\in\cE_T:\; E_1\not=E_2,\ e=\overline{E}_1\cap\overline{E}_2\quad
   \text{and}\ z\in H^2(T):\; z|_{\tilde e}=0\ \forall \tilde e\in\cN_E\setminus\{e\}.
\end{split}
\end{equation}
Here, in three dimensions, $\dual{\varphi}{\psi}_e$ is $L_2(e)$-bilinear form (and its extension
by duality) and in two dimensions, $\dual{\varphi}{\psi}_e=\varphi(e)\psi(e)$ is the product
of the point values of $\varphi$ and $\psi$ at the node $e$, and $z|_e=z(e)$.

In order to be able to localize the traces of a function
$\TTheta\in H(\div\Div\!,T)$, according to \eqref{traces_tn}, we need to assume the stronger regularity
$\TTheta\in\cH(\div\Div\!,T)$ where
\[
\begin{split}
   &\cH(\div\Div\!,T):=\\
   &\quad
   \{\TTheta\in H(\div\Div\!,T);\;
      \traceDD{T,E,\bt}(\TTheta)\in \bigl(H^{3/2}(E)\bigr)',\
      \traceDD{T,E,\nn}(\TTheta)\in \bigl(H^{1/2}(E)\bigr)'\ \forall E\in\cE_T\}.
\end{split}
\]
The corresponding product space is denoted by $\cH(\div\Div\!,\cT)$.
Since $\DDs(\overline{T})\subset\cH(\div\Div\!,T)\subset H(\div\Div\!,T)$, the space
$\cH(\div\Div\!,T)$ is dense in $H(\div\Div\!,T)$.

Now we can formulate the main result of this subsection.
\begin{prop} \label{prop_cor_dd_jump}
An element $\TTheta\in \cH(\div\Div\!,\cT)$ satisfies $\TTheta\in H(\div\Div\!,\Omega)$ if and only if
\begin{equation} \label{cont}
\begin{split}
   &\traceDD{T_1,E,\nn}(\TTheta) + \traceDD{T_2,E,\nn}(\TTheta) = 0,\quad
   \traceDD{T_1,E,\bt}(\TTheta) + \traceDD{T_2,E,\bt}(\TTheta) = 0\\
   &\qquad\qquad\forall E\in\cE_0\ \text{and}\ T_1,T_2\in\cT:\ T_1\not=T_2,\ \{E\}=\cE_{T_1}\cap\cE_{T_2}\\
   \text{and}\quad
   &\sum_{T\in\omega(e)}\jjump{\TTheta}_{\partial T}(z) = 0
   \quad\forall z\in H^2_0(\omega_e),\;\forall e\in\cN_0.
\end{split}
\end{equation}
\end{prop}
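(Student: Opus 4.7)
My plan is to invoke Proposition~\ref{prop_dd_jump}(i), which characterises $\TTheta\in H(\div\Div\!,\Omega)$ by the vanishing of $\dual{\traceGG{}(z)}{\TTheta}_\cS$ for all $z\in H^2_0(\Omega)$. Using \eqref{traces_id} this is the same as
\[
   \sum_{T\in\cT}\dual{\traceDD{T}(\TTheta)}{z}_{\partial T}=0\quad\forall z\in H^2_0(\Omega).
\]
Since $\TTheta\in\cH(\div\Div\!,\cT)$, the local trace functionals $\traceDD{T,E,\nn}(\TTheta)$ and $\traceDD{T,E,\bt}(\TTheta)$ are well defined, and the definition \eqref{jjump} yields
\[
   \dual{\traceDD{T}(\TTheta)}{z}_{\partial T}
   =\sum_{E\in\cE_T}\Bigl(\dual{\traceDD{T,E,\bt}(\TTheta)}{z}_E
   -\dual{\traceDD{T,E,\nn}(\TTheta)}{\nn\cdot\grad z}_E\Bigr)
   -\jjump{\TTheta}_{\partial T}(z).
\]
This rewrites the preceding global identity as a sum of face/edge dualities over $\cE$ plus a sum of node/edge jumps $\jjump{\TTheta}_{\partial T}$.

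Next I would reorganise both sums. For the face/edge part, each $E\in\cE_0$ is shared by exactly two elements $T_1,T_2\in\cT$, so after grouping one obtains, for every $z\in H^2_0(\Omega)$ (whose boundary traces on $\Gamma$ vanish so that $E\subset\Gamma$ contributes nothing),
\[
   \sum_{E\in\cE_0}\Bigl(\dual{\traceDD{T_1,E,\bt}(\TTheta)+\traceDD{T_2,E,\bt}(\TTheta)}{z}_E
   -\dual{\traceDD{T_1,E,\nn}(\TTheta)+\traceDD{T_2,E,\nn}(\TTheta)}{\nn\cdot\grad z}_E\Bigr),
\]
with the signs in the sums coming from the opposite orientations of the element normals (the same sign convention is implicit in \eqref{traces_tn}). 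For the jump part, \eqref{jjump_loc} shows that $\jjump{\TTheta}_{\partial T}$ is supported on $\cN_T$; grouping the element-wise contributions around each $e\in\cN$ and using that $z$ and $\nn\cdot\grad z$ vanish on $\Gamma$ (hence at every $e\subset\Gamma$) reduces the jump contribution to $\sum_{e\in\cN_0}\sum_{T\in\omega(e)}\jjump{\TTheta}_{\partial T}(z)$. Overall,
\[
   \dual{\traceGG{}(z)}{\TTheta}_\cS
   =\sum_{E\in\cE_0}(\cdots)-\sum_{e\in\cN_0}\sum_{T\in\omega(e)}\jjump{\TTheta}_{\partial T}(z)
   \quad\forall z\in H^2_0(\Omega).
\]

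The implication ``$\Leftarrow$'' is then immediate: if \eqref{cont} holds both collected sums vanish, so by Proposition~\ref{prop_dd_jump}(i), $\TTheta\in H(\div\Div\!,\Omega)$. For ``$\Rightarrow$'' I would argue by localisation. To isolate the face/edge identities, fix $E\in\cE_0$ and construct a bump $z\in H^2_0(\Omega)\cap C^\infty_c(\omega)$ where $\omega$ is a small tubular neighbourhood of $E$ avoiding all $\tilde e\in\cN\setminus\cN_E$ and vanishing to second order at every $e\in\cN_E$ (so all jump functionals $\jjump{\TTheta}_{\partial T}(z)$ vanish); by choosing the traces of $z$ and $\nn\cdot\grad z$ on $E$ independently — first with $\nn\cdot\grad z|_E=0$ and $z|_E$ ranging over a dense subset of $H^{3/2}(E)$, then with $z|_E=0$ and $\nn\cdot\grad z|_E$ dense in $H^{1/2}(E)$ — one peels off the tangential and normal continuity conditions at $E$. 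Once these hold for every $E\in\cE_0$, the face/edge sum vanishes and the remaining identity reads $\sum_{e\in\cN_0}\sum_{T\in\omega(e)}\jjump{\TTheta}_{\partial T}(z)=0$ for all $z\in H^2_0(\Omega)$. Restricting to $z\in H^2_0(\omega_e)$ (extended by zero), which kills all patch contributions except the one at $e$, produces the final node/edge condition in \eqref{cont}.

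The main technical obstacle will be the construction of suitable localised test functions: I need $H^2_0(\Omega)$-bumps whose traces on a chosen $E\in\cE_0$ realise prescribed data while simultaneously vanishing to appropriate order at the (lower-dimensional) node/edge set $\cN_E$, in order to decouple the two strands of \eqref{cont}. In two dimensions this is elementary via tensor products of one-dimensional $C^\infty_c$-bumps on a reference curved polygon; in three dimensions it requires a careful choice so that the supports of the test function and the normal-derivative data avoid codimension-two edges of $\cN$. I would handle curved elements by pulling back to a reference element via a $C^{1,1}$ chart, using that $\cT$ consists of generalised (curved) polyhedra with faces of the same regularity, so that $H^2_0$-bumps and their traces transform correctly.
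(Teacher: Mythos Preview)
Your approach is essentially the same as the paper's: both proofs rewrite $\dual{\traceGG{}(z)}{\TTheta}_\cS$ via \eqref{jjump} as a sum of face/edge dualities plus the jump functionals $\jjump{\TTheta}_{\partial T}$, and then invoke Proposition~\ref{prop_dd_jump}(i). The paper's own proof is extremely brief at the final step (it literally writes ``One sees that the right-hand side vanishes for any $z\in H^2_0(\Omega)$ if and only if \eqref{cont} is satisfied''), so your expansion with explicit localisation arguments for ``$\Rightarrow$'' goes well beyond what the paper supplies.

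There is, however, one slip in your reorganisation. The displayed identity
\[
   \dual{\traceGG{}(z)}{\TTheta}_\cS
   =\sum_{E\in\cE_0}(\cdots)-\sum_{e\in\cN_0}\sum_{T\in\omega(e)}\jjump{\TTheta}_{\partial T}(z)
\]
is not correct as written: the double sum over-counts, since each $T$ lies in $\omega(e)$ for every $e\in\cN_T\cap\cN_0$. What you appear to have in mind is a decomposition of $\jjump{\TTheta}_{\partial T}$ into node-wise pieces, but \eqref{jjump_loc} only furnishes this for \emph{sufficiently smooth} $\TTheta$, not for general $\TTheta\in\cH(\div\Div\!,\cT)$. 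This does not affect your ``$\Rightarrow$'' argument, which correctly works with $\sum_{T\in\cT}\jjump{\TTheta}_{\partial T}(z)$ and then restricts to $z\in H^2_0(\omega_e)$ extended by zero. For ``$\Leftarrow$'', though, you need to pass from the patch conditions in \eqref{cont} back to vanishing of $\sum_{T\in\cT}\jjump{\TTheta}_{\partial T}(z)$ for arbitrary $z\in H^2_0(\Omega)$; this requires either a $C^2$ partition of unity subordinate to the node patches $\{\omega_e\}_{e\in\cN_0}$ (and a word on elements all of whose nodes lie on $\Gamma$), or the node-wise splitting of the jump functional justified by a density argument. Either route is short, but ``immediate'' oversells it.
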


\begin{proof}
For sufficiently smooth $\TTheta_T\in H(\div\Div\!,T)$ and $z\in H^2(T)$ ($T\in\cT$) we find
with \eqref{traces_id} and \eqref{jjump} that
\begin{align} \label{pf_cor_prop}
\lefteqn{
   \dual{\traceGG{T}(z)}{\TTheta_T}_{\partial T}
   =
   \dual{\traceDD{T}(\TTheta_T)}{z}_{\partial T}
}
   \nonumber\\
   &=
   \sum_{E\in\cE_T}
   \bigl(
   \dual{\traceDD{T,E,\bt}(\TTheta_T)}{z}_E
   -
   \dual{\traceDD{T,E,\nn}(\TTheta_T)}{\nn\cdot\grad z}_E
   \bigr)
   -
   \jjump{\TTheta_T}_{\partial T}(z).
\end{align}
All terms can be interpreted as linear functionals depending on $z$ and acting on $\TTheta_T$.
Boundedness is guaranteed for $\TTheta_T\in\cH(\div\Div\!,T)$.
Therefore, relation \eqref{pf_cor_prop} extends by continuity to $\TTheta\in\cH(\div\Div\!,\cT)$.
Considering $z\in H^2_0(\Omega)$ and summing over all elements $T\in\cT$ yields
\begin{align*}
   \dual{\traceGG{}(z)}{\TTheta}_\cS
   =
   \sum_{T\in\cT}\sum_{E\in\cE_T}
   \bigl(
   \dual{\traceDD{T,E,\bt}(\TTheta_T)}{z}_E
   -
   \dual{\traceDD{T,E,\nn}(\TTheta_T)}{\nn\cdot\grad z}_E
   \bigr)
   -
   \sum_{T\in\cT} \jjump{\TTheta_T}_{\partial T}(z).
\end{align*}
One sees that the right-hand side vanishes for any $z\in H^2_0(\Omega)$ if and only if
\eqref{cont} is satisfied. Therefore, the statement follows by Proposition~\ref{prop_dd_jump}.
\end{proof}

\begin{remark} \label{rem:traceOp2D}
In two dimensions ($\di=2$) the trace operators \eqref{traces_tn} are, for $\TTheta\in\cH(\div\Div\!,T)$
and $T\in\cT$,
\begin{equation} \label{traces_tn_2d}
   \traceDD{T,E,\bt}(\TTheta)
   =
   \bigl(\nn\cdot\Div\TTheta+\partial_\bt(\bt\cdot\TTheta\nn)\bigr)|_E,\quad
   \traceDD{T,E,\nn}(\TTheta)
   =
   \nn\cdot\TTheta\nn|_E
   \quad (E\in\cE_T).
\end{equation}
Here, $\partial_\bt$ indicates the positively oriented tangential derivative along $E$.
The localized jump functional $\jjump{\TTheta}_{\partial T}$ (cf.~\eqref{jjump_loc})
reduces to jump values at vertices,
\begin{equation} \label{jjump_node}
\begin{split}
   \jjump{\TTheta}_{\partial T}(e):\;z\mapsto
   \jjump{\TTheta}_{\partial T}(z)
   &=
   \dual{\nn_{E_2}\cdot\pi_\bt(\TTheta\nn|_{E_2}) + \nn_{E_1}\cdot\pi_\bt(\TTheta\nn|_{E_1})}{z}_e\\
   &=
   \Bigl((\bt\cdot\TTheta\nn|_{E_2})(e)- (\bt\cdot\TTheta\nn|_{E_1})(e)\Bigr) z(e)
   \quad\forall z\in H^2_0(\omega_e)
\end{split}
\end{equation}
where $E_1,E_2\in\cE_T$ are chosen in such a way that $e$ is the endpoint of $E_2$ and starting point
of $E_1$, that is, in our previous notation, $\nn_{E_1}(e)=-1$, $\nn_{E_2}(e)=1$.
\end{remark}

\subsection{Jumps of $H^2(\cT)$} \label{sec_jump_gg}

\begin{prop} \label{prop_gg_jump}
(i) For $z\in H^2(\cT)$ the following equivalence holds,
\[
   z\in H^2_0(\Omega)\quad\Leftrightarrow\quad
   \dual{\bq}{z}_\cS=0 \quad\forall\bq\in \bH^{-3/2,-1/2}(\cS).
\]
(ii) The identity
\begin{align*}
   \sum_{T\in\cT} \|\bv\|_\trggrad{\partial T}^2 = \|\bv\|_\trggrad{0,\cS}^2
   \quad\forall \bv\in \bH^{3/2,1/2}_{00}(\cS)
\end{align*}
holds true.
\end{prop}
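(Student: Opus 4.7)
The plan is to mirror Proposition~\ref{prop_dd_jump}, with the roles of $H(\div\Div\!,\Omega)$ and $H^2_0(\Omega)$ swapped throughout.

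For part (i), the forward implication $z \in H^2_0(\Omega) \Rightarrow \dual{\bq}{z}_\cS = 0$ is the easy one: writing $\bq = \traceDD{}(\TTheta)$ with $\TTheta \in H(\div\Div\!,\Omega)$, the element-wise sum in \eqref{tr_dd_dual} collapses via \eqref{trT_dd} to $\vdual{\div\Div\TTheta}{z}_\Omega - \vdual{\TTheta}{\Grad\grad z}_\Omega$, which vanishes first on $\mathcal{D}(\Omega)$ by the distributional definition of $\div\Div$ and then on all of $H^2_0(\Omega)$ by density and continuity. The converse is where the work lies. I start with $\TTheta \in \DDs(\overline{\Omega}) \subset H(\div\Div\!,\Omega)$, which is classical enough for element-wise integration by parts, and rewrite the hypothesis as
\[
0 = \sum_{T\in\cT} \bigl(\dual{\nn\cdot\Div\TTheta}{z}_{\partial T} - \dual{\TTheta\nn}{\grad z}_{\partial T}\bigr).
\]
Restricting first to $\TTheta \in \DDs(\Omega)$ (compactly supported), interior contributions regroup pairwise into integrals over interior faces of the single-valued traces of $\nn\cdot\Div\TTheta$ and $\TTheta\nn$ tested against the jumps $[z]$ and $[\grad z]$. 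Varying $\TTheta$ forces both jumps to vanish on every interior face, yielding $z \in H^2(\Omega)$. Returning then to $\TTheta \in \DDs(\overline{\Omega})$, interior contributions cancel and only a boundary identity remains:
\[
\dual{\nn\cdot\Div\TTheta}{z}_\Gamma - \dual{\TTheta\nn}{\grad z}_\Gamma = 0\quad\forall \TTheta \in \DDs(\overline{\Omega}).
\]
Choosing $\TTheta$ successively to isolate $\nn\cdot\Div\TTheta|_\Gamma$, $\nn\cdot\TTheta\nn|_\Gamma$, and $\pi_\bt(\TTheta\nn)|_\Gamma$ (the counterpart on $\Gamma$ of the decomposition in Remark~\ref{rem_traceDD}) will deliver $z|_\Gamma = 0$ and then $\nn\cdot\grad z|_\Gamma = 0$, i.e., $z \in H^2_0(\Omega)$.

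For part (ii), the inequality $\sum_T \|\bv\|_\trggrad{\partial T}^2 \le \|\bv\|_\trggrad{0,\cS}^2$ is immediate, since any global $H^2_0(\Omega)$-extension of $\bv$ restricts element-wise to a competitor in the local infimum. For the reverse, my plan is to assemble a global $H^2_0$-extension from the local minimum-energy extensions. Given $\bv = (\bv_T)_T \in \bH^{3/2,1/2}_{00}(\cS)$, I pick element-wise minimizers $\wilde z_T \in H^2(T)$ with $\traceGG{T}(\wilde z_T) = \bv_T$ and $\|\wilde z_T\|_{2,T} = \|\bv_T\|_\trggrad{\partial T}$ (available by Lemma~\ref{la_tr_gg_norms}), set $\wilde z := (\wilde z_T)_T \in H^2(\cT)$, and check $\wilde z \in H^2_0(\Omega)$ via part (i). The key identity, for arbitrary $\TTheta \in H(\div\Div\!,\Omega)$ and any preimage $z \in H^2_0(\Omega)$ of $\bv$ under $\traceGG{}$, is (using \eqref{traces_id} and \eqref{tr_gg_dual})
\[
\dual{\traceDD{}(\TTheta)}{\wilde z}_\cS = \sum_T \dual{\traceGG{T}(\wilde z_T)}{\TTheta}_{\partial T} = \sum_T \dual{\bv_T}{\TTheta}_{\partial T} = \dual{\traceGG{}(z)}{\TTheta}_\cS,
\]
which vanishes by Proposition~\ref{prop_dd_jump}(i). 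Part (i) then gives $\wilde z \in H^2_0(\Omega)$, and $\|\bv\|_\trggrad{0,\cS} \le \|\wilde z\|_2 = \bigl(\sum_T \|\bv_T\|_\trggrad{\partial T}^2\bigr)^{1/2}$ closes the argument.

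I expect the main obstacle to be the boundary step of part (i): extracting two independent scalar conditions ($z|_\Gamma = 0$ and $\nn\cdot\grad z|_\Gamma = 0$) from the single scalar identity on $\Gamma$ requires a global version on $\Gamma$ of the trace decomposition of Remark~\ref{rem_traceDD}, together with enough surjectivity of $\DDs(\overline{\Omega})$-traces to separate the normal and tangential contributions. Once this boundary-trace piece is in place, the rest is essentially bookkeeping on top of Proposition~\ref{prop_dd_jump}.
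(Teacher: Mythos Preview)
Your Part~(ii) and the forward implication of Part~(i) match the paper essentially verbatim. The difference lies in the reverse implication of Part~(i), where you and the paper take genuinely different routes.

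For interior regularity ($z\in H^2(\Omega)$), you argue via face-by-face jump cancellation: regroup the boundary integrals over interior faces and vary $\TTheta\in\DDs(\Omega)$ to kill $[z]$ and $[\grad z]$ separately. This presupposes a conforming face structure and enough independent control of $\nn\cdot\Div\TTheta|_F$ and $\TTheta\nn|_F$ on each face---workable for polyhedral meshes but outside the paper's standing assumption of \emph{general Lipschitz elements}. The paper instead uses a one-line distributional argument: for $\PPhi\in\DDs(\Omega)$, the hypothesis gives $\Grad\grad z(\PPhi)=\vdual{\Grad\grad z}{\PPhi}_\cT$, so $\Grad\grad z\in\LL_2^s(\Omega)$ directly, with no mesh geometry entering.

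For the boundary conditions, you aim to separate $z|_\Gamma$ and $\nn\cdot\grad z|_\Gamma$ by exploiting surjectivity of the component traces of $\DDs(\overline\Omega)$ on $\Gamma$. You are right to flag this as the hard step: on a merely Lipschitz boundary the normal is only $L^\infty$, so isolating $\nn\cdot\TTheta\nn$, $\pi_\bt(\TTheta\nn)$, and $\nn\cdot\Div\TTheta$ with independent dense ranges is delicate. The paper sidesteps this completely. It uses the identity $0=\vdual{\div\Div\PPhi}{z}-\vdual{\PPhi}{\Grad\grad z}$ for \emph{all} $\PPhi\in H(\div\Div\!,\Omega)$ and then manufactures test tensors $\PPhi=\Grad\grad w$ with $w\in H^2(\Omega)$ solving auxiliary variational problems whose right-hand sides are boundary pairings $-\dual{\varphi}{\deltaz}_\Gamma$ or $-\dual{\bvarphi}{\grad\deltaz}_\Gamma$. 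Plugging back yields $\dual{\varphi}{z}_\Gamma=0$ and $\dual{\bvarphi}{\grad z}_\Gamma=0$ for arbitrary $\varphi$, $\bvarphi$, hence $z|_\Gamma=0$ and $\grad z|_\Gamma=0$. This trick trades your trace-surjectivity lemma for well-posedness of a benign $H^2$ problem, and works on Lipschitz domains without further ado.
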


\begin{proof}
Some parts of this proof are almost identical to the proof of Proposition~\ref{prop_dd_jump}.
However, in part (i) of the present case, we additionally have to guarantee boundary conditions
in $H^2_0(\Omega)$ whereas previously they were not considered.

Let us start showing statement (i).
For $z\in\mathcal{D}(\Omega)$, $\bq\in\bH^{-3/2,-1/2}(\cS)$ and $\TTheta\in H(\div\Div\!,\Omega)$
with $\bq=\traceDD{}(\TTheta)$ we obtain
\begin{align*}
   \dual{\bq}{z}_\cS
   &\overset{\mathrm{def}}=
   \sum_{T\in\cT} \vdual{\div\Div\TTheta}{z}_T - \vdual{\TTheta}{\Grad\grad z}_T
   \\
   &=
   \vdual{\div\Div\TTheta}{z} - \vdual{\TTheta}{\Grad\grad z}
   =
   \vdual{\TTheta}{\Grad\grad z} - \vdual{\TTheta}{\Grad\grad z} = 0.
\end{align*}
By density, this holds for any $z\in H^2_0(\Omega)$.

We show the other direction ``$\Leftarrow$''.
For given $z\in H^2(\cT)$ with $\dual{\bq}{z}_\cS=0$ for any $\bq\in \bH^{-3/2,-1/2}(\cS)$,
the distribution $\Grad\grad z$ satisfies
\begin{align*}
   \Grad\grad z(\PPhi) = \vdual{z}{\div\Div\PPhi}
   =
   \vdual{\Grad\grad z}{\PPhi}_\cT + \dual{\traceDD{}(\PPhi)}{z}_\cS
   = \vdual{\Grad\grad z}{\PPhi}_\cT\quad\forall\PPhi\in\DDs(\Omega),
\end{align*}
that is, $\Grad\grad z\in \LL_2^s(\Omega)$. Therefore, $z\in H^2(\Omega)$.
Furthermore,
\begin{align} \label{z_trace}
   0 = \dual{\traceDD{}(\PPhi)}{z}_\cS
   &=
   \vdual{\div\Div\PPhi}{z} - \vdual{\PPhi}{\pwGrad\pwgrad z}
   =
   \vdual{\div\Div\PPhi}{z} - \vdual{\PPhi}{\Grad\grad z}
\end{align}
for any $\PPhi\in H(\div\Div\!,\Omega)$ shows that $z\in H^2_0(\Omega)$, as can be seen as follows.
To show $z\in H^1_0(\Omega)$ we select $\PPhi:=\Grad\grad w$ with 
$w\in H^2(\Omega)$ solving, for given $\varphi\in H^2(\Omega)|_\Gamma$,
\[
   \vdual{\Grad\grad w}{\Grad\grad \deltaz} + \vdual{w}{\deltaz} = -\dual{\varphi}{\deltaz}_\Gamma
   \quad\forall\deltaz\in H^2(\Omega).
\]
Then, $\div\Div\PPhi=-w$ and \eqref{z_trace} shows that
\[
   \vdual{\div\Div\PPhi}{z} - \vdual{\PPhi}{\Grad\grad z}
   =
   \dual{\varphi}{z}_\Gamma
   = 0.
\]
Since $\varphi\in H^2(\Omega)|_\Gamma$ was arbitrary, this yields $z|_\Gamma=0$.
Analogously, $\grad z|_\Gamma=0$ follows by selecting $\PPhi:=\Grad\grad w$ with
\[
   \vdual{\Grad\grad w}{\Grad\grad \deltaz} + \vdual{w}{\deltaz} = -\dual{\bvarphi}{\grad\deltaz}_\Gamma
   \quad\forall\deltaz\in H^2(\Omega)
\]
for arbitrary $\bvarphi\in \bH^1(\Omega)|_\Gamma$.

Next we show (ii).
The bound $\sum_{T\in\cT} \|\bv\|_\trggrad{\partial T}^2 \le \|\bv\|_\trggrad{0,\cS}^2$
holds by definition of the norms. To show the other inequality let
$\bv=(\bv_T)_T\in \bH^{3/2,1/2}_{00}(\cS)$ be given. There is $z\in H^2_0(\Omega)$ such that $\traceGG{}(z)=\bv$.
Furthermore, for $T\in\cT$, let $\wilde z_T\in H^2(T)$ be such that $\traceGG{T}(\wilde z_T)=\bv_T$ and
\(
   \|\bv_T\|_\trggrad{\partial T}
   =
   \|\wilde z_T\|_{2,T}.
\)
Then, $\wilde z$ defined by $\wilde z|_T:=\wilde z_T$ ($T\in\cT$) satisfies $\wilde z\in H^2(\cT)$,
and for any $\TTheta\in H(\div\Div\!,\Omega)$ it holds (cf.~\eqref{traces_id})
\begin{align*}
   \dual{\traceDD{}(\TTheta)}{\wilde z}_\cS
   &=
   \sum_{T\in\cT} \dual{\traceDD{T}(\TTheta)}{\wilde z_T}_{\partial T}
   =
   \sum_{T\in\cT} \dual{\traceGG{T}(\wilde z_T)}{\TTheta}_{\partial T}
   \\
   &=
   \sum_{T\in\cT}\dual{\traceGG{T}(z)}{\TTheta}_{\partial T}
   =
   \dual{\traceDD{}(\TTheta)}{z}_\cS
   =0
\end{align*}
by part (i). Again with (i) we conclude that $\wilde z\in H^2_0(\Omega)$.
Therefore,
\begin{align*}
   \sum_{T\in\cT} \|\bv|_{\partial T}\|_\trggrad{\partial T}^2
   &=
   \sum_{T\in\cT} \|\wilde z_T\|_{2,T}^2 = \|\wilde z\|_2^2
   \ge
   \|\bv\|_\trggrad{0,\cS}^2.
\end{align*}
This finishes the proof.
\end{proof}

By Proposition~\ref{prop_gg_jump}(i), for given $\bq\in \bH^{-3/2,-1/2}(\cS)$,
$\dual{\bq}{z}_\cS$ defines a functional that only depends on the jumps of $z$ and $\pwgrad z$
across the element interfaces and their traces on $\Gamma$. It will be denoted as
\begin{align} \label{jumpz}
   \jump{\cdot,\pwgrad\,\cdot}:\;
   \left\{\begin{array}{cll}
      H^2(\cT) & \to & \Bigl(\bH^{-3/2,-1/2}(\cS)\Bigr)'\\
      z        & \mapsto & \jump{z,\pwgrad z}(\bq) := \dual{\bq}{z}_\cS
   \end{array}\right.
\end{align}
with duality pairing defined in \eqref{tr_dd_dual}.
As before, this functional defines a semi-norm in $H^2(\cT)$,
\begin{align} \label{seminorm_z}
   \|\jump{z,\pwgrad z}\|_{(-3/2,-1/2,\cS)'}
   &:=
   \sup_{0\not=\bq\in \bH^{-3/2,-1/2}(\cS)}
   \frac{\dual{\bq}{z}_\cS}{\|\bq\|_{-3/2,-1/2,\cS}},
   \quad z\in H^2(\cT).
\end{align}

\begin{prop} \label{prop_gg_trace}
It holds the identity
\[
   \|\bv\|_{3/2,1/2,00,\cS} = \|\bv\|_\trggrad{0,\cS}
   \quad\forall\bv\in\bH^{3/2,1/2}_{00}(\cS).
\]
In particular,
\[
   \traceGG{}:\; H^2_0(\Omega)\to \bH^{3/2,1/2}_{00}(\cS)
\]
has unit norm and $\bH^{3/2,1/2}_{00}(\cS)$ is closed.
\end{prop}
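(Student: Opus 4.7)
The plan is to follow exactly the pattern of Proposition~\ref{prop_dd_trace}, which is itself a duality/product-space argument. The key observation is that $H(\div\Div\!,\cT)$ is a product space, so the supremum defining $\|\bv\|_{3/2,1/2,00,\cS}$ decouples element by element; combining this with the local norm identity (Lemma~\ref{la_tr_gg_norms}) and the additivity of the minimum-energy extension norm (Proposition~\ref{prop_gg_jump}(ii)) yields the claim.

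First I would take $\bv = (\bv_T)_T \in \bH^{3/2,1/2}_{00}(\cS)$ and compute
\[
   \|\bv\|_{3/2,1/2,00,\cS}^2
   =
   \Bigl(\sup_{0\neq \TTheta\in H(\div\Div\!,\cT)}
          \frac{\sum_{T\in\cT}\dual{\bv_T}{\TTheta|_T}_{\partial T}}{\|\TTheta\|_{\div\Div\!,\cT}}\Bigr)^2
   =
   \sum_{T\in\cT}\sup_{0\neq \TTheta_T\in H(\div\Div\!,T)}
   \frac{\dual{\bv_T}{\TTheta_T}_{\partial T}^2}{\|\TTheta_T\|_{\div\Div\!,T}^2},
\]
where the decoupling uses that the test space $H(\div\Div\!,\cT)$ factors as a product and that, by Cauchy--Schwarz (and selecting optimal elementwise scalings), the optimal $\TTheta$ can be chosen elementwise. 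The localized suprema are, by definition, $\|\bv_T\|_{3/2,1/2,\partial T}^2$, and Lemma~\ref{la_tr_gg_norms} identifies each of them with $\|\bv_T\|_{\trggrad{\partial T}}^2$. Finally, summing and invoking Proposition~\ref{prop_gg_jump}(ii) gives $\sum_{T\in\cT}\|\bv_T\|_{\trggrad{\partial T}}^2 = \|\bv\|_{\trggrad{0,\cS}}^2$, which yields the norm identity.

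For the ``in particular'' statement, the boundedness $\|\traceGG{}(z)\|_{3/2,1/2,00,\cS} \le \|z\|_2$ is immediate from the duality definition and the bound $\dual{\traceGG{}(z)}{\TTheta}_\cS \le \|z\|_2\|\TTheta\|_{\div\Div\!,\cT}$ obtained by applying Cauchy--Schwarz elementwise to \eqref{trT_gg}. Conversely, by definition of the minimum-energy extension norm $\|\cdot\|_{\trggrad{0,\cS}}$, for any $z\in H^2_0(\Omega)$ with $\traceGG{}(z)=\bv$ we have $\|\bv\|_{\trggrad{0,\cS}}\le \|z\|_2$; the norm identity then shows $\|\traceGG{}\|=1$, and a bounded-below operator has closed range, so $\bH^{3/2,1/2}_{00}(\cS)$ is closed.

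The only potential obstacle is justifying the swap between the supremum over $\TTheta\in H(\div\Div\!,\cT)$ and the sum of elementwise suprema. This however is routine for product spaces equipped with the Pythagorean norm $\|\TTheta\|_{\div\Div\!,\cT}^2 = \sum_T \|\TTheta|_T\|_{\div\Div\!,T}^2$: one direction is immediate by restricting test tensors to be supported on single elements, and the other follows by gluing optimizers (with optimal $\ell^2$-weights) across elements, exactly as in the argument for Proposition~\ref{prop_dd_trace}. Thus no new idea beyond the preceding subsections is required.
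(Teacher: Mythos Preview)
Your proposal is correct and follows essentially the same approach as the paper: decouple the supremum over the product test space $H(\div\Div\!,\cT)$ into elementwise suprema, apply Lemma~\ref{la_tr_gg_norms} locally, and recombine via Proposition~\ref{prop_gg_jump}(ii). The paper's own proof is in fact slightly terser but identical in substance, including the closing remark that closedness follows from the operator being bounded below.
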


\begin{proof}
The norm identity is obtained as in the proof of Proposition~\ref{prop_dd_trace}.
For $\bv=(\bv_T)_T\in\bH^{3/2,1/2}_{00}(\cS)$ we use Proposition~\eqref{prop_gg_jump}(ii)
and Lemma~\ref{la_tr_gg_norms} to calculate
\begin{align*}
   \|\bv\|_{3/2,1/2,00,\cS}^2
   &=
   \Bigl(\sup_{0\not=\TTheta\in H(\div\Div\!,\cT)}
      \frac{\sum_{T\in\cT}\dual{\bv_T}{\TTheta}_{\partial T}}{\|\TTheta\|_{\div\Div\!,\cT}}\Bigr)^2
   =
   \sum_{T\in\cT}
   \sup_{0\not=\TTheta\in H(\div\Div\!,T)} \frac{\dual{\bv_T}{\TTheta}_{\partial T}^2}{\|\TTheta\|_{\div\Div\!,T}^2}
   \\
   &=
   \sum_{T\in\cT} \|\bv_T\|_{3/2,1/2,00,\partial T}^2
   =
   \sum_{T\in\cT} \|\bv_T\|_\trggrad{\partial T}^2
   =
   \|\bv\|_\trggrad{0,\cS}^2.
\end{align*}
The space $\bH^{3/2,1/2}_{00}(\cS)$ is closed as the image of a bounded below operator.
\end{proof}

\subsection{A Poincar\'e inequality in $H^2(\cT)$} \label{sec_gg_Poincare}

The definition of $\cC$ implies that it induces a self-adjoint isomorphism
$\LL_2^s(\Omega)\to\LL_2^s(\Omega)$. This fact will be used in the following.

Let us define a projection operator $\bP:\;\LL_2^s(\Omega)\to\cC\Grad\grad H^2_0(\Omega)$ by
\begin{align} \label{proj_P}
   \vdual{\bP(\TTheta)}{\cC\Grad\grad\deltaz}
   = \vdual{\TTheta}{\cC\Grad\grad\deltaz}\quad\forall\deltaz\in H^2_0(\Omega).
\end{align}
There is a mapping $\LL_2^s(\Omega)\ni\TTheta\mapsto \xi=\xi(\TTheta)\in H^2_0(\Omega)$ with
\begin{align} \label{proj_xi}
   \vdual{\Grad\grad\xi}{\cC\Grad\grad\deltaz} = \vdual{\TTheta}{\cC\Grad\grad\deltaz}
   \quad\forall\deltaz\in H^2_0(\Omega).
\end{align}
In other words,
\begin{align} \label{Pxi}
   \bP(\TTheta)=\Grad\grad \xi(\TTheta),\quad
   \div\Div\cC(\Grad\grad\xi-\TTheta)=0,\quad\text{and}\quad
   \cC\bigl(\Grad\grad\xi-\TTheta\bigr) \in H(\div\Div\!,\Omega).
\end{align}
In the next proposition we present a Poincar\'e inequality to bound
the $\|\cdot\|_{2,\cT}$-norm of a function from $H^2_0(\cT)$ by its jumps and
the projected piecewise iterated gradients. In the case of the Laplacian, such an estimate
is provided by \cite[Lemma 4.2]{DemkowiczG_11_ADM} and \cite[Lemma 3.3]{DemkowiczG_13_PDM}.
Our bound for $\|z\|$ can be proved almost identically to \cite[Lemma 4.2]{DemkowiczG_11_ADM},
switching from the Laplacian to the fourth-order operator and by using the trace operator $\traceDD{}$.

The second bound, for $\|\pwGrad\pwgrad z\|$, corresponds to \cite[Lemma 3.3]{DemkowiczG_13_PDM}
for the Laplacian. The proof of \cite[Lemma 3.3]{DemkowiczG_13_PDM} uses
a projection operator like $\bP$, together with the technical lemma
\cite[Lemma 4.3]{DemkowiczG_11_ADM}. This lemma provides an estimate by norms of
jumps of natural and essential traces (traces that correspond to natural and essential boundary conditions
on elements) and, moreover, uses a Helmholtz decomposition for
its proof. Whereas there is a Helmholtz decomposition for $H(\div\Div\!,\Omega)$,
cf.~\cite[Section~4.1]{AmaraCPC_02_BMM} and \cite[Theorem 4.2]{RafetsederZ_DRK},
the use of jumps of natural traces in $H(\div\Div\!,\cT)$ appears to be too complicated in our case.
We present a shorter technique that is based on the projection operator $\bP$ without using
jumps of natural traces.

\begin{prop} \label{prop_gg_Poincare}
The following Poincar\'e inequality holds,
\[
   \|z\|_{2,\cT} \lesssim \|\bP(\pwGrad\pwgrad z)\| + \|\jump{z,\pwgrad z}\|_{(-3/2,-1/2,\cS)'}
   \quad\forall z\in H^2(\cT).
\]
Here, the implicit constant is independent of the underlying mesh $\cT$,
it only depends on $\Omega$ and $\cC$.
\end{prop}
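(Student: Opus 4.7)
The plan is to estimate $\|z\|$ and $\|\pwGrad\pwgrad z\|$ separately and combine them. Throughout, write $\TTheta:=\pwGrad\pwgrad z$, and recall from \eqref{Pxi} that $\bP(\TTheta)=\Grad\grad\xi$ with $\xi\in H^2_0(\Omega)$, and that
\[
   \PPhi:=\cC\bigl(\bP(\TTheta)-\TTheta\bigr)\in H(\div\Div\!,\Omega),\qquad \div\Div\PPhi=0.
\]

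\emph{Step 1 ($L_2$-bound).} Following the strategy of \cite[Lemma~4.2]{DemkowiczG_11_ADM}, introduce $w\in H^2_0(\Omega)$ as the unique solution (by Lax--Milgram, using positive definiteness of $\cC$ and a standard Poincar\'e inequality on $H^2_0(\Omega)$) of
\[
   \vdual{\cC\Grad\grad w}{\Grad\grad v}=\vdual{z}{v}\quad\forall v\in H^2_0(\Omega),
\]
so that $\cC\Grad\grad w\in H(\div\Div\!,\Omega)$ with $\div\Div(\cC\Grad\grad w)=z$ and $\|\cC\Grad\grad w\|_{\div\Div}\lesssim\|z\|$. Writing $\|z\|^2=\vdual{\div\Div(\cC\Grad\grad w)}{z}$ and integrating elementwise by parts via \eqref{trT_dd}, we obtain
\[
   \|z\|^2=\vdual{\cC\Grad\grad w}{\pwGrad\pwgrad z}+\dual{\traceDD{}(\cC\Grad\grad w)}{z}_\cS.
\]
Since $\Grad\grad w\in\Grad\grad H^2_0(\Omega)$, the defining property \eqref{proj_P} of $\bP$ replaces $\pwGrad\pwgrad z$ by $\bP(\pwGrad\pwgrad z)$ in the first term, bounding it by $\|\cC\Grad\grad w\|\,\|\bP(\pwGrad\pwgrad z)\|$. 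The second term is estimated by the seminorm \eqref{seminorm_z} and Proposition~\ref{prop_dd_trace}. Dividing through by $\|z\|$ yields $\|z\|\lesssim\|\bP(\pwGrad\pwgrad z)\|+\|\jump{z,\pwgrad z}\|_{(-3/2,-1/2,\cS)'}$.

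\emph{Step 2 (bound on $\pwGrad\pwgrad z$).} Coercivity of $\cC$ gives $\|\TTheta-\bP(\TTheta)\|^2\lesssim -\vdual{\TTheta-\bP(\TTheta)}{\PPhi}$. Testing \eqref{proj_P} with $\deltaz=\xi$ (equivalently, using \eqref{proj_xi}) produces the orthogonality $\vdual{\bP(\TTheta)}{\PPhi}=0$, so this reduces to $-\vdual{\pwGrad\pwgrad z}{\PPhi}$. Integrating elementwise by parts with \eqref{trT_dd} and invoking $\div\Div\PPhi=0$ converts this into a pure jump term,
\[
   \vdual{\pwGrad\pwgrad z}{\PPhi}=-\dual{\traceDD{}(\PPhi)}{z}_\cS,
\]
which, via \eqref{seminorm_z} and Proposition~\ref{prop_dd_trace}, is bounded by $\|\jump{z,\pwgrad z}\|_{(-3/2,-1/2,\cS)'}\,\|\PPhi\|_{\div\Div}$. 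Because $\div\Div\PPhi=0$ and $\cC$ is bounded, $\|\PPhi\|_{\div\Div}=\|\PPhi\|\lesssim\|\TTheta-\bP(\TTheta)\|$, so the factor cancels and
\[
   \|(I-\bP)(\pwGrad\pwgrad z)\|\lesssim\|\jump{z,\pwgrad z}\|_{(-3/2,-1/2,\cS)'}.
\]
A triangle inequality then bounds $\|\pwGrad\pwgrad z\|$ by the claimed right-hand side, and combining with Step~1 finishes the proof.

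\emph{Anticipated main obstacle.} The delicate step is Step~2: one must recognize that the $\bP$-orthogonal remainder of $\pwGrad\pwgrad z$, after multiplication by $\cC$, lands in $H(\div\Div\!,\Omega)$ \emph{with vanishing double divergence}, so that the elementwise integration by parts contributes \emph{only} a jump pairing against $z$ and no volume term. This is precisely the mechanism that lets us avoid invoking natural-trace jumps of $H(\div\Div\!,\cT)$ and the corresponding Helmholtz decomposition used in \cite[Lemma~4.3]{DemkowiczG_11_ADM}.
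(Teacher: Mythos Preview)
Your proof is correct and follows essentially the same approach as the paper: Step~1 is identical (the paper calls your $w$ by $\phi$), and Step~2 hinges on the very same observation \eqref{Pxi} that $\cC(\bP(\TTheta)-\TTheta)\in H(\div\Div\!,\Omega)$ with vanishing double divergence, so that elementwise integration by parts leaves only the jump term. The only cosmetic difference is that you bound $\|(I-\bP)(\pwGrad\pwgrad z)\|$ directly and then apply a triangle inequality, whereas the paper splits $\vdual{\pwGrad\pwgrad z}{\cC\pwGrad\pwgrad z}$ into the $\xi$- and $(z-\xi)$-parts and bounds each; the underlying mechanism is the same.
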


\begin{proof}
We start by proving
\[
   \|z\| \lesssim \|\bP(\pwGrad\pwgrad z)\| + \|\jump{z,\pwgrad z}\|_{(-3/2,-1/2,\cS)'}
   \quad\forall z\in H^2(\cT).
\]
For given $z\in H^2(\cT)$ let $\phi\in H^2_0(\Omega)$ be the solution to
\[
   \div\Div\cC\Grad\grad\phi = z\quad\text{in}\quad\Omega.
\]
Then $\|\cC\Grad\grad\phi\|\le C \|z\|$ for a constant $C>0$ that only depends on $\Omega$
and $\cC$, and we obtain, by using definition \eqref{seminorm_z} and Proposition~\ref{prop_dd_trace},
\begin{align*}
   \|z\|^2
   &= \vdual{\div\Div\cC\Grad\grad\phi}{z}
    = \vdual{\cC\Grad\grad\phi}{\pwGrad\pwgrad z} + \dual{\traceDD{}(\cC\Grad\grad\phi)}{z}_\cS
   \\
   &\le
   C \|z\| \|\bP(\pwGrad\pwgrad z)\|
   + \|\jump{z,\pwgrad z}\|_{(-3/2,-1/2,\cS)'}
     \Bigl( \|\cC\Grad\grad\phi\|^2 + \|\div\Div\cC\Grad\grad\phi\|^2 \Bigr)^{1/2}
   \\
   &\lesssim
   \|z\| \Bigl( \|\bP(\pwGrad\pwgrad z)\| + \|\jump{z,\pwgrad z}\|_{(-3/2,-1/2,\cS)'} \Bigr).
\end{align*}
This proves the bound for $\|z\|$.
It remains to show that
\[
   \|\pwGrad\pwgrad z\| \lesssim \|\bP(\pwGrad\pwgrad z)\| + \|\jump{z,\pwgrad z}\|_{(-3/2,-1/2,\cS)'}
   \quad\forall z\in H^2(\cT).
\]
For given $z\in H^2(\cT)$ let $\xi=\xi(\pwGrad\pwgrad z)$, cf.~\eqref{proj_xi}.
By definition \eqref{proj_P} and relation \eqref{Pxi} it holds
\begin{align} \label{pf_gg_Poincare_1}
   \vdual{\pwGrad\pwgrad z}{\cC\Grad\grad\xi}
   = \vdual{\Grad\grad\xi}{\cC\Grad\grad\xi}
   \lesssim \|\Grad\grad\xi\|^2 = \|\bP(\pwGrad\pwgrad z)\|^2.
\end{align}
By \eqref{Pxi} we have
$\div\Div\cC\pwGrad\pwgrad(z-\xi)=0$ and $\cC\pwGrad\pwgrad(z-\xi)\in H(\div\Div\!,\Omega)$.
Recalling \eqref{trT_dd}, \eqref{tr_dd} with $\TTheta=\cC\pwGrad\pwgrad(z-\xi)$ we conclude that
\begin{align} \label{pf_gg_Poincare_2}
   \vdual{\pwGrad\pwgrad z}{\cC\pwGrad\pwgrad(z-\xi)}
   &=
   \vdual{z}{\div\Div\cC\pwGrad\pwgrad(z-\xi)} - \dual{\traceDD{}(\cC\pwGrad\pwgrad(z-\xi))}{z}_\cS
   \nonumber\\
   &=
   - \dual{\traceDD{}(\cC\pwGrad\pwgrad(z-\xi))}{z}_\cS.
\end{align}
Combination of \eqref{pf_gg_Poincare_1} and \eqref{pf_gg_Poincare_2} yields
\begin{align*}
   \|\pwGrad\pwgrad z\|^2
   &\lesssim \vdual{\pwGrad\pwgrad z}{\cC\pwGrad\pwgrad z}
   =
   \vdual{\pwGrad\pwgrad z}{\cC\Grad\grad\xi} + \vdual{\pwGrad\pwgrad z}{\cC\pwGrad\pwgrad(z-\xi)}
   \\
   &\lesssim
   \|\bP(\pwGrad\pwgrad z)\|\, \|\Grad\grad\xi\|
   -
   \dual{\traceDD{}(\cC\pwGrad\pwgrad(z-\xi))}{z}_\cS.
\end{align*}
We finish the proof by bounding $\|\Grad\grad\xi\|\lesssim \|\pwGrad\pwgrad z\|$
by stability of problem \eqref{proj_xi},
and by applying as before, Proposition~\ref{prop_dd_trace} in combination with definition~\eqref{seminorm_z}.
This gives
\begin{align*}
\lefteqn{
   \dual{\traceDD{}(\cC\pwGrad\pwgrad(z-\xi))}{z}_\cS
   \le
   \|\cC\pwGrad\pwgrad(z-\xi)\|_{\div\Div}\|\jump{z,\pwgrad z}\|_{(-3/2,-1/2,\cS)'}
}
   \\
   &=
   \|\cC\pwGrad\pwgrad(z-\xi)\| \|\jump{z,\pwgrad z}\|_{(-3/2,-1/2,\cS)'}
   \lesssim
   \|\pwGrad\pwgrad z\| \|\jump{z,\pwgrad z}\|_{(-3/2,-1/2,\cS)'}.
\end{align*}
\end{proof}

\section{Variational formulation and DPG method} \label{sec_uw}

Let us return to our preliminary formulation \eqref{VFa}. We now know that we have to interpret
the interface terms as
\[
   \sum_{T\in\cT} \dual{\nn\cdot\Div\MM}{z}_{\partial T}
   - \sum_{T\in\cT} \dual{\MM\nn}{\grad z}_{\partial T}
   =
   \dual{\traceDD{}(\MM)}{z}_\cS
\]
and
\[
     \sum_{T\in\cT} \dual{\nn\cdot\Div\TTheta}{u}_{\partial T}
   - \sum_{T\in\cT} \dual{\TTheta\nn}{\grad u}_{\partial T}
   =
   \dual{\traceGG{}(u)}{\TTheta}_\cS.
\]
Introducing the independent trace variables
$\tQ:=\traceDD{}(\MM)$, $\tu:=\traceGG{}(u)$, and spaces
\begin{align*}
   &\UU := L_2(\Omega)\times\LL_2^s(\Omega)\times \bH^{3/2,1/2}_{00}(\cS) \times \bH^{-3/2,-1/2}(\cS),\\
   &\VV := H^2(\cT)\times H(\div\Div\!,\cT)
\end{align*}
with respective norms
\begin{align*}
   \|(u,\MM,\tu,\tQ)\|_\UU^2
   &:=
   \|u\|^2 + \|\MM\|^2 + \|\tu\|_{3/2,1/2,00,\cS}^2 + \|\tQ\|_{-3/2,-1/2,\cS}^2,
   \\
   \|(z,\TTheta)\|_\VV^2
   &:=
   \|z\|_{2,\cT}^2 + \|\TTheta\|_{\div\Div\!,\cT}^2,
\end{align*}
our ultraweak variational formulation of \eqref{prob} is:
\emph{Find $(u,\MM,\tu,\tQ)\in \UU$ such that}
\begin{align} \label{VF}
   b(u,\MM,\tu,\tQ;z,\TTheta) = L(z,\TTheta)
   \quad\forall (z,\TTheta)\in\VV.
\end{align}
Here,
\begin{align} \label{b}
   b(u,\MM,\tu,\tQ;z,\TTheta)
   :=
   &\vdual{\MM}{\pwGrad\pwgrad z+\cCinv\TTheta}
   + \vdual{u}{\pwdiv\pwDiv\TTheta}
   + \dual{\tQ}{z}_\cS - \dual{\tu}{\TTheta}_\cS,
\end{align}
and
\begin{align*}
   L(z,\TTheta) := -\vdual{f}{z}.
\end{align*}
Note that the skeleton dualities in \eqref{b} are defined by
\eqref{tr_dd_dual} and \eqref{tr_gg_dual}.
One of our main results is the following theorem.

\begin{theorem} \label{thm_stab}
For any function $f\in L_2(\Omega)$ (or any functional $L\in\VV'$) there exists
a unique and stable solution $(u,\MM,\tu,\tQ)\in \UU$ to \eqref{VF},
\[
   \|u\| + \|\MM\| + \|\tu\|_{3/2,1/2,00,\cS} + \|\tQ\|_{-3/2,-1/2,\cS}
   \lesssim
   \|f\|\quad\text{(or $\|L\|_{\VV'}$)}
\]
with a hidden constant that is independent of $f$ (or $L$) and $\cT$.
\end{theorem}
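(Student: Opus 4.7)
The plan is to apply the Banach--Ne\v{c}as--Babu\v{s}ka theorem to the bilinear form $b:\UU\times\VV\to\R$. By that framework, Theorem~\ref{thm_stab} is equivalent to (i) continuity of $b$ and (ii) well-posedness of the adjoint problem in $\VV$: for every $(g,\bG,\bj,\bl)\in\UU'$ there is a unique $(z,\TTheta)\in\VV$ for which $b(\cdot;z,\TTheta)$ realises the data pairing, with $\|(z,\TTheta)\|_\VV\lesssim \|(g,\bG,\bj,\bl)\|_{\UU'}$. Continuity is immediate from Cauchy--Schwarz on the $L_2$-pairings in \eqref{b} together with the duality definitions of $\|\cdot\|_{-3/2,-1/2,\cS}$ and $\|\cdot\|_{3/2,1/2,00,\cS}$ and Propositions~\ref{prop_dd_trace}, \ref{prop_gg_trace} that identify them with the minimum-energy-extension norms used to pair with $H^2(\cT)$ and $H(\div\Div\!,\cT)$.

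Testing $b$ against each primal component in turn yields the adjoint system
\begin{align*}
  \pwdiv\pwDiv\TTheta&=g \text{ in } L_2(\Omega),&
  \cCinv\TTheta+\pwGrad\pwgrad z&=\bG \text{ in }\LL_2^s(\Omega),\\
  \dual{\tQ}{z}_\cS&=\bj(\tQ)\ \forall\tQ,&
  \dual{\tu}{\TTheta}_\cS&=-\bl(\tu)\ \forall\tu.
\end{align*}
Propositions~\ref{prop_gg_jump} and \ref{prop_dd_jump} interpret the two skeleton equations as an exact measurement of how $z$ and $\TTheta$ deviate from $H^2_0(\Omega)$ and $H(\div\Div\!,\Omega)$. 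Following the strategy outlined in the introduction, I would not split the adjoint into a homogeneous product-space problem plus an inhomogeneous global one, but rather solve the coupled system directly, keeping $(z,\TTheta)$ in the product space while testing the field equations against functions in the corresponding global spaces. For homogeneous jump data $\bj=\bl=0$ the skeleton conditions force $(z,\TTheta)\in H^2_0(\Omega)\times H(\div\Div\!,\Omega)$; the elimination $\TTheta=\cC(\bG-\Grad\grad z)$ is then legitimate and the first equation becomes the usual coercive fourth-order problem for $z$ on $H^2_0(\Omega)$, which Lax--Milgram solves.

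The principal obstacle is the full $\VV$-norm bound in terms of the data. The field equations only produce $\|\pwdiv\pwDiv\TTheta\|=\|g\|$ and, after a projection argument, control of $\|\bP(\pwGrad\pwgrad z)\|$ (cf.~\eqref{proj_P}), not of $\|\pwGrad\pwgrad z\|$; similarly, the skeleton equations only bound seminorms. This is exactly the gap closed by the Poincar\'e inequality of Proposition~\ref{prop_gg_Poincare},
\[
  \|z\|_{2,\cT}\lesssim \|\bP(\pwGrad\pwgrad z)\| + \|\jump{z,\pwgrad z}\|_{(-3/2,-1/2,\cS)'},
\]
with the jump seminorm bounded by $\bj$ through \eqref{tr_dd_dual} and Proposition~\ref{prop_dd_trace}. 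Once $\|z\|_{2,\cT}$ is controlled, the field relation $\TTheta=\cC(\bG-\pwGrad\pwgrad z)$ gives $\|\TTheta\|\lesssim \|\bG\|+\|z\|_{2,\cT}$ and $\|\pwdiv\pwDiv\TTheta\|=\|g\|$ closes $\|\TTheta\|_{\div\Div\!,\cT}$; the datum $\bl$ is absorbed analogously via Proposition~\ref{prop_gg_trace}. Assembling the estimates produces the adjoint's stability constant and hence, by the BNB theorem, the conclusion of Theorem~\ref{thm_stab}.
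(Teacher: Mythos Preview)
Your proposal is correct and follows the paper's approach: adjoint well-posedness via the elimination $\TTheta=\cC(\bG-\pwGrad\pwgrad z)$, then the Poincar\'e inequality (Proposition~\ref{prop_gg_Poincare}) to control $\|z\|_{2,\cT}$ from $\|\bP(\pwGrad\pwgrad z)\|$ and the jump seminorm tied to $\bj$. The one point you leave vague---how $\bl$ enters the estimate for $z$---is made explicit in the paper (Lemma~\ref{la_saddle}): substituting the elimination into $\pwdiv\pwDiv\TTheta=g$ and testing with $\deltaz\in H^2_0(\Omega)$ produces the skeleton term $\dual{\traceGG{}(\deltaz)}{\TTheta}_\cS=-\bl(\traceGG{}(\deltaz))$, which sits on the right-hand side of the reduced saddle-point problem for $z$ and is bounded by $\|\bl\|\,\|\deltaz\|_2$ via Proposition~\ref{prop_gg_trace}.
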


A proof of this theorem is given in Section~\ref{sec_pf}.

Now, the DPG method with optimal test functions consists in solving \eqref{VF}
within discrete spaces $\UU_h\subset\UU$ and $\ttt(\UU_h)\subset\VV$.
Here,  $\ttt:\;\UU\to\VV$ is the \emph{trial-to-test operator}, defined by
\[
   \ip{\ttt(\uu)}{\vv}_\VV = b(\uu,\vv)\quad\forall\vv\in\VV
\]
with inner product $\ip{\cdot}{\cdot}$ in $\VV$.

Then, for given finite-dimensional space $\UU_h\subset\UU$, the discrete method is:
\emph{Find $\uu_h\in\UU_h$ such that}
\begin{align} \label{DPG}
   b(\uu_h,\ttt\bdeltau) = L(\ttt\bdeltau) \quad\forall\bdeltau\in\UU_h.
\end{align}
It is a minimum residual method that delivers the best approximation
in the \emph{energy norm} (or residual norm)
$\|\cdot\|_\EE:=\|B(\cdot)\|_{\VV'}$, cf., e.g.,~\cite{DemkowiczG_11_ADM}.
Here, $B:\;\UU\to\VV'$ is the operator induced by the bilinear form $b(\cdot,\cdot)$.

Our second main result is the quasi-optimal convergence of the DPG scheme \eqref{DPG}.

\begin{theorem} \label{thm_DPG}
Let $f\in L_2(\Omega)$ be given. For any finite-dimensional subspace $\UU_h\subset\UU$
there exists a unique solution $\uu_h\in\UU_h$ to \eqref{DPG}. It satisfies the quasi-optimal
error estimate
\[
   \|\uu-\uu_h\|_\UU \lesssim \|\uu-\bw\|_\UU
   \quad\forall\bw\in\UU_h
\]
with a hidden constant that is independent of $f$, $\cT$ and $\UU_h$.
\end{theorem}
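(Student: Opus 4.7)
The plan is to invoke the standard abstract DPG framework and reduce everything to the well-posedness result already stated as Theorem~\ref{thm_stab}. The bilinear form $b:\;\UU\times\VV\to\R$ is bounded: each of the volume pairings in \eqref{b} is controlled by Cauchy--Schwarz, while the skeleton dualities $\dual{\tQ}{z}_\cS$ and $\dual{\tu}{\TTheta}_\cS$ are bounded by the definitions \eqref{norm_tr_dd} and of $\|\cdot\|_{3/2,1/2,00,\cS}$ together with the product norms on $H^2(\cT)$ and $H(\div\Div\!,\cT)$. Together with Theorem~\ref{thm_stab}, this means that the induced operator $B:\;\UU\to\VV'$ is a bounded isomorphism, so the energy norm $\|\uu\|_\EE:=\|B\uu\|_{\VV'}$ is equivalent to $\|\uu\|_\UU$ with constants independent of the mesh $\cT$.

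Next, I would use the characterization of the DPG method with optimal test functions as a minimum residual method. Specifically, the definition of $\ttt$ via $\ip{\ttt\uu}{\vv}_\VV=b(\uu,\vv)$ identifies $\ttt$ with the composition $R_\VV^{-1}B$, where $R_\VV:\VV\to\VV'$ is the Riesz map. Hence the discrete equation \eqref{DPG} is the normal equation for the least-squares problem
\[
   \uu_h = \argmin_{\bw\in\UU_h} \|L - B\bw\|_{\VV'}
         = \argmin_{\bw\in\UU_h} \|\uu-\bw\|_\EE.
\]
Existence and uniqueness of $\uu_h$ follow because $B$ is injective (by Theorem~\ref{thm_stab}), so $\ttt$ is injective on $\UU_h$, making the stiffness matrix $\big(\ip{\ttt\bdeltau_i}{\ttt\bdeltau_j}_\VV\big)_{i,j}$ symmetric positive definite.

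Combining the best-approximation property in the energy norm with the norm equivalence yields
\[
   \|\uu-\uu_h\|_\UU \lesssim \|\uu-\uu_h\|_\EE
   = \inf_{\bw\in\UU_h} \|\uu-\bw\|_\EE
   \lesssim \inf_{\bw\in\UU_h} \|\uu-\bw\|_\UU,
\]
which is exactly the claimed quasi-optimal estimate, with constants depending only on the equivalence constants between $\|\cdot\|_\EE$ and $\|\cdot\|_\UU$, hence independent of $f$, $\cT$, and $\UU_h$.

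There is essentially no obstacle here: the entire weight of the proof rests on Theorem~\ref{thm_stab}, and the remaining steps are textbook (cf.\ \cite{DemkowiczG_11_ADM,DemkowiczGN_12_CDP}). The only delicate point worth checking is that the boundedness constant of $b$ is mesh-independent; this is ensured by working with the product norms $\|\cdot\|_{2,\cT}$ and $\|\cdot\|_{\div\Div\!,\cT}$ together with the trace-norm identities of Propositions~\ref{prop_dd_trace} and~\ref{prop_gg_trace}, both of which are summation identities over elements and thus impose no mesh-dependent constants.
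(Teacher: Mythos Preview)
Your proposal is correct and follows essentially the same route as the paper: the paper also reduces Theorem~\ref{thm_DPG} to the best-approximation property of the DPG scheme in the energy norm together with the equivalence $\|\cdot\|_\EE\simeq\|\cdot\|_\UU$, obtaining the latter from boundedness of $b(\cdot,\cdot)$ (checked in the proof of Theorem~\ref{thm_stab}) and the adjoint stability of Proposition~\ref{prop_adj} (which is what underlies Theorem~\ref{thm_stab}). The only cosmetic difference is that the paper cites Proposition~\ref{prop_adj} directly for the inf-sup direction, whereas you package this through Theorem~\ref{thm_stab}; the content is identical.
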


A proof of this theorem is given in Section~\ref{sec_pf}.

\section{Adjoint problem and proofs of Theorems~\ref{thm_stab},~\ref{thm_DPG}} \label{sec_adj}

As discussed in the introduction, key step to show well-posedness of the variational formulation
\eqref{VF} is to show stability of its adjoint problem, which we formulate next.

\emph{Find $z\in H^2(\cT)$ and $\TTheta\in H(\div\Div\!,\cT)$ such that}
\begin{subequations} \label{adj}
\begin{alignat}{2}
    \pwdiv\pwDiv\TTheta         &= g  && \ \in L_2(\Omega)\label{a1},\\
    \cCinv\TTheta + \pwGrad\pwgrad z  &= \bH  &&\ \in \LL_2^s(\Omega)\label{a2},\\
    \jump{\TTheta\nn,\nn\cdot\pwDiv\TTheta} 
                                &= \br  && \ \in \Bigl(\bH^{3/2,1/2}_{00}(\cS)\Bigr)'\label{jtheta},\\
    \jump{z,\pwgrad z}          &= \bj  && \ \in \Bigl(\bH^{-3/2,-1/2}(\cS)\Bigr)'\label{jz}.
\end{alignat}
\end{subequations}
Here, initially, the data $g$, $\bH$, $\br$, and $\bj$ are obtained
as indicated from the given (arbitrary) function $(z,\TTheta)\in\VV$.
Recall \eqref{jumpQQ} and \eqref{jumpz} for the definition of the jumps.

Proving well-posedness of \eqref{adj} means that we separate the data
from the particular test functions $z$, $\TTheta$.
Then, the functionals on the right-hand sides of \eqref{adj} are arbitrary elements
of the corresponding spaces as indicated.
Specifically, by definition of the dual spaces in \eqref{jtheta}, \eqref{jz},
the functionals $\br$ and $\bj$ stem from corresponding functions (now using different symbols)
$\TTheta_\br\in H(\div\Div\!,\cT)$ and $z_\bj\in H^2(\cT)$, respectively, so that
the following definitions apply.
\begin{align*}
   \text{Given}\ \bv\in \bH^{3/2,1/2}_{00}(\cS),
   &\quad\br(\bv) := \dual{\bv}{\TTheta_\br}_\cS
   &&\text{(according to \eqref{jumpQQ})},
   \\
   \text{and given}\ \bq \in \bH^{-3/2,-1/2}(\cS),
   &\quad\bj(\bq) := \dual{\bq}{z_\bj}_\cS
   &&\text{(according to \eqref{jumpz})}.
\end{align*}
Of course, the functions $\TTheta_\br$, $z_\bj$ are not unique but the induced functionals are.
As indicated in \eqref{jtheta}, \eqref{jz}, the functionals $\br$ and $\bj$ are measured in
dual norms $\|\cdot\|_{(3/2,1/2,00,\cS)'}$ and\linebreak
$\|\cdot\|_{(-3/2,-1/2,\cS)'}$, respectively, see \eqref{seminorm_QQ}, \eqref{seminorm_z}.

\subsection{Well-posedness of the adjoint problem} \label{sec_adj_well}

In the following we again use that $\cC$ induces a self-adjoint isomorphism
$\LL_2^s(\Omega)\to\LL_2^s(\Omega)$.

Combining \eqref{a1} and \eqref{a2} we obtain, in distributional form,
\begin{align} \label{zHg_distr}
   \pwdiv\pwDiv\bigl(\cC\pwGrad\pwgrad z\bigr) = \pwdiv\pwDiv\bigl(\cC\bH\bigr) - g.
\end{align}
Testing with $\deltaz\in H^2_0(\Omega)$ and twice integrating piecewise by parts gives
\begin{align*}
   \vdual{\cC\pwGrad\pwgrad z}{\Grad\grad\deltaz}
   + \dual{\traceGG{}(\deltaz)}{\cC\pwGrad\pwgrad z}_\cS
   =
     \vdual{\cC\bH}{\Grad\grad\deltaz}
   + \dual{\traceGG{}(\deltaz)}{\cC\bH}_\cS
   - \vdual{g}{\deltaz}
\end{align*}
(recall the trace operator $\traceGG{}$ from \eqref{tr_gg}).
Now, by \eqref{a2}, $\cC\bigl(\bH-\pwGrad\pwgrad z\bigr)=\TTheta\in H(\div\Div\!,\cT)$ so that the combined
interface terms are well defined via \eqref{tr_gg},
and coincide with the jumps associated to $\TTheta$,
\[
   \dual{\traceGG{}(\deltaz)}{\cC\bigl(\bH-\pwGrad\pwgrad z\bigr)}_\cS
   =
   \dual{\traceGG{}(\deltaz)}{\TTheta}_\cS
   =
   \jump{\TTheta\nn,\nn\cdot\pwDiv\TTheta}(\traceGG{}(\deltaz)),
\]
cf.~\eqref{jumpQQ}. Taking into account \eqref{jtheta} and \eqref{jz}, the $z$-component
of the solution to \eqref{adj} satisfies the following reduced adjoint problem.

\emph{Given $g\in L_2(\Omega)$, $\bH\in\LL_2^s(\Omega)$, $\br\in\bigl(\bH^{3/2,1/2}_{00}(\cS)\bigr)'$,
and $\bj\in \bigl(\bH^{-3/2,-1/2}(\cS)\bigr)'$, find $z\in H^2(\cT)$ such that}
\begin{subequations} \label{saddle}
\begin{alignat}{2}
   \vdual{\cC\pwGrad\pwgrad z}{\Grad\grad\deltaz}
   &=
     \vdual{\cC\bH}{\Grad\grad\deltaz}
   - \vdual{g}{\deltaz}
   + \br(\traceGG{}(\deltaz))
   &&\quad\forall \deltaz\in H^2_0(\Omega),
   \label{s1}\\
   \jump{z,\pwgrad z}(\bdeltaq) &= \bj(\bdeltaq)
   &&\quad\forall\bdeltaq\in \bH^{-3/2,-1/2}(\cS).
   \label{s2}
\end{alignat}
\end{subequations}

\begin{lemma} \label{la_saddle}
Problem \eqref{saddle} has a unique solution $z\in H^2(\cT)$. It satisfies
\[
   \|z\|_{2,\cT}
   \lesssim
   \|g\| + \|\bH\| + \|\br\|_{(3/2,1/2,00,\cS)'} + \|\bj\|_{(-3/2,-1/2,\cS)'}.
\]
\end{lemma}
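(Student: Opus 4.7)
The plan is to reduce \eqref{saddle} to a coercive problem on $H^2_0(\Omega)$ for existence, and then to obtain the stability bound directly from the Poincar\'e inequality in $H^2(\cT)$ (Proposition~\ref{prop_gg_Poincare}) rather than through the lifting. The structural difficulty is that in \eqref{s1} the trial space $H^2(\cT)$ strictly contains the test space $H^2_0(\Omega)$, so Lax--Milgram cannot be applied to \eqref{saddle} directly on $H^2(\cT)$; the bilinear form $\vdual{\cC\pwGrad\pwgrad\,\cdot}{\Grad\grad\,\cdot}$ has a huge kernel on $H^2(\cT)$ (everything with vanishing $\bP$-projection), and control of $\|\cdot\|_{2,\cT}$ must be routed through Proposition~\ref{prop_gg_Poincare}.

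For existence I would use the built-in representation $\bj(\bdeltaq) = \dual{\bdeltaq}{z_\bj}_\cS$ to fix a lifting $z_\bj \in H^2(\cT)$. By Proposition~\ref{prop_gg_jump}(i), condition \eqref{s2} is equivalent to $z - z_\bj \in H^2_0(\Omega)$. Writing $z = z_0 + z_\bj$ with $z_0 \in H^2_0(\Omega)$, equation \eqref{s1} becomes
\begin{align*}
   \vdual{\cC\Grad\grad z_0}{\Grad\grad\deltaz}
   = \vdual{\cC(\bH - \pwGrad\pwgrad z_\bj)}{\Grad\grad\deltaz}
   - \vdual{g}{\deltaz}
   + \br(\traceGG{}(\deltaz))
   \quad\forall \deltaz \in H^2_0(\Omega).
\end{align*}
The left-hand side is a bounded, $H^2_0(\Omega)$-coercive bilinear form (positive definiteness of $\cC$ combined with the global Poincar\'e inequality $\|w\|_2 \lesssim \|\Grad\grad w\|$ on $H^2_0(\Omega)$), and the right-hand side is bounded linearly in $\|\deltaz\|_2$, where Proposition~\ref{prop_gg_trace} supplies $\|\traceGG{}(\deltaz)\|_{3/2,1/2,00,\cS} \le \|\deltaz\|_2$ for the $\br$-term. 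Lax--Milgram produces a unique $z_0$, and $z := z_0 + z_\bj$ solves \eqref{saddle}. A different lifting $z'_\bj$ differs from $z_\bj$ by an $H^2_0(\Omega)$-element, which is absorbed into $z_0$, so $z$ is independent of the choice.

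For the stability bound I would avoid any (mesh-dependent) norm of $z_\bj$ and argue directly on $z$. Characterising $\bP(\pwGrad\pwgrad z)$ by \eqref{proj_xi} and substituting \eqref{s1}, the function $\xi = \xi(\pwGrad\pwgrad z) \in H^2_0(\Omega)$ with $\bP(\pwGrad\pwgrad z) = \Grad\grad\xi$ solves
\begin{align*}
   \vdual{\Grad\grad\xi}{\cC\Grad\grad\deltaz}
   = \vdual{\cC\bH}{\Grad\grad\deltaz}
   - \vdual{g}{\deltaz}
   + \br(\traceGG{}(\deltaz))
   \quad\forall \deltaz \in H^2_0(\Omega).
\end{align*}
Testing with $\deltaz = \xi$ and using positive definiteness of $\cC$, the global Poincar\'e inequality, and Proposition~\ref{prop_gg_trace} yields
$\|\bP(\pwGrad\pwgrad z)\| = \|\Grad\grad\xi\| \lesssim \|\bH\| + \|g\| + \|\br\|_{(3/2,1/2,00,\cS)'}$.
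Since $\jump{z,\pwgrad z}(\bdeltaq) = \dual{\bdeltaq}{z}_\cS = \bj(\bdeltaq)$, definition \eqref{seminorm_z} gives $\|\jump{z,\pwgrad z}\|_{(-3/2,-1/2,\cS)'} = \|\bj\|_{(-3/2,-1/2,\cS)'}$, and the claimed estimate is then immediate from Proposition~\ref{prop_gg_Poincare}. Uniqueness follows from the stability bound applied to the difference of two solutions with zero data.

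The main obstacle is precisely the trial/test mismatch in \eqref{s1}: none of the three pieces of input data ($g$, $\bH$, $\br$) control anything beyond the $\bP$-projection of $\pwGrad\pwgrad z$, while the norm $\|z\|_{2,\cT}$ also sees the orthogonal complement, and the missing information is supplied by $\bj$ through the jumps. Bridging these two contributions is exactly the role of the Poincar\'e inequality of \S\ref{sec_gg_Poincare}; its mesh independence is what makes the estimate of Lemma~\ref{la_saddle} uniform in $\cT$.
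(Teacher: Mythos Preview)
Your proof is correct but follows a different route from the paper. The paper combines \eqref{s1} and \eqref{s2} into a single Petrov--Galerkin form $a(z;\deltaz,\bdeltaq)=l(\deltaz,\bdeltaq)$ on $H^2(\cT)\times\bigl(H^2_0(\Omega)\times\bH^{-3/2,-1/2}(\cS)\bigr)$ and verifies the two Banach--Ne\v{c}as--Babu\v{s}ka conditions; the inf-sup for $z$ reduces precisely to the Poincar\'e inequality of Proposition~\ref{prop_gg_Poincare}, and injectivity on the test side is shown by first choosing $z=\deltaz\in H^2_0(\Omega)$ to kill the jump term. You instead (i) lift $\bj$ by a concrete $z_\bj\in H^2(\cT)$ and reduce existence to Lax--Milgram on $H^2_0(\Omega)$, and (ii) obtain the stability bound directly, reading \eqref{s1} as the defining equation for $\xi=\xi(\pwGrad\pwgrad z)$ and feeding $\|\bP(\pwGrad\pwgrad z)\|=\|\Grad\grad\xi\|$ and $\|\jump{z,\pwgrad z}\|=\|\bj\|$ straight into Proposition~\ref{prop_gg_Poincare}. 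Your stability argument is arguably cleaner, since it bypasses the lifting entirely and the estimate manifestly does not depend on any choice of $z_\bj$. The paper's abstract formulation, on the other hand, is deliberately set up so that trial and test spaces are kept distinct from the start; as the introduction emphasizes, this is meant to extend to adjoint problems whose data leave the $L_2$ setting, where a simple lifting may not be available.
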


\begin{proof}
Adding relations \eqref{s1}, \eqref{s2} we represent \eqref{saddle} with the notation
\[
   a(z;\deltaz,\bdeltaq) = l(\deltaz,\bdeltaq).
\]
We show that $a(\cdot;\cdot)$ and $l(\cdot)$ are bounded and that $a(\cdot;\cdot)$ satisfies the
required inf-sup conditions.

The boundedness of $l(\cdot)$ is immediate by duality of involved norms,
\begin{align*}
   l(\deltaz,\bdeltaq)
   &=
     \vdual{\cC\bH}{\Grad\grad\deltaz}
   - \vdual{g}{\deltaz}
   + \br(\traceGG{}(\deltaz))
   + \bj(\bdeltaq)
   \\
   &\le
     \|\cC\bH\| \|\Grad\grad\deltaz\|
   + \|g\| \|\deltaz\|
   + \|\br\|_{(3/2,1/2,00,\cS)'} \bigl(\|\deltaz\|^2 + \|\Grad\grad\deltaz\|^2\bigr)^{1/2}
   \\
   &\quad
   + \|\bj\|_{(-3/2,-1/2,\cS)'} \|\bdeltaq\|_{-3/2,-1/2,\cS}
   \\
   \lesssim
   \Bigl(
      \|\bH\| &+ \|g\| + \|\br\|_{(3/2,1/2,00,\cS)'} + \|\bj\|_{(-3/2,-1/2,\cS)'}
   \Bigr)
   \Bigl(\|\deltaz\|_2 + \|\bdeltaq\|_{-3/2,-1/2,\cS}\Bigr).
\end{align*}
The boundedness of $a(\cdot;\cdot)$ is also immediate by using definition \eqref{jumpz}
and duality norm \eqref{norm_tr_dd},
\begin{align*}
   \jump{z,\pwgrad z}(\bdeltaq) \le \|\bdeltaq\|_{-3/2,-1/2,\cS} \|z\|_{2,\cT}.
\end{align*}
It remains to show the inf-sup conditions.

Let $\deltaz\in H^2_0(\Omega)$ and $\bdeltaq\in H^{-3/2,-1/2}(\cS)$ with
$a(z;\deltaz,\bdeltaq)=0$ for any $z\in H^2(\cT)$.
Selecting $z:=\deltaz\in H^2_0(\Omega)$, Proposition~\ref{prop_gg_jump}(i) shows that
$\jump{z,\pwgrad z}(\bdeltaq)=\dual{\bdeltaq}{z}_\cS=0$
(recall \eqref{tr_dd_dual} for the definition of the duality) so that
\[
   a(z;\deltaz,\bdeltaq)
   =
   \vdual{\cC\Grad\grad\deltaz}{\Grad\grad\deltaz} + \jump{z,\pwgrad z}(\bdeltaq)
   =
   \vdual{\cC\Grad\grad\deltaz}{\Grad\grad\deltaz}
   \gtrsim
   \|\Grad\grad\deltaz\|^2=0,
\]
that is, $\deltaz=0$. Using the observed relation for the jump of $z$, it follows that
\[
    a(z;\deltaz,\bdeltaq) = \dual{\bdeltaq}{z}_\cS = 0\quad\forall z\in H^2(\cT),
\]
i.e.,
\[
   \|\bdeltaq\|_{-3/2,-1/2,\cS} = \|\bdeltaq\|_\trddiv{\cS} = 0
\]
by \eqref{norm_tr_dd} and Proposition~\ref{prop_dd_trace}. Therefore, $\bdeltaq=0$.

Finally we check the inf-sup condition,
\begin{align*}
   \sup_{0\not=(\deltaz,\bdeltaq)\in H^2_0(\Omega)\times H^{-3/2,-1/2}(\cS)}
   &\frac {\bigl(\vdual{\cC\pwGrad\pwgrad z}{\Grad\grad\deltaz} + \jump{z,\pwgrad z}(\bdeltaq)\bigr)^2}
         {\|\cC\Grad\grad\deltaz\|^2 + \|\bdeltaq\|_{-3/2,-1/2,\cS}^2}
   \\
   &=
   \|\bP(\pwGrad\pwgrad z)\|^2 + \|\jump{z,\pwgrad z}\|_{(-3/2,-1/2,\cS)'}^2
   \quad\forall z\in H^2(\cT),
\end{align*}
cf.~\eqref{proj_P}. The result follows by the equivalence of the norms
$\|\deltaz\|_2$ and $\|\cC\Grad\grad\deltaz\|$ for $\deltaz\in H^2_0(\Omega)$,
and an application of the Poincar\'e inequality (Proposition~\ref{prop_gg_Poincare}).
\end{proof}

Having analyzed the reduced adjoint problem \eqref{saddle}, we are ready to prove the well-posedness
of the full adjoint problem \eqref{adj}.

\begin{prop} \label{prop_adj}
For arbitrary $g\in L_2(\Omega)$, $\bH\in\LL_2^s(\Omega)$,
$\br\in\bigl(\bH^{3/2,1/2}_{00}(\cS)\bigr)'$, and $\bj\in \bigl(\bH^{-3/2,-1/2}(\cS)\bigr)'$,
the adjoint problem \eqref{adj} has a unique solution $(z,\TTheta)\in\VV$. It satisfies
\[
   \|z\|_{2,\cT} + \|\TTheta\|_{\div\Div\!,\cT}
   \lesssim
   \|g\| + \|\bH\| + \|\br\|_{(3/2,1/2,00,\cS)'} + \|\bj\|_{(-3/2,-1/2,\cS)'}.
\]
\end{prop}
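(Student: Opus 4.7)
The plan is to reduce the adjoint problem \eqref{adj} to the scalar reduced problem \eqref{saddle} already solved in Lemma~\ref{la_saddle}. First I would invoke Lemma~\ref{la_saddle} to obtain $z\in H^2(\cT)$ for the prescribed data, then define $\TTheta := \cC(\bH - \pwGrad\pwgrad z) \in \LL_2^s(\Omega)$, so that \eqref{a2} holds by construction. The remaining tasks are to verify that $\TTheta \in H(\div\Div\!,\cT)$ with $\pwdiv\pwDiv\TTheta = g$, and that the jump conditions \eqref{jtheta} and \eqref{jz} are satisfied.

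To establish the volume equation \eqref{a1} elementwise, I would test \eqref{s1} against $\deltaz \in \cD(T)$ extended by zero to $\Omega\setminus T$: such $\deltaz$ lies in $H^2_0(\Omega)$ and has $\traceGG{}(\deltaz) = 0$, so the $\br$-term vanishes and we are left with $\vdual{\TTheta}{\Grad\grad\deltaz}_T = \vdual{g}{\deltaz}_T$. Density of $\cD(T)$ in $H^2_0(T)$ then yields $\div\Div(\TTheta|_T) = g|_T$ distributionally on $T$, and since $g\in L_2(\Omega)$ this forces $\TTheta|_T \in H(\div\Div\!,T)$. For \eqref{jtheta}, I would run \eqref{s1} with arbitrary $\deltaz \in H^2_0(\Omega)$, rewrite the bulk terms via definition \eqref{trT_dd} of $\traceDD{T}$ on each element, sum over $\cT$, and use \eqref{traces_id} combined with the already established \eqref{a1} to reduce the equation to $\dual{\traceGG{}(\deltaz)}{\TTheta}_\cS = \br(\traceGG{}(\deltaz))$. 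Since $\traceGG{}$ is surjective onto $\bH^{3/2,1/2}_{00}(\cS)$ by the very definition of that space, this yields \eqref{jtheta}. Equation \eqref{jz} is identical to \eqref{s2}.

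For uniqueness and stability I would run the same integration-by-parts identity in reverse: starting from any solution $(z,\TTheta)$ of \eqref{adj}, relations \eqref{a1}, \eqref{a2}, \eqref{jtheta} reassemble into \eqref{s1}, and \eqref{jz} coincides with \eqref{s2}, so the associated $z$ solves the reduced problem \eqref{saddle}. Lemma~\ref{la_saddle} then forces $z=0$ (hence $\TTheta=\cC\bH=0$) when the data vanish, and delivers the bound on $\|z\|_{2,\cT}$. The bound on $\|\TTheta\|_{\div\Div\!,\cT}$ follows from $\|\TTheta\| \lesssim \|\bH\| + \|\pwGrad\pwgrad z\| \lesssim \|\bH\| + \|z\|_{2,\cT}$ together with $\|\pwdiv\pwDiv\TTheta\| = \|g\|$.

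The main obstacle is conceptual: the reduced problem \eqref{saddle} is posed with global test functions in $H^2_0(\Omega)$, whereas the full adjoint problem demands both elementwise PDE control and interface jump data. Unpacking both \eqref{a1} and \eqref{jtheta} from a single variational identity relies on choosing localized test functions to isolate the bulk on one hand, and exploiting the surjectivity of $\traceGG{}$ onto $\bH^{3/2,1/2}_{00}(\cS)$ to recover the full jump functional on the other; the trace identity \eqref{traces_id} provides the bookkeeping that connects these two uses of \eqref{s1}.
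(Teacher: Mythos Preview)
Your proposal is correct and follows essentially the same route as the paper's proof: invoke Lemma~\ref{la_saddle} to obtain $z$, define $\TTheta:=\cC(\bH-\pwGrad\pwgrad z)$, recover \eqref{a1} from \eqref{s1} with localized test functions, recover \eqref{jtheta} from \eqref{s1} with general $\deltaz\in H^2_0(\Omega)$ via the trace identities and surjectivity of $\traceGG{}$, and derive uniqueness and the stability bound exactly as you describe. The paper's argument for \eqref{a1} is phrased slightly differently (it appeals to the distributional identity \eqref{zHg_distr}), but this is the same content as your choice of compactly supported test functions; your version is in fact more explicit.
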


\begin{proof}
By construction, the $z$-component of any solution
$(z,\TTheta)\in\VV$ of \eqref{adj} satisfies \eqref{saddle}, which is uniquely solvable
by Lemma~\ref{la_saddle}. Therefore, the $z$-component of \eqref{adj} is unique.
Starting with the solution $z\in H^2(\cT)$ to \eqref{saddle}, we show that this
leads to a unique solution $(z,\TTheta)\in\VV$ of \eqref{adj}, satisfying the stated bound.
By relation \eqref{s2}, $z$ satisfies \eqref{jz}.
According to Lemma~\ref{la_saddle}, $z$ also satisfies the required bound.

It remains to construct $\TTheta$ and to bound its norm.
We define $\TTheta:=\cC\bigl(\bH-\pwGrad\pwgrad z\bigr)\in\LL_2^s(\Omega)$, thus satisfying (uniquely)
\eqref{a2}.
Using the bound for $\|\pwGrad\pwgrad z\|$, we also see that
$\|\TTheta\|\lesssim \|g\| + \|\bH\| + \|\br\|_{(3/2,1/2,00,\cS)'} + \|\bj\|_{(-3/2,-1/2,\cS)'}$.

Now, \eqref{zHg_distr} shows that
\[
   \pwdiv\pwDiv\TTheta = \pwdiv\pwDiv\cC\bigl(\bH-\pwGrad\pwgrad z\bigr) = g
\]
holds first in distributional sense, and then in $L_2(\Omega)$ by the regularity of $g$. This is \eqref{a1}
and also concludes the proof of the bound for $\|\TTheta\|_{\div\Div\!,\cT}$.

It remains to show \eqref{jtheta}. Let $\bv\in\bH^{3/2,1/2}_{00}(\cS)$ with $\bv=\traceGG{}(v)$
for $v\in H^2_0(\Omega)$.
Recalling the definitions \eqref{jumpQQ}, \eqref{tr_gg_dual}, \eqref{tr_gg}, and \eqref{trT_gg},
we calculate with the previous relations for $\TTheta$ and \eqref{s1},
\begin{align*}
   \jump{\TTheta\nn,\nn\cdot\pwDiv\TTheta}(\bv)
   &=
   \dual{\bv}{\TTheta}_\cS
   =
   \dual{\traceGG{}(v)}{\TTheta}_\cS
   =
   \vdual{\pwdiv\pwDiv\TTheta}{v} - \vdual{\TTheta}{\Grad\grad v}
   \\
   &=
   \vdual{g}{v} - \vdual{\cC(\bH-\pwGrad\pwgrad z)}{\Grad\grad v}
   =
   \br(\traceGG{}(v))
   =
   \br(\bv).
\end{align*}
This shows \eqref{jtheta}, and finishes the proof.
\end{proof}
\subsection{Proofs of Theorems~\ref{thm_stab},~\ref{thm_DPG}} \label{sec_pf}

We are ready to prove our main results. To show Theorem~\ref{thm_stab},
it is enough to check the standard properties.
\begin{enumerate}
\item {\bf Boundedness of the functional.} This is immediate since, for $f\in L_2(\Omega)$, it holds
      $L(z)\le \|f\|\,\|z\|\le \|f\|\,\bigl(\|z\|_{2,\cT} + \|\TTheta\|_{\div\Div\!,\cT}\bigr)$
      for any $(z,\TTheta)\in\VV$.
\item {\bf Boundedness of the bilinear form.} 
      The bound $b(\uu,\vv)\lesssim \|\uu\|_\UU \|\vv\|_\VV$ for all $\uu\in\UU$ and
      $\vv\in\VV$ is also immediate by definition of the
      norms in $\UU$ and $\VV$, cf. the corresponding functional spaces in
      \eqref{a1}--\eqref{jz}.
\item {\bf Injectivity.} If $\uu\in\UU$ with $b(\uu,\vv)=0\ \forall \vv\in\VV$ then
      $\uu=0$, as can be seen as follows. For given $\uu=(u,\MM,\tu,\tQ)\in\UU$
      we select $g=u$, $\bH=\MM$, and let $\bj\in (\bH^{-3/2,-1/2}(\cS))'$
      and $\br\in (\bH^{3/2,1/2}_{00}(\cS))'$ be the Riesz representatives of
      $\tQ\in\bH^{-3/2,-1/2}(\cS)$ and $-\tu\in \bH^{3/2,1/2}_{00}(\cS)$, respectively.
      According to Proposition~\ref{prop_adj}, there exists $\vv\in\VV$ that satisfies
      the adjoint problem \eqref{adj} with these functionals. It also yields
      \[
         b(\uu,\vv) = \|u\|^2 + \|\MM\|^2 + \|\tu\|_{3/2,1/2,00,\cS}^2 + \|\tQ\|_{-3/2,-1/2,\cS}^2
         =0
      \]
      which proves that $\uu=0$.
\item {\bf Inf-sup condition.} For given $\vv=(z,\TTheta)\in\VV$ let $g$, $\bH$,
      $\bj$, and $\br$ be defined by \eqref{adj}. Then, by Proposition~\ref{prop_adj},
\begin{align*}
   \sup_{0\not=\uu\in\UU} \frac {b(\uu,\vv)}{\|\uu\|_\UU}
   &=
   \sup_{0\not=\uu\in\UU}
   \frac {\vdual{u}{g} + \vdual{\MM}{\bH} - \br(\tu) + \bj(\tQ)}
   {\bigl(\|u\|^2 + \|\MM\|^2 + \|\tu\|_{3/2,1/2,00,\cS}^2 + \|\tQ\|_{-3/2,-1/2,\cS}^2\bigr)^{1/2}}
   \\
   &=
   \Bigl(\|g\|^2 + \|\bH\|^2 + \|\br\|_{(3/2,1/2,00,\cS)'}^2 + \|\bj\|_{(-3/2,-1/2,\cS)'}^2
   \Bigr)^{1/2}
   \gtrsim
   \|\vv\|_{\VV}
\end{align*}
   with an implicit constant that is independent of $\vv$ and $\cT$.
\end{enumerate}
This proves Theorem~\ref{thm_stab}.

Recall that the DPG method delivers the best approximation in the energy norm $\|\cdot\|_\EE$,
\[
   \|\uu-\uu_h\|_\EE = \min\{\|\uu-\bw\|_\EE;\; \bw\in\UU_h\}.
\]
Therefore, to show Theorem~\ref{thm_DPG}, it is enough to prove the equivalence of the
energy norm and the norm $\|\cdot\|_\UU$.
The bound $\|\uu\|_\EE\lesssim\|\uu\|_\UU$ is equivalent to the boundedness
of $b(\cdot,\cdot)$, which we have just checked.
By definition of $\|\cdot\|_\EE=\|B(\cdot)\|_{\VV'}$, the other inequality,
$\|\uu\|_\UU\lesssim \|\uu\|_\EE$ for all $\uu\in\UU$, is equivalent to
the stability of the adjoint problem \eqref{adj}, which has been shown by
Proposition~\ref{prop_adj}. We have thus shown Theorem~\ref{thm_DPG}.

\section{Discretization and numerical examples} \label{sec_num}

In this section we discuss the construction of low-order discrete spaces,
some implementational aspects, and present numerical tests.
Throughout, we consider $\di=2$ and use regular triangular meshes $\cT$ of shape-regular elements,
\begin{align*}
  \sup_{T\in\cT} \frac{\diam(T)^2}{|T|} \leq C_\mathrm{shape}.
\end{align*}
As usual we denote by $h:=h_\cT := \max_{T\in\cT} \diam(T)$ the discretization parameter.

\subsection{Discrete spaces}\label{sec:discretespaces}
For $T\in\cT$, let $P^p(T)$ denote the space of polynomials on $T$ of order less than or
equal to $p\in\N_0$ and define
\begin{align*}
  P^p(\cT) := \{v\in L_2(\Omega);\; v|_T\in P^p(T) \,\forall T\in\cT\}.
\end{align*}
We set $\PP^p(T):=P^p(T)^{2\times 2}$ and $\PP^p(\cT) := P^p(\cT)^{2\times 2}$.
We seek approximations of the field variables $(u,\MM)\in L_2(\Omega)\times \LL_2^s(\Omega)$
in piecewise polynomial spaces,
\begin{align*}
  (u_h,\MM_h) \in P^0(\cT) \times (\PP^0(\cT)\cap \LL_2^s(\Omega)).
\end{align*}
In the following we use the notation for edges, nodes and their sets as introduced at the beginning
of \S\ref{sec_jump2_dd}. Specifically,
$\cE_T$ denotes the set of edges of $T$ and $\cE := \bigcup_{T\in\cT} \cE_T$.
Let $P^p(E)$ denote the space of polynomials on $E\in\cE$ and define
\begin{align*}
  P^p(\cE_T) := \{v\in L_2(\partial T);\; v|_E \in P^p(E)\ \forall E\in \cE_T\},
  \quad T\in\cT.
\end{align*}
The definition of conforming discrete spaces for the skeleton variables $(\tu,\tQ)$
is a little bit more involved. For a simpler representation we only consider lowest-order spaces.
We start by defining, for $T\in\cT$, the local space
\begin{align*}
  \widehat U_{\trggrad{\partial T}}
  :=
  \traceGG{T}\left(\{v\in H^2(T);\; \Delta^2 v + v = 0, \, v|_{\partial T} \in P^3(\cE_T), \, 
  \nn\cdot\grad v|_{\partial T}\in P^1(\cE_T)\}\right).
\end{align*}
Let $\cN_T$ denote the vertex set of $T\in\cT$ and set $\cN := \bigcup_{T\in\cT} \cN_T$.
We associate the following degrees of freedom to a triangle $T$ and the space $\widehat U_{\trggrad{\partial T}}$,
\begin{align}\label{eq:dof:tU}
  \{(v(e),\grad v(e));\; e\in\cN_T\}.
\end{align}
Observe that these degrees of freedom define a unique function in $\widehat U_{\trggrad{\partial T}}$.
The corresponding global discrete space is then defined by
\begin{align*}
  \widehat U_{\trggrad\cS}
    := \{\tv\in \bH^{3/2,1/2}(\cS);\; \tv|_{\partial T}  \in \widehat U_{\trggrad{\partial T}}\ \forall T\in\cT\}
\end{align*}
with associated global degrees $\{(v(e),\grad v(e));\; e\in\cN\}$.
To get a subspace of $\bH^{3/2,1/2}_{00}(\cS)$ we set the degrees of freedom corresponding
to boundary vertices to zero, leading to the space
\begin{align*}
   \widehat U_\cS := \traceGG{}(H_0^2(\Omega)) \cap \widehat U_{\trggrad\cS}
\end{align*}
with dimension $3\#\cN_0$.

\begin{remark}\label{rem:approxTraceGG}
  Our definition of the skeleton spaces is closely related to the traces of spaces used in virtual element methods.
  In fact, for the present case the trace of the space defined in~\cite[\S 4.2]{BrezziM_13_VEM}
  is the same as $\widehat U_\cS$. 
  In particular, we get the approximation property, cf.~\cite[Remark~4.6]{BrezziM_13_VEM},
  \begin{align*}
    \min_{\tu_h\in\widehat U_\cS} \norm{\tu-\tu_h}{3/2,1/2,00,\cS} \leq C h |u|_{H^3(\Omega)}
  \end{align*}
  where $\tu = \traceGG{}(u)$ and
  $C>0$ is a generic $\cT$-independent constant.
  Let us note that $\widehat U_\cS$ coincides also with the trace of the (reduced)
  Hsieh--Clough--Tocher composite finite element space, cf.~\cite{Ciarlet_78_IEE}.
\end{remark}

It remains to construct a finite-dimensional subspace for the approximation of $\tQ \in\bH^{-3/2,-1/2}(\cS)$.
For $T\in\cT$ we define the local (volume) space
\begin{align}
\begin{split}\label{eq:defUddivT}
  U_{\trddiv{T}} := \{\TTheta\in &\,\cH(\div\Div\!,T);\; \Grad\grad\div\Div \TTheta +\TTheta = 0,\\
  &\bigl(\nn\cdot\Div\TTheta
         + \partial_{\bt,\cE_T}
                    (\bt\cdot\TTheta\nn)\bigr)|_{\partial T}\in P^0(\cE_T),\quad
  \nn\cdot\TTheta\nn|_{\partial T} \in P^0(\cE_T) \}.
\end{split}
\end{align}
Here, $\partial_{\bt,\cE_T}$ denotes the tangential derivative operator that is taken piecewise
on the edges of $\partial T$, cf.~Remark~\ref{rem:traceOp2D}.
To this space we associate the moments and point values
\begin{subequations}\label{eq:dofDDlocal}
\begin{align}
  &\alpha_E := \dual{\nn\cdot\Div\TTheta + \partial_\bt (\bt\cdot\TTheta\nn)}{1}_E
  \hspace{-7em}&&(E\in\cE_T),\label{eq:dofDDlocal_a}\\
  &\beta_E := \dual{\nn\cdot\TTheta\nn}{1}_E
  &&(E\in\cE_T),\label{eq:dofDDlocal_b}\\
  &\gamma_e := \jjump{\TTheta}_{\partial T}(e)
  &&(e\in \cN_T),\label{eq:dofDDlocal_c}
\end{align}
\end{subequations}
cf.~\eqref{traces_tn_2d}, \eqref{jjump_node}.

\begin{lemma}\label{lem:dofDDlocal}
  The degrees of freedom~\eqref{eq:dofDDlocal} define a unique element in $U_{\trddiv{T}}$ and vice versa.
\end{lemma}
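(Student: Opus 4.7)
The plan is to verify unisolvence: the linear map $U_{\trddiv{T}} \to \R^9$, $\TTheta \mapsto ((\alpha_E)_{E \in \cE_T}, (\beta_E)_{E \in \cE_T}, (\gamma_e)_{e \in \cN_T})$, is a bijection. I split the argument into injectivity and surjectivity, both leaning on the decomposition identity \eqref{pf_cor_prop} (with its two-dimensional specializations \eqref{traces_tn_2d} and \eqref{jjump_node}) together with the uniqueness and existence results inside the proof of Lemma~\ref{la_tr_dd_norms}.

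\emph{Injectivity.} Suppose $\TTheta \in U_{\trddiv{T}}$ has all nine DoFs equal to zero. The polynomial constraints in \eqref{eq:defUddivT} already force $\traceDD{T,E,\bt}(\TTheta)$ and $\traceDD{T,E,\nn}(\TTheta)$ to be constants on every edge $E$; vanishing of $\alpha_E$ and $\beta_E$ (their integrals over $E$) then makes both identically zero. Simultaneously, $\gamma_e = 0$ combined with \eqref{jjump_node} gives $\jjump{\TTheta}_{\partial T}(e) = 0$ at every vertex. Feeding this into \eqref{pf_cor_prop} yields $\traceDD{T}(\TTheta) = 0$. Since $\Grad\grad\div\Div\TTheta + \TTheta = 0$ is part of the definition of $U_{\trddiv{T}}$, we may invoke the uniqueness portion of Lemma~\ref{la_tr_dd_norms}; concretely, the PDE promotes $\div\Div\TTheta$ to $H^2(T)$, so testing \eqref{trT_dd} with $\deltaz = \div\Div\TTheta$ produces $\|\div\Div\TTheta\|_T^2 + \|\TTheta\|_T^2 = 0$, hence $\TTheta = 0$.

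\emph{Surjectivity.} Given arbitrary prescribed targets, I would define a candidate functional $\bq$ on $H^2(T)$ by
\[
   \dual{\bq}{z}_{\partial T}
   := \sum_{E\in\cE_T} \Bigl(\tfrac{\alpha_E}{|E|}\int_E z\,d\ell
                            - \tfrac{\beta_E}{|E|}\int_E \nn\cdot\grad z\,d\ell\Bigr)
   - \sum_{e\in\cN_T} \gamma_e\, z(e).
\]
For $\di = 2$ the Sobolev embedding $H^2(T) \hookrightarrow C(\overline{T})$ controls the point values $z(e)$, and standard trace inequalities control the edge integrals, so $\bq$ is a bounded functional on $H^2(T)$ that moreover vanishes on $H^2_0(T)$. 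The construction inside the proof of Lemma~\ref{la_tr_dd_norms} (solve \eqref{prob_dd_z} for $z$, then \eqref{prob_dd_QQ} for $\TTheta$) therefore produces $\TTheta \in H(\div\Div\!,T)$ satisfying $\Grad\grad\div\Div\TTheta + \TTheta = 0$ and $\traceDD{T}(\TTheta) = \bq$. Plugging into \eqref{pf_cor_prop} and testing against $z$ supported in a neighborhood of the interior of a single edge (so all vertex contributions drop) identifies $\traceDD{T,E,\bt}(\TTheta) = \alpha_E/|E|$ and $\traceDD{T,E,\nn}(\TTheta) = \beta_E/|E|$ as constants on $E$, which simultaneously places $\TTheta$ in $\cH(\div\Div\!,T)$ and hence in $U_{\trddiv{T}}$ and matches the edge DoFs. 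A subsequent test with a function concentrated near a single vertex then recovers $\jjump{\TTheta}_{\partial T}(e) = \gamma_e$ via \eqref{jjump_node}.

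The main obstacle I anticipate is the last identification step in surjectivity: one starts from a single integrated equality $\dual{\bq}{z}_{\partial T} = \dual{\traceDD{T}(\TTheta)}{z}_{\partial T}$ but must separately match three intrinsic pieces on the right-hand side of \eqref{pf_cor_prop}. This is handled by the pointwise formulas \eqref{traces_tn_2d} and \eqref{jjump_node}, which pin down each piece independently for $\TTheta \in \cH(\div\Div\!,T)$, combined with a careful choice of test functions that localize to the interior of one edge or to a single vertex. Along the way one also checks that $\TTheta$ actually lies in $\cH(\div\Div\!,T)$ rather than merely $H(\div\Div\!,T)$, but this is ensured by the $L_2(E)$-regularity of the constant edge pieces of $\bq$, which embed continuously into $(H^{3/2}(E))'$ and $(H^{1/2}(E))'$.
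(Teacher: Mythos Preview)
Your proof is correct and follows essentially the same route as the paper: define the functional $\ell$ (your $\bq$) from the nine data, invoke the construction inside Lemma~\ref{la_tr_dd_norms} to obtain the unique $\TTheta$ with $\Grad\grad\div\Div\TTheta+\TTheta=0$ and $\traceDD{T}(\TTheta)=\ell$, and use the PDE constraint for injectivity. The only cosmetic difference is that the paper closes surjectivity with a dimension count ($\dim U_{\trddiv{T}}=9$ via the localization of \S\ref{sec_jump2_dd}) rather than by directly identifying the edge traces of the constructed $\TTheta$; both arguments rest on the same identification via suitable test functions, and the paper is no more explicit than you are about the passage from $H(\div\Div\!,T)$ to $\cH(\div\Div\!,T)$. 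Your caution about invoking \eqref{pf_cor_prop} before knowing $\TTheta\in\cH(\div\Div\!,T)$ is well placed; the clean way around it is to first use the operators \eqref{trace_n} and \eqref{trace_t}, which are defined on all of $H(\div\Div\!,T)$, restricted to test functions whose boundary data are supported in the interior of a single edge, and then read off that the edge pieces coincide with the constants $\alpha|_E,\beta|_E$.
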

\begin{proof}
  We prove that~\eqref{eq:dofDDlocal} define a unique functional $\ell(\cdot)$ 
  on $H^2(T)$ that vanishes for $z\in H_0^2(T)$. Then, the proof of Lemma~\ref{la_tr_dd_norms} shows that this
  functional can be uniquely identified with the trace of a function $\TTheta\in\cH(\div\Div\!,T)$
  with $\Grad\grad \div\Div\TTheta + \TTheta = 0$.
  Let $\alpha,\beta\in P^0(\cE_T)$ be the functions associated
  to~\eqref{eq:dofDDlocal_a} and \eqref{eq:dofDDlocal_b},
  that is, $\alpha|_E:=\alpha_E |E|^{-1}$ and $\beta|_E:=\beta_E|E|^{-1}$ ($E\in\cE_T$). 
  For $z\in H^2(T)$ we set
  \begin{align} \label{disc_duality}
    \ell(z) := \dual{\alpha}{z}_{\partial T} - \dual{\beta}{\nn\cdot\grad z}_{\partial T}
    - \sum_{e\in\cN_T} \gamma_e z(e)
  \end{align}
  with $\gamma_e$ ($e\in\cN_T$) as in \eqref{eq:dofDDlocal_c}.
  Note that $\ell(z) = 0$ if $z\in H_0^2(T)$ and that $\ell(\cdot)$ is indeed a bounded functional.
  By selecting appropriate test functions $z\in H^2(T)$ we obtain $\ell(\cdot)=\traceDD{T}(\TTheta)$ 
  Furthermore, we prove that
  \begin{align*}
    \ell(z) = 0 \quad\text{for all } z\in H^2(T) \Leftrightarrow (\alpha,\beta,\gamma) = 0.
  \end{align*}
  The direction ``$\Leftarrow$'' is trivial and ``$\Rightarrow$'' follows by selecting appropriate test functions.
  Note that $\ell(\cdot)=0$ implies $\traceDD{T}(\TTheta) = 0$ with $\Grad\grad\div\Div\TTheta + \TTheta=0$.
  Since $\norm{\TTheta}{\div\Div\!,T} = \norm{\traceDD{T}(\TTheta)}{-3/2,-1/2,\partial T}$ it follows $\TTheta=0$.

  Finally, to see the other direction, let $\TTheta\in U_{\trddiv T}$ be given.
  Note that $\TTheta$ only depends on its trace values and by the localization of traces from
  \S\ref{sec_jump2_dd} we conclude that $\dim(U_{\trddiv T}) = 9$
  which is the number of degrees of freedom~\eqref{eq:dofDDlocal} and, thus, finishes the proof.
\end{proof}

The corresponding global (volume) space is defined by
\begin{align*}
  U_{\trddiv\cT} := \{\TTheta\in \cH(\div\Div\!,\Omega);\; \TTheta|_{T}\in U_{\trddiv{T}}\ \forall T\in\cT\}
\end{align*}
with associated degrees of freedom
\begin{subequations}\label{eq:dofDDglobal}
\begin{align}
  \dual{\nn\cdot\Div\TTheta + \partial_\bt (\bt\cdot\TTheta\nn)}{1}_E
  &\quad (E\in\cE), \label{eq:dofDDglobal:a} \\
  \dual{\nn\cdot\TTheta\nn}{1}_E
  &\quad (E\in\cE), \label{eq:dofDDglobal:b}\\
  \jjump{\TTheta}_{\partial T}(e)
  &\quad (e\in \cN_T,\  T\in\cT),\label{eq:dofDDglobal:c} \\
  \text{subject to }
  \sum_{T\in \omega(e)} \jjump{\TTheta}_{\partial T}(e) = 0
  &\quad\forall e\in\cN_0.
  \label{eq:dofDDglobal:d}
\end{align}
\end{subequations}
Analogously to \eqref{eq:dofDDlocal},~\eqref{disc_duality}, these variables define a functional
acting on $z\in H^2(\cT)$.

\begin{lemma}\label{lem:dofDDglobal}
  The degrees of freedom~\eqref{eq:dofDDglobal} uniquely define an element in $U_{\trddiv\cT}$.
\end{lemma}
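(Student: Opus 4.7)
The plan is to combine Lemma~\ref{lem:dofDDlocal} with the conformity characterization of Proposition~\ref{prop_cor_dd_jump}. First, given an admissible assignment of the global degrees of freedom \eqref{eq:dofDDglobal:a}--\eqref{eq:dofDDglobal:c} subject to the constraint \eqref{eq:dofDDglobal:d}, I restrict them to each triangle $T\in\cT$: the moments on the three edges $E\in\cE_T$ (after fixing $\nn$ to be outward from $T$) together with the three nodal values $\gamma_e = \jjump{\TTheta}_{\partial T}(e)$ for $e\in\cN_T$ deliver precisely the nine local degrees of freedom appearing in \eqref{eq:dofDDlocal}. Lemma~\ref{lem:dofDDlocal} then produces a unique $\TTheta|_T\in U_{\trddiv T}$ on every element.

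Uniqueness of the resulting global object is immediate: if all global degrees of freedom in \eqref{eq:dofDDglobal} vanish, the nine local degrees of freedom on every $T$ vanish, hence $\TTheta|_T = 0$ by Lemma~\ref{lem:dofDDlocal}.

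The substantive step is to verify that the elementwise collection $\TTheta=(\TTheta|_T)_T$ actually lies in $U_{\trddiv\cT}$, i.e.\ is globally $\cH(\div\Div\!,\Omega)$-conforming. For this I invoke Proposition~\ref{prop_cor_dd_jump}. For an interior edge $E\in\cE_0$ shared by elements $T_1,T_2$, the local traces $\traceDD{T_i,E,\bt}(\TTheta)$ and $\traceDD{T_i,E,\nn}(\TTheta)$ lie in $P^0(E)$ by the definition \eqref{eq:defUddivT} of $U_{\trddiv T}$, hence are each determined by a single edge moment. Because \eqref{eq:dofDDglobal:a} and \eqref{eq:dofDDglobal:b} prescribe one value per edge with respect to a globally fixed orientation of $\nn$ on $E$ (opposite to one of the two outward normals), the outward-oriented traces seen from $T_1$ and $T_2$ are negatives of each other, yielding the first line of the conformity condition \eqref{cont}. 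For an interior node $e\in\cN_0$ and any $z\in H^2_0(\omega_e)$, the vanishing of $z$ at every vertex of each $T\in\omega(e)$ other than $e$ puts us in the scope of \eqref{jjump_loc}, and formula \eqref{jjump_node} from Remark~\ref{rem:traceOp2D} simplifies each contribution to $\jjump{\TTheta}_{\partial T}(z) = \jjump{\TTheta}_{\partial T}(e)\,z(e)$, so that
\begin{equation*}
   \sum_{T\in\omega(e)}\jjump{\TTheta}_{\partial T}(z)
   = z(e)\sum_{T\in\omega(e)}\jjump{\TTheta}_{\partial T}(e) = 0
\end{equation*}
by the constraint \eqref{eq:dofDDglobal:d}. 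Both conditions in \eqref{cont} are therefore met, Proposition~\ref{prop_cor_dd_jump} gives $\TTheta\in H(\div\Div\!,\Omega)$, and since $\TTheta|_T\in\cH(\div\Div\!,T)$ element by element, we conclude $\TTheta\in U_{\trddiv\cT}$.

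The main obstacle I anticipate is the bookkeeping around the orientation of $\nn$ across interior edges, which is what turns a single scalar DOF per edge into the \emph{sum-to-zero} statement required by Proposition~\ref{prop_cor_dd_jump}; a secondary point to check carefully is that for $z\in H^2_0(\omega_e)$ the hypothesis of \eqref{jjump_loc} really holds for every $T\in\omega(e)$, so that the reduction to a pure point evaluation at $e$ is valid. Once these orientation and localization issues are settled, the rest of the argument is a direct translation between the local and global DOF lists.
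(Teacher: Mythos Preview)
Your argument is correct and follows precisely the paper's approach: invoke Lemma~\ref{lem:dofDDlocal} to get a unique element of $H(\div\Div\!,\cT)$ from the local degrees of freedom \eqref{eq:dofDDglobal:a}--\eqref{eq:dofDDglobal:c}, and then use Proposition~\ref{prop_cor_dd_jump} together with the constraint \eqref{eq:dofDDglobal:d} to conclude global conformity. You have simply written out in detail the orientation and localization checks that the paper leaves implicit.
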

\begin{proof}
  Note that by Lemma~\ref{lem:dofDDlocal},~\eqref{eq:dofDDglobal:a}--\eqref{eq:dofDDglobal:c}
  defines a unique function $\TTheta\in H(\div\Div\!,\cT)$.
  Proposition~\ref{prop_cor_dd_jump} and~\eqref{eq:dofDDglobal:d} conclude the proof.
\end{proof}
Summing up, $U_{\trddiv\cT}$ has $\#\cE+\#\cE+3\#\cT-\#\cN_0$ degrees of freedom.
In the implementation we take care of the constraints~\eqref{eq:dofDDglobal:d}
by using Lagrange multipliers. Now, for the approximation of
$\tQ\in\bH^{-3/2,-1/2}(\cS)$, we use the discrete space
\begin{align*}
  \widehat Q_{\cS} := \traceDD{} (U_{\trddiv\cT}).
\end{align*}
By Proposition~\ref{prop_dd_trace} there is an isomorphism between the volume space $U_{\trddiv\cT}$ and its
trace $\widehat Q_{\cS}$ (note the PDE-constraint in \eqref{eq:defUddivT}).
Therefore, the trace space $\widehat Q_{\cS}$ has the same degrees of freedom \eqref{eq:dofDDglobal}.

\begin{lemma}\label{lem:approxTraceDD}
  Let $u\in H^4(\Omega)$  and set
  $\tQ:=\traceDD{}(\cC\Grad\grad u)$. Then, 
  \begin{align*}
    \min_{\tQ_h\in \widehat Q_{\cS}} \norm{\tQ-\tQ_h}{-3/2,-1/2,\cS} \leq C h \norm{u}{H^4(\Omega)}
  \end{align*}
  where the generic constant $C>0$ only depends on the shape-regularity constant
  $C_\mathrm{shape}$ of $\cT$, and $\cC$.
\end{lemma}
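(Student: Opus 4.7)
The plan is to use Proposition~\ref{prop_dd_trace} to replace the dual norm $\|\cdot\|_{-3/2,-1/2,\cS}$ by the minimum-energy extension norm $\|\cdot\|_{\trddiv\cS}$, which (via Proposition~\ref{prop_dd_jump}(ii) and Lemma~\ref{la_tr_dd_norms}) localizes into a sum of element contributions, each bounded by an $H(\div\Div,T)$-norm of an arbitrary extension. Since $u\in H^4(\Omega)$ yields $\cC\Grad\grad u\in H(\div\Div,\Omega)\cap\cH(\div\Div,\cT)$ with enough regularity to evaluate the degrees of freedom~\eqref{eq:dofDDglobal}, and since Proposition~\ref{prop_cor_dd_jump} guarantees the jump-compatibility constraint~\eqref{eq:dofDDglobal:d} automatically (as $\cC\Grad\grad u\in H(\div\Div,\Omega)$), I would define $\TTheta_h\in U_{\trddiv\cT}$ as the element whose degrees of freedom match those of $\cC\Grad\grad u$, and set $\tQ_h:=\traceDD{}(\TTheta_h)\in\widehat Q_{\cS}$.

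Combining the above, for any choice of $\PPhi_T\in H(\div\Div,T)$ with $\traceDD{T}(\PPhi_T)=\tQ_h|_{\partial T}$, I would estimate
\[
\|\tQ-\tQ_h\|_{-3/2,-1/2,\cS}^2
= \sum_{T\in\cT}\|\tQ-\tQ_h\|_{\trddiv\partial T}^2
\le \sum_{T\in\cT}\|\cC\Grad\grad u - \PPhi_T\|_{\div\Div,T}^2.
\]
The key point is that $\PPhi_T$ need not coincide with $\TTheta_h|_T$; it may be chosen as a convenient low-order polynomial tensor. My intended choice is a polynomial in $\PP^1(T)\cap\LL_2^s(T)$ (nine-dimensional, matching the nine degrees of freedom~\eqref{eq:dofDDlocal}), or in $\PP^2(T)\cap\LL_2^s(T)$ if unisolvence on $\PP^1$ fails. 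Since $\PPhi_T$ and $\TTheta_h|_T$ share the degrees of freedom of $\cC\Grad\grad u|_T$, the construction $\cC\Grad\grad u|_T\mapsto\PPhi_T$ reproduces polynomial tensors up to degree one. Scaling to a reference triangle (exploiting shape-regularity) and a standard Bramble--Hilbert argument then yield $\|\cC\Grad\grad u - \PPhi_T\|_{\div\Div,T} \lesssim h_T\,|\cC\Grad\grad u|_{H^2(T)} \lesssim h_T\,\|u\|_{H^4(T)}$, and summation over $T$ with $h_T\le h$ gives the stated estimate.

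The main obstacle is the polynomial-extension step. Because $U_{\trddiv T}$ is the kernel of an elliptic operator and contains no nonzero polynomial tensor, the natural element $\TTheta_h|_T$ is not a polynomial and cannot be used directly in a Bramble--Hilbert argument. One is therefore forced to work with a separate low-order polynomial space, and the technical core becomes verifying unisolvence of the nine degrees of freedom~\eqref{eq:dofDDlocal} on $\PP^1(T)\cap\LL_2^s(T)$ (or a related nine-dimensional symmetric-tensor polynomial space). Should unisolvence fail, a robust alternative is to decompose $\cC\Grad\grad u - \TTheta_h$ into a polynomial approximation of $\cC\Grad\grad u$ plus a trace-correcting lift from $\widehat Q_\cS$ (using that $\traceDD{T}$ is a surjection with bounded right-inverse on the discrete local trace space, by Lemma~\ref{lem:dofDDlocal}), with each piece estimated by scaling on the reference element.
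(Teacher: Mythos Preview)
Your strategy has a genuine gap in the Bramble--Hilbert step. The norm you want to estimate is $\|\cC\Grad\grad u - \PPhi_T\|_{\div\Div,T}$, which contains the second-order term $\|\div\Div(\cC\Grad\grad u - \PPhi_T)\|_T$. For any $\PPhi_T\in\PP^1(T)\cap\LL_2^s(T)$ one has $\div\Div\PPhi_T=0$, so this term equals $\|\div\Div\cC\Grad\grad u\|_T$, which does not tend to zero with $h$. A $\PP^1$-exact interpolant therefore yields no convergence in the $\div\Div$ part; to get $O(h)$ there you would need $\PP^2$-exactness, but $\PP^2(T)\cap\LL_2^s(T)$ is $18$-dimensional and the nine functionals~\eqref{eq:dofDDlocal} cannot be unisolvent on it. Your fallback of ``polynomial approximation plus trace-correcting lift'' does not resolve this: the lift would itself have to be controlled in $\|\cdot\|_{\div\Div,T}$, and scaling such a lift to the reference element does not produce a factor of $h$ in the second-derivative part.

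The paper avoids volume extensions altogether. It works directly with the duality definition of $\|\cdot\|_{-3/2,-1/2,\partial T}$: for the optimal test function $z$ one evaluates $\dual{\traceDD{T}(\widetilde\TTheta)}{z}_{\partial T}$ as a boundary pairing, integrates by parts \emph{along each edge}, and uses edge-wise $L_2$ projections plus trace inequalities. A second subtlety you miss is that the paper does \emph{not} take $\tQ_h$ to be the nodal interpolant matching the degrees of freedom of $\cC\Grad\grad u$; the vertex values $\gamma_e$ are deliberately shifted by $\jjump{g-\Pi^1 g}_{\partial T}(e)$ (where $g$ is an edge-wise antiderivative of $\traceDD{T,E,\bt}(\TTheta)$) precisely so that the boundary integration by parts produces the clean expression~\eqref{pf_la_approxTraceDD} with no leftover point terms. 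Without this modification the point-value contributions would not obviously be $O(h)$.
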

\begin{proof}
  Set $\TTheta := \cC\Grad\grad u$.
  We start with defining an element $\tQ_h \in \widehat Q_\cS$.
  Let $T\in\cT$ be given and let $\Pi^p:\,L_2(\partial T)\to P^p(\cE_T)$ denote the $L_2$-projection.
  Below, this projection operator will also be used component-wise for vector functions.
  For $E\in\cE_T$ we set (cf.~\eqref{traces_tn})
  $\phi_E := \traceDD{T,E,\bt}(\TTheta)
           = \bigl(\nn\cdot\Div\TTheta|_T + \partial_\bt(\bt\cdot\TTheta|_T\nn)\bigr)|_E$,
  $\psi_E:= \traceDD{T,E,\nn}(\TTheta)
           =\bigl(\nn\cdot\TTheta|_T\nn\bigr)|_E$
  and define $\phi,\psi\in L_2(\partial T)$ by $\phi|_E:=\phi_E$ and $\psi|_E:=\psi_E$ for $E\in\cE_T$.

  By the regularity assumption we even have $\phi_E\in H^1(T)|_E$.
  Thus, there exists (a more regular) antiderivative $g_E$, that is,
  $\partial_\bt g_E = \phi_E$, and it satisfies
  \begin{align*}
    \dual{\phi_E}{z}_E = -\dual{g_E}{\partial_\bt z}_E + g_E(e_+)z(e_+) - g_E(e_-)z(e_-),
  \end{align*}
  where $E$ is the edge with vertices $e_\pm$.
  Define $g\in L_2(\partial T)$ by $g|_E = g_E$ with jumps
  $\jjump{g}_{\partial T}(e) := g|_{E_2}(e)-g|_{E_1}(e)$ for $e\in\cN_T$.
  Here, $E_1,E_2\in \cE_T$ are the unique edges with $\overline E_1\cap\overline E_2 = \{e\}$
  and the sign is chosen to
  be consistent with the definition of $\jjump{\TTheta}_{\partial T}(e)$, cf.~\eqref{jjump_node}.
  We set $\gamma_e := \jjump{\TTheta}_{\partial T}(e) - \jjump{g-\Pi^1g}_{\partial T}(e)$ for $e\in\cN_T$.
  Prescribing the values of the degrees of freedom~\eqref{eq:dofDDlocal} as
  $\partial_{\bt,\cE_T} (\Pi^1 g) \in P^0(\cE_T)$, $\Pi^0\psi\in P^0(\cE_T)$, 
  and $(\gamma_e)_{e\in\cN_T}\in\R^3$, this defines a unique element of $U_{\trddiv{T}}$.
  Doing this for all elements $T\in\cT$ we obtain a unique element of $U_{\trddiv{\cT}}$ since
  \begin{align*}
    \sum_{T\in\omega(e)} \jjump{\TTheta}_{\partial T}(e) - \jjump{g-\Pi^1g}_{\partial T}(e) = 0
    \quad\forall e\in\cN_0.
  \end{align*}
  This also defines an element in $\widehat Q_\cS$ which we denote by $\tQ_h$.

  To analyze the convergence order it suffices to do so for one element $T\in\cT$.
  Let $\widetilde\TTheta\in \cH(\div\Div\!,T)$ be the unique element with
  $\traceDD{T}(\widetilde\TTheta) = (\tQ-\tQ_h)|_{\partial T}$ and
  $\Grad\grad\div\Div\widetilde\TTheta + \widetilde\TTheta = 0$ on $T$. 
  The proof of Lemma~\ref{la_tr_dd_norms} shows that
  \begin{align*}
    \norm{(\tQ-\tQ_h)|_{\partial T}}{-3/2,-1/2,\partial T}^2
    = \norm{\widetilde\TTheta}{T}^2+\norm{\div\Div\widetilde\TTheta}T^2
    = \dual{\traceDD{T}(\widetilde\TTheta)}z_{\partial T}
  \end{align*}
  where $z := -\div\Div\widetilde \TTheta \in H^2(T)$ and $\norm{z}{2,T}^2 =
  \norm{\widetilde\TTheta}{T}^2+\norm{\div\Div\widetilde\TTheta}T^2$.
  Note that (cf.~\eqref{pf_cor_prop} and Remark~\ref{rem:traceOp2D})
  \begin{align} \label{pf_la_approxTraceDD}
    \dual{\traceDD{T}(\widetilde\TTheta)}{z}_{\partial T}
    = \dual{\partial_{\bt,\cE_T} (1-\Pi^1)g}z_{\partial T}
    - \sum_{e\in\cN_T} \jjump{g-\Pi^1g}_{\partial T}(e)z(e)
    - \dual{(1-\Pi^0)\psi}{\nn\cdot\grad z}_{\partial T}
  \end{align}
  by the definition of $\gamma_e$. The last term in \eqref{pf_la_approxTraceDD} is estimated by
  \begin{align*}
    |\dual{(1-\Pi^0)\psi}{\nn\cdot\grad z}_{\partial T}| 
    &= |\dual{(1-\Pi^0)\psi}{(1-\Pi^0)\nn\cdot\grad z}_{\partial T}|
    \le \norm{(1-\Pi^0)\psi}{\partial T}\norm{(1-\Pi^0)\grad z}{\partial T} \\
    &\lesssim h \norm{u}{H^3(T)} \norm{\Grad\grad z}{T}
    \leq h \norm{u}{H^3(T)} \norm{\widetilde\TTheta}{\div\Div\!,T}.
  \end{align*}
  Here we have used the trace inequality $\norm{(1-\Pi^0)v|_E}{E} \lesssim h^{1/2}\norm{\grad v}{T}$
  for $E\in\cE_T$ and $v\in H^1(T)$. The involved constants only depend on the shape-regularity of $\cT$.
  To bound the two remaining terms in \eqref{pf_la_approxTraceDD} we integrate by parts,
  use properties of the $L_2$ projection $\Pi^1$ and the trace inequality
  $\norm{(1-\Pi^1)\grad z}{\partial T}\lesssim h^{1/2}\norm{\Grad\grad z}{T}$. This yields
  \begin{align*}
  \lefteqn{
    \dual{\partial_{\bt,\cE_T} (1-\Pi^1)g}z_{\partial T}
    - \sum_{e\in\cN_T} \jjump{g-\Pi^1g}_{\partial T}(e)z(e)
    =
    -\dual{(1-\Pi^1)g}{\partial_\bt z}_{\partial T}
  }\\
    &=
    -\dual{(1-\Pi^1)g}{(1-\Pi^1)\partial_\bt z}_{\partial T}
    \le \norm{(1-\Pi^1)g}{\partial T}\norm{(1-\Pi^1)\grad z}{\partial T}
    \lesssim
    h^{3/2}\norm{\partial_{\bt,\cE_T} g}{\partial T} \norm{\Grad\grad z}{T}.
  \end{align*}
  With the trace inequality $\norm{v|_E}{E} \lesssim h^{-1/2}\norm{v}{H^1(T)}$ for $E\in\cE_T$ and
  $v\in H^1(T)$ we have that
  \[
     h\norm{\partial_{\bt,\cE_T} g}{\partial T}^2
     =
     h\norm{\phi}{\partial T}^2
     =
     h \sum_{E\in\cE_T} \|\traceDD{T,E,\bt}(\TTheta)\|_E^2
     \lesssim
     \norm{u}{H^4(T)}.
  \]
  Therefore, eventually we obtain the desired bound for the remaining terms in \eqref{pf_la_approxTraceDD},
  \begin{align*}
    \Bigl\lvert\dual{\partial_{\bt,\cE_T} (1-\Pi^1)g}z_{\partial T}
               - \sum_{e\in\cN_T} \jjump{g-\Pi^1g}_{\partial T}(e)z(e)
    \Bigr\rvert 
    \lesssim
    h \norm{u}{H^4(T)}\norm{z}{2,T}.
  \end{align*}
  Altogether we have thus shown that
  \begin{align*}
    \norm{(\tQ-\tQ_h)|_{\partial T}}{-3/2,-1/2,\partial T}^2 = \norm{\widetilde\TTheta}{\div\Div\!,T}^2
    \lesssim h \norm{u}{H^4(T)} \norm{\widetilde\TTheta}{\div\Div\!,T},
  \end{align*}
  that is,
  $\norm{(\tQ-\tQ_h)|_{\partial T}}{-3/2,-1/2,\partial T}^2 \lesssim h^2 \norm{u}{H^4(T)}^2$.
  Summation over all $T\in\cT$ finishes the proof.
\end{proof}

Our final discrete subspace of $\UU$ for the DPG approximation is
\begin{align*}
  \UU_h := P^0(\cT) \times (\PP^0(\cT)\cap \LL_2^s(\Omega)) \times
  \widehat U_\cS \times \widehat Q_\cS.
\end{align*}
By standard results on the approximation properties of $P^0(T)$ in $L_2(T)$, Remark~\ref{rem:approxTraceGG} 
and Lemma~\ref{lem:approxTraceDD} we get the following result:
\begin{theorem}\label{thm:approxU}
  Let $u\in H^4(\Omega)$ and set
  $\uu := (u,\cC\Grad\grad u, \traceGG{}(u), \traceDD{}(\cC\Grad\grad u))\in \UU$.
  Then, it holds
  \begin{align*}
    \min_{\bw_h\in\UU_{h}}\norm{\uu-\bw_h}\UU \leq C h \norm{u}{H^4(\Omega)}.
  \end{align*}
  The constant $C>0$ depends on the shape-regularity of $\cT$ and $\cC$,
  but is otherwise independent of $\cT$.\qed
\end{theorem}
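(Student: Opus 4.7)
The proof is essentially a direct consequence of the product structure of $\UU$ and of the four component approximation results already in place. The plan is as follows.

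First, I would observe that the norm $\|\cdot\|_\UU$ is a direct-sum norm:
\[
   \|\uu-\bw_h\|_\UU^2
   = \|u-u_h\|^2 + \|\cC\Grad\grad u-\MM_h\|^2
     + \|\traceGG{}(u)-\tu_h\|_{3/2,1/2,00,\cS}^2
     + \|\traceDD{}(\cC\Grad\grad u)-\tQ_h\|_{-3/2,-1/2,\cS}^2.
\]
Since $\UU_h$ has product form, the four minimizations decouple, and it suffices to bound each term separately by $C h \|u\|_{H^4(\Omega)}$.

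Next I would handle the two field components by standard $L_2$ projection estimates on shape-regular meshes. For $u\in H^4(\Omega)\subset H^1(\Omega)$, piecewise-constant approximation gives
$\min_{u_h\in P^0(\cT)} \|u-u_h\| \lesssim h\,\|u\|_{H^1(\Omega)} \le h\,\|u\|_{H^4(\Omega)}$, and analogously for the symmetric tensor $\cC\Grad\grad u\in \bH^1(\Omega)$ approximated in $\PP^0(\cT)\cap \LL_2^s(\Omega)$, giving the bound $h\,\|\cC\Grad\grad u\|_{H^1(\Omega)} \lesssim h\,\|u\|_{H^3(\Omega)}$. The constant depends only on $\cC$ and the shape-regularity constant $C_{\mathrm{shape}}$.

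Finally, the two skeleton components are covered directly by results already established in this section. The term involving $\traceGG{}(u)$ is bounded by $Ch\,|u|_{H^3(\Omega)}$ via Remark~\ref{rem:approxTraceGG}, and the term involving $\traceDD{}(\cC\Grad\grad u)$ is bounded by $Ch\,\|u\|_{H^4(\Omega)}$ via Lemma~\ref{lem:approxTraceDD}. Summing the four contributions yields the claim. There is no real obstacle here: all heavy lifting (in particular the nontrivial construction of a suitable element of $\widehat Q_\cS$ and the careful manipulation of tangential antiderivatives together with the jump corrections $\gamma_e$) has already been carried out in the proof of Lemma~\ref{lem:approxTraceDD}; the present theorem is a clean assembly of those ingredients.
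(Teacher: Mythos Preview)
Your proposal is correct and matches the paper's approach exactly: the paper states the theorem as an immediate consequence of standard $P^0$ approximation in $L_2$, Remark~\ref{rem:approxTraceGG}, and Lemma~\ref{lem:approxTraceDD}, with no further proof given. Your componentwise decomposition and the four bounds you invoke are precisely the intended argument.
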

\begin{remark}
  Let us note that the regularity assumption $u\in H^4(\Omega)$ in Theorem~\ref{thm:approxU} may be reduced
  to $u\in H^3(\Omega)$ subject to $\div\Div\cC\Grad\grad u\in L_2(\Omega)$
  with a refined analysis of Lemma~\ref{lem:approxTraceDD}.
  Such a reduction in the regularity assumption was observed in the recent work~\cite{Fuehrer_18_SDM} for 
  ultra-weak formulations of second order elliptic problems.
\end{remark}

Since the optimal test functions cannot be computed exactly, we approximate them in the enlarged space
\begin{align*}
  \VV_h = P^3(\cT) \times (\PP^2(\cT)\cap \LL_2^s(\Omega)) \subset \VV.
\end{align*}
That is, we replace $\ttt:\;\UU\to\VV$ by $\ttt_h:\;\UU\to\VV_h$, which is defined by
\begin{align*}
  \ip{\ttt_h\uu}{\vv}_\VV = b(\uu,\vv) \quad\forall \vv\in\VV_h.
\end{align*}
Particularly, the space of approximated discrete optimal test functions is given by
$\ttt_h(\UU_h)\subseteq \VV_h$.

\subsection{Examples} \label{sec_num_ex}
In the following two examples, refinements are obtained by
using the newest vertex bisection (NVB). It maintains shape-regularity of the triangulation, i.e.,
\begin{align*}
  \sup_{T\in\cT} \frac{\diam(T)^2}{|T|} \leq C \sup_{T\in\cT_0} \frac{\diam(T)^2}{|T|}
\end{align*}
where $C>0$ is independent of $\cT$, and $\cT$ is an arbitrary refinement of the initial mesh $\cT_0$.
Uniform refinement means that each triangle is divided into four son triangles with the same area, i.e., 
it corresponds to two bisections of the father element.
In the second example we use a simple adaptive loop of the form
\begin{align*}
  \boxed{\texttt{SOLVE}} \quad\longrightarrow\quad
  \boxed{\texttt{ESTIMATE}} \quad\longrightarrow\quad
  \boxed{\texttt{MARK}} \quad\longrightarrow\quad
  \boxed{\texttt{REFINE}} \quad.
\end{align*}
The estimation step is done with the error estimator that is automatically provided by the DPG method, 
$\eta := \norm{B(\uu-\uu_h)}{\VV_h'}$. 
We refer to~\cite{CarstensenDG_14_PEC} for an abstract analysis of the DPG error estimator.
Let us note that $\eta$ can be written as the sum of local contributions
\begin{align*}
  \eta^2 = \sum_{T\in\cT} \eta(T)^2.
\end{align*}
The marking step is done using the bulk criterion ($\theta \in (0,1)$)
\begin{align*}
  \theta \eta^2 \leq \sum_{T\in\mathcal{M}} \eta(T)^2
\end{align*}
where $\mathcal{M}\subseteq\cT$ is the set of marked elements. It is the set of (up to a constant) minimal
cardinality that satisfies the above relation.
In \S\ref{sec:Zshape} we use the parameter $\theta = \tfrac12$.

\subsubsection{Square domain}\label{sec:quad}
Let $\Omega = (0,1)^2$. We use the constant load $f=1$, the identity $\cC=I$, and the boundary conditions
\begin{align*}
  u|_{\partial \Omega} = 0, \quad \nn\cdot\MM\nn|_{\partial \Omega} = 0.
\end{align*}
It is known that the exact solution can be expressed by the double Fourier series
\begin{align*}
  u(x,y) = \frac{16}{\pi^2} 
  \sum_{n=0}^\infty \sum_{m=0}^\infty \frac{\sin((2n+1)\pi x)\sin((2m+1)\pi y)}{(2n+1)(2m+1)((2n+1)^2+(2m+1)^2)^2}.
\end{align*}
\begin{figure}[htb]
  \begin{center}
    \includegraphics{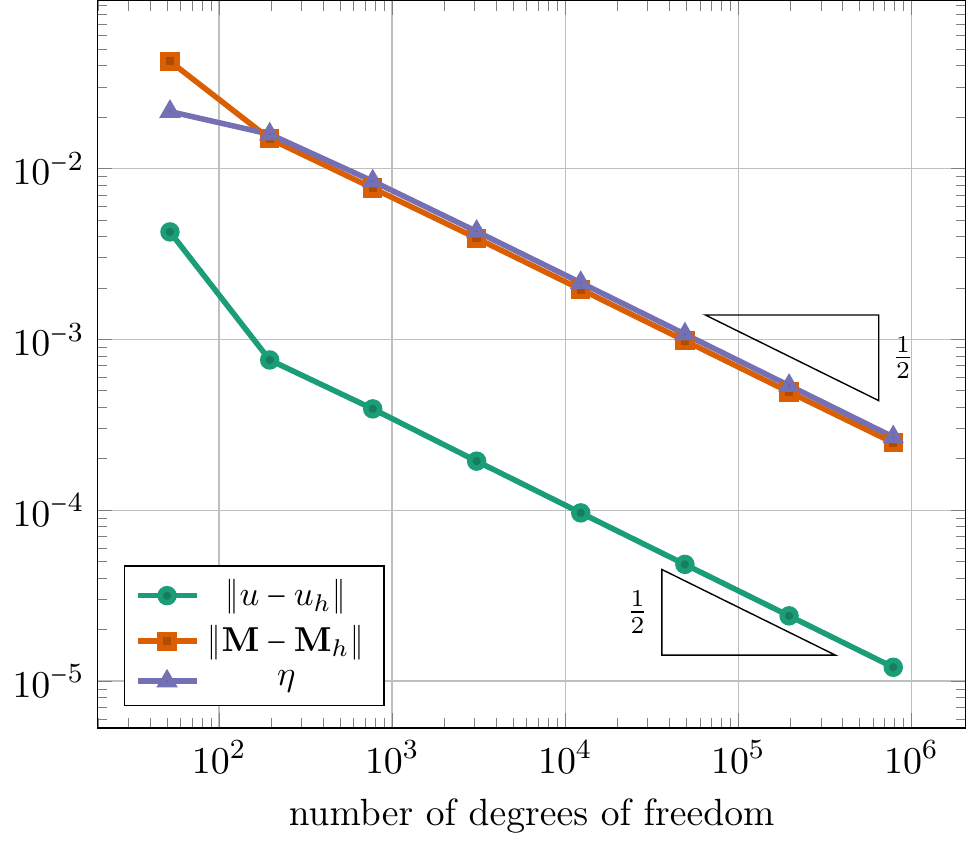}
  \end{center}
  \caption{$L_2$ error for the field variables $u$, $\MM$, and the DPG error estimator
           $\eta$ with respect to the degrees of freedom (\S\ref{sec:quad}).}
  \label{fig:quadRates}
\end{figure}
In particular, the solution is smooth and we therefore expect a convergence of order $\OO(h)$.
In order to compute the $L_2(\Omega)$ errors $\norm{u-u_h}{}$ and $\norm{\MM-\MM_h}{}$ we replace the Fourier series by
finite sums, 
\begin{align*}
  u(x,y) \approx \frac{16}{\pi^2} 
  \sum_{n=0}^{15} \sum_{m=0}^{15} \frac{\sin((2n+1)\pi x)\sin((2m+1)\pi y)}{(2n+1)(2m+1)((2n+1)^2+(2m+1)^2)^2}.
\end{align*}
Figure~\ref{fig:quadRates} shows the convergence behavior of the $L_2$ errors and the DPG error estimator $\eta$ with
respect to the number of degrees of freedom ($=\dim(\UU_\cT)$) for a sequence of uniformly refined meshes.
The number $\alpha>0$ besides the triangle in the plots indicates its negative slope, i.e.,
the hypotenuse is parallel to $\dim(\UU_\cT)^{-\alpha}$.
We observe that all the plotted quantities have the same order of convergence $\alpha = 1/2$. 
Note that by \S\ref{sec:discretespaces} we have $\dim(\UU_\cT) \simeq \#\cT \simeq h^{-2}$.
Hence, we see the optimal convergence behavior $\OO(h)$ as stated in Theorem~\ref{thm:approxU}.

\subsubsection{Domain with reentrant corner}\label{sec:Zshape}
We consider the non-convex domain with reentrant corner at $(x,y)=(0,0)$ visualized in Figure~\ref{fig:ZshapeMeshes}
with angle $\tfrac3{4}\pi$ between the two edges that meet at $(x,y)=(0,0)$.
We use the singularity function
\begin{align*}
  u(r,\varphi) = r^{1+\alpha}(\cos( (\alpha+1)\varphi)+C \cos( (\alpha-1)\varphi))
\end{align*}
with polar coordinates $(r,\varphi)$ centered at the origin. A straightforward calculation yields
\begin{align*}
  \div\Div\Grad\grad u = 0 =: f.
\end{align*}
For the boundary conditions we prescribe the values of $u|_\Gamma$ and $\nabla u|_\Gamma$.
The parameters $\alpha$ and $C$ are chosen such that $u$ and its normal derivative vanish on the boundary edges
that meet at the origin.
Here, we have $\alpha\approx 0.673583432147380$ and $C\approx 1.234587795273723$.
Note that $u\in H^{2+\alpha-\varepsilon}(\Omega)$ and, selecting $\cC=I$, 
$\MM = \Grad\grad u \in (H^{\alpha-\varepsilon}(\Omega))^{2\times 2}$ for $\varepsilon>0$.
Furthermore, one verifies that $|\Div\MM(r,\varphi)|\simeq r^{\alpha-2}\not\in L_2(\Omega)$.
Therefore, $\MM\in H(\div\Div\!,\Omega)$ and $\MM\notin \bH(\Div\!,\Omega)$
(recall our discussion in Remark~\ref{rem_traceDD}).
However, $\MM\in\cH(\div\Div\!,\Omega)$ as can be seen as follows.
Let $E$ denote one of the boundary edges with endpoint $(0,0)$. Then,
$\nn\cdot\MM\nn|_E \simeq r^{\alpha-1} \in L_2(E)$ and $\bt\cdot\MM\nn|_E = 0$.
Moreover, $\nn\cdot\Div\MM|_E \simeq r^{\alpha-2} \in (H^1(E))'$. To see the last claim we note
that $r^{\alpha-2}\simeq (r^{\alpha-1})'$, 
where $(\cdot)'$ denotes the generalized derivative operator.
In particular, the latter is bounded as a mapping from $L_2(E)\to (H^1(E))'$, 
see, e.g.,~\cite[Proof of Lemma~3.5]{StephanS_91_hpB} and therefore,
$\norm{\nn\cdot\Div\MM}{(H^{1}(E))'}\lesssim \norm{r^{\alpha-1}}{L_2(E)}<\infty$.

\begin{figure}[htb]
  \begin{center}
    \includegraphics{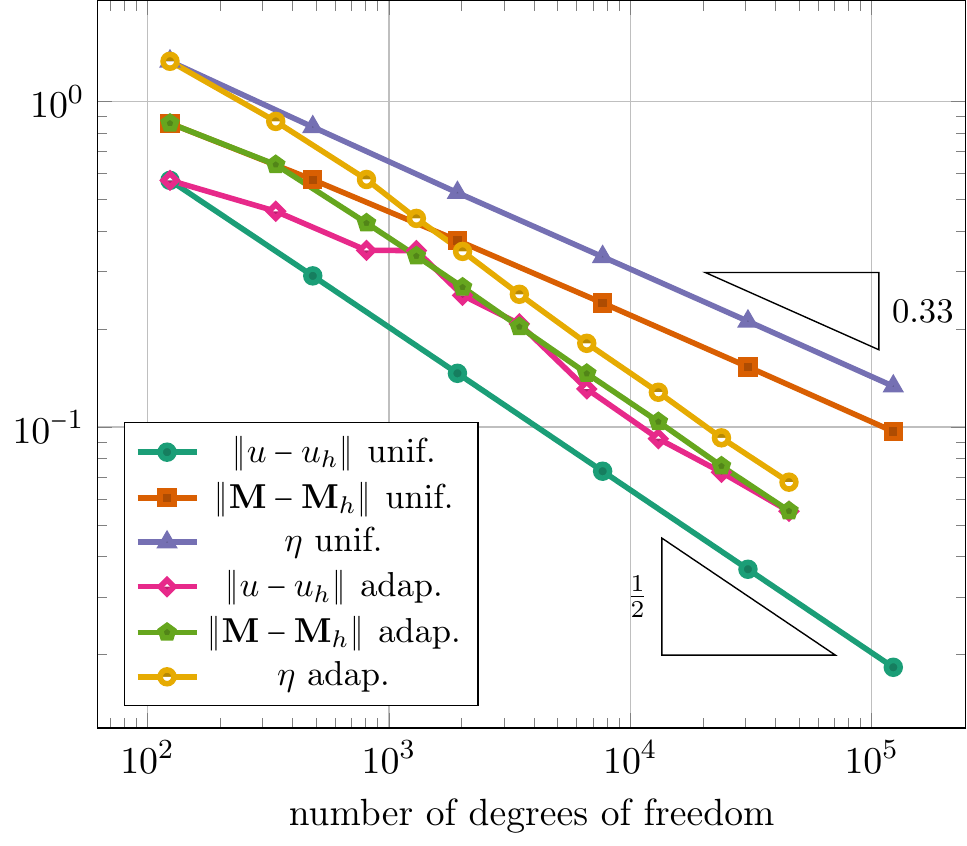}
  \end{center}
  \caption{Convergence rates of the DPG error estimator for uniformly and adaptively refined meshes
    (\S\ref{sec:Zshape}).}
  \label{fig:ZshapeRates}
\end{figure}

Due to the reduced regularity of $\MM$, uniform mesh refinements will lead to a suboptimal convergence order
$\OO(h^\alpha) = \OO(\dim(\UU_h)^{-\alpha/2})$.
In Figure~\ref{fig:ZshapeRates} we plot the DPG error estimator $\eta$ and the $L_2$ errors of the field variables 
in the case of uniform and adaptive mesh refinements.
We observe that uniform refinements lead indeed to a suboptimal convergence rate whereas with our adaptive algorithm the
optimal rates $\OO(\dim(\UU_h)^{-1/2})$ are restored for the error estimator and $\norm{\MM-\MM_h}{}$.

Figure~\ref{fig:ZshapeMeshes} shows meshes obtained from the adaptive algorithm in the iterations $j=1,2,3,4$.
We observe a strong refinement towards the reentrant corner where the (higher order) derivatives of $u$ are singular.

\begin{figure}[htb]
  \begin{center}
    \includegraphics{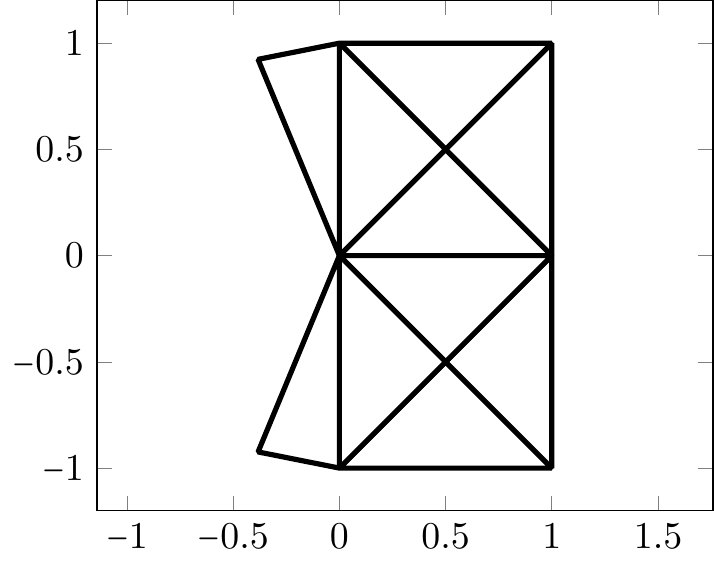}
    \includegraphics{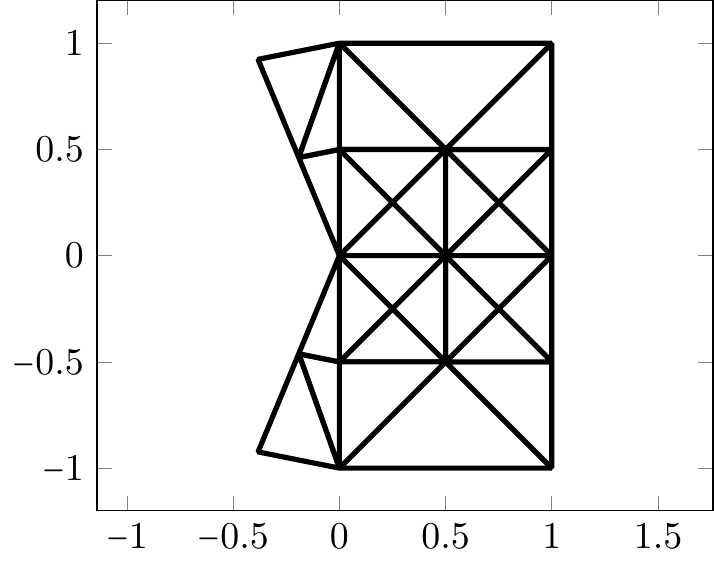}
    \includegraphics{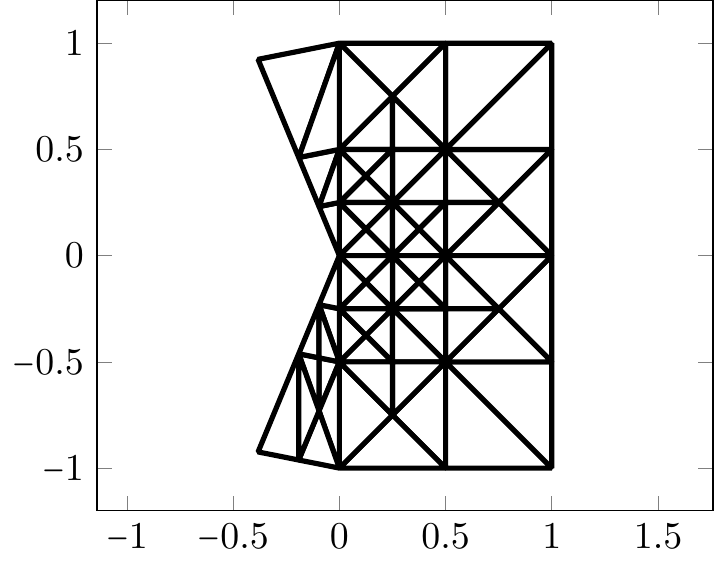}
    \includegraphics{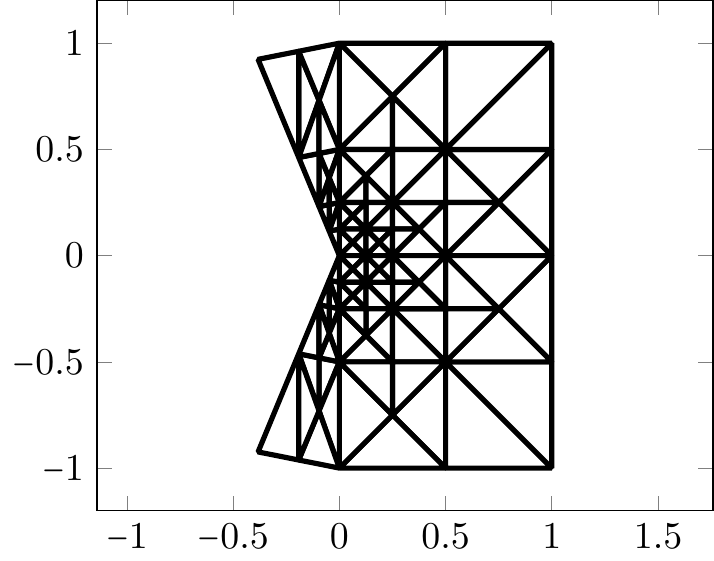}
  \end{center}
  \caption{Meshes obtained from the adaptive algorithm (iterations $j=1,2,3,4$) 
  with $\#\cT = 10, 28, 67, 108$ elements.}
  \label{fig:ZshapeMeshes}
\end{figure}

\bibliographystyle{siam}
\bibliography{/home/norbert/tex/bib/heuer,/home/norbert/tex/bib/bib}
\end{document}